\newtheorem{theorem}{Theorem}[section]
\newtheorem{lemma}[theorem]{Lemma}
\newtheorem{proposition}[theorem]{Proposition}
\newtheorem{corollary}[theorem]{Corollary}
\newtheorem{exAux}[theorem]{Example}
\newtheorem{Def}[theorem]{Definition}
\newenvironment{definition}{\begin{Def} \rm}{\end{Def}}
\newtheorem{Note}[theorem]{Note}
\newenvironment{note}{\begin{Note} \rm}{\end{Note}}
\newtheorem{Problem}[theorem]{Problem}
\newtheorem{Rem}[theorem]{Remark}
\newtheorem{Not}[theorem]{Notation}
\newtheorem{Conj}[theorem]{Conjecture}
\newtheorem{Ass}[theorem]{Assumption}
\newenvironment{proof}{\medskip\noindent{\bf Proof.\ }}{\qed\medskip}
\newenvironment{proofof}[1]{\medskip\noindent{\bf Proof  of {#1}.\ 
}}{\qed\medskip}
\newcommand{\qed}{\hfill\mbox{$\Box$\qquad\qquad}}
\newcommand{\F}{\mathbb{F}}
\newcommand{\vphi}{\varphi}
\renewcommand{\th}{\theta}
\newcommand{\vth}{\vartheta}
\newcommand{\tth}{\tilde{\theta}}
\newcommand{\tvphi}{\tilde{\varphi}}
\newcommand{\tphi}{\tilde{\phi}}
\newcommand{\tq}{\tilde{q}}
\newcommand{\tvth}{\tilde{\vartheta}}
\newif\ifDRAFT
\begin{document}

\begin{center}
\LARGE\bf
The end-parameters of a Leonard pair
\end{center}

\begin{center}
\Large
Kazumasa Nomura
\end{center}

\bigskip

{\small
\begin{quote}
\begin{center}
{\bf Abstract}
\end{center}
Fix an algebraically closed field $\F$ and an integer $d \geq 3$.
Let $V$ be a vector space over $\F$ with dimension $d+1$.
A Leonard pair on $V$ is a pair of diagonalizable linear transformations
$A: V \to V$ and $A^* : V \to V$,
each acting in an irreducible tridiagonal fashion on an eigenbasis
for the other one.
There is an object related to a Leonard pair called a Leonard system.
It is known that a Leonard system is determined up to isomorphism
by a sequence of scalars
$(\{\th_i\}_{i=0}^d, \{\th^*_i\}_{i=0}^d, \{\vphi_i\}_{i=1}^d, \{\phi_i\}_{i=1}^d)$,
called its parameter array.
The scalars $\{\th_i\}_{i=0}^d$ (resp.\ $\{\th^*_i\}_{i=0}^d$) are mutually
distinct,
and the expressions
$(\th_{i-2} - \th_{i+1})/(\th_{i-1}-\th_{i})$,
$(\th^*_{i-2} - \th^*_{i+1})/(\th^*_{i-1}-\th^*_{i})$
are equal and independent of $i$ for $2 \leq i \leq d-1$.
Write this common value as $\beta+1$.
In the present paper, we consider the ``end-parameters''
$\th_0$, $\th_d$, $\th^*_0$, $\th^*_d$, $\vphi_1$, $\vphi_d$,
$\phi_1$, $\phi_d$ of the parameter array.
We show that a Leonard system is determined up to isomorphism 
by the end-parameters and $\beta$.
We display a relation between the end-parameters and $\beta$.
Using this relation, we show that there are up to isomorphism at most $\lfloor (d-1)/2 \rfloor$ 
Leonard systems that have specified end-parameters.
The upper bound $\lfloor (d-1)/2 \rfloor$ is best possible.
\end{quote}
}

\section{Introduction}

Throughout the paper $\F$ denotes an algebraically closed field.

We begin by recalling the notion of a Leonard pair.
We use the following terms.
A square matrix is said to be {\em tridiagonal} whenever each nonzero
entry lies on either the diagonal, the subdiagonal, or the superdiagonal.
A tridiagonal matrix is said to be {\em irreducible} whenever
each entry on the subdiagonal is nonzero and each entry on the superdiagonal is nonzero.

\begin{definition}  {\rm (See \cite[Definition 1.1]{T:Leonard}.)}    \label{def:LP}   \samepage
\ifDRAFT {\rm def:LP}. \fi
Let $V$ be a vector space over $\F$ with finite positive dimension.
By a {\em Leonard pair on $V$} we mean an ordered pair of linear transformations
$A : V \to V$ and $A^*: V \to V$ that satisfy (i) and (ii) below:
\begin{itemize}
\item[\rm (i)]
There exists a basis for $V$ with respect to which the matrix representing
$A$ is irreducible tridiagonal and the matrix representing $A^*$ is diagonal.
\item[\rm (ii)]
There exists a basis for $V$ with respect to which the matrix representing
$A^*$ is irreducible tridiagonal and the matrix representing $A$ is diagonal.
\end{itemize}
\end{definition}

\begin{note}    \samepage
According to a common notational convention, $A^*$ denotes the
conjugate transpose of $A$.
We are not using this convention.
In a Leonard pair $A,A^*$ the matrices $A$ and $A^*$ are arbitrary subject to
the conditions (i) and (ii) above.
\end{note}

We refer the reader to \cite{NT:balanced,T:Leonard,T:24points,T:array,T:survey}
for background on Leonard pairs.

For the rest of this section, 
fix an integer $d \geq 0$ and a vector space $V$ over $\F$ with
dimension $d+1$.
Consider a Leonard pair $A,A^*$ on $V$.
By \cite[Lemma 1.3]{T:Leonard} each of $A,A^*$ has mutually distinct $d+1$ eigenvalues.
Let $\{\th_i\}_{i=0}^d$ be an ordering of the eigenvalues of $A$,
and let $\{V_i\}_{i=0}^d$ be the corresponding eigenspaces.
For $0 \leq i \leq d$ define $E_i : V \to V$ such that
$(E_i - I) V_i = 0$ and $E_i V_j=0$ for $j \neq i$ $(0 \leq j \leq d)$.
Here $I$ denotes the identity.
We call $E_i$ the {\em primitive idempotent} of $A$ associated with $\th_i$.
The primitive idempotent $E^*_i$ of $A^*$ associated with $\th^*_i$
is similarly defined.
For $0 \leq i \leq d$ pick a nonzero $v_i \in V_i$.
Note that $\{v_i\}_{i=0}^d$ is a basis for $V$.
We say the ordering $\{E_i\}_{i=0}^d$ is {\em standard} whenever
the basis $\{v_i\}_{i=0}^d$ satisfies Definition \ref{def:LP}(ii).
A standard ordering of the primitive idempotents of $A^*$ is
similarly defined.
For a standard ordering $\{E_i\}_{i=0}^d$, the ordering $\{E_{d-i}\}_{i=0}^d$
is also standard and no further ordering is standard.
Similar result applies to a standard ordering of the primitive idempotents
of $A^*$.

\begin{definition}   {\rm (See \cite[Definition 1.4]{T:Leonard}.) }    \label{def:LS}
\ifDRAFT {\rm def:LS}. \fi 
By a {\em Leonard system} on $V$ we mean a sequence
\begin{equation}   \label{eq:Phi}
 \Phi = (A, \{E_i\}_{i=0}^d, A^*, \{E^*_i\}_{i=0}^d),
\end{equation}
where $A,A^*$ is a Leonard pair on $V$, and 
$\{E_i\}_{i=0}^d$ (resp.\ $\{E^*_i\}_{i=0}^d$) is a standard ordering 
of the primitive idempotents of $A$ (resp.\ $A^*$).
We say {\em $\Phi$ is over $\F$}.
We call $d$ the {\em diameter} of $\Phi$.
\end{definition}

We recall the notion of an isomorphism of Leonard systems.
Consider a Leonard system \eqref{eq:Phi} on $V$
and a Leonard system $\Phi' = (A', \{E'_i\}_{i=0}^d, A^{*\prime}, \{E^{* \prime}_i\}_{i=0}^d)$
on a vector space $V'$ with dimension $d+1$.
By an {\em isomorphism of Leonard systems from $\Phi$ to $\Phi'$} we mean
a linear bijection $\sigma : V \to V$
such that $\sigma A = A' \sigma$, $\sigma A^* = A^{*\prime} \sigma$,
and $\sigma E_i = E'_i \sigma$, $\sigma E^*_i = E^{*\prime} \sigma$ for $0 \leq i \leq d$.
Leonard systems $\Phi$ and $\Phi'$ are said to be {\em isomorphic}
whenever there exists an isomorphism of Leonard systems from
$\Phi$ to $\Phi'$.

For a Leonard system \eqref{eq:Phi} over $\F$,
each of the following is a Leonard system over $\F$:
\begin{align*}
\Phi^{*}  &:= (A^*, \{E^*_i\}_{i=0}^d, A, \{E_i\}_{i=0}^d), 
\\
\Phi^{\downarrow} &:= (A, \{E_i\}_{i=0}^d, A^*, \{E^*_{d-i}\}_{i=0}^d),
\\
\Phi^{\Downarrow} &:= (A, \{E_{d-i}\}_{i=0}^d, A^*, \{E^*_{i}\}_{i=0}^d).
\end{align*}
Viewing $*$, $\downarrow$, $\Downarrow$ as permutations on the set of
all the Leonard systems,
\begin{equation}    \label{eq:relation}
*^2 = \; \downarrow^2 \; = \; \Downarrow^2 = 1,   \qquad
\Downarrow * = * \downarrow,  \qquad
\downarrow * = * \Downarrow,  \qquad
\downarrow \Downarrow \; = \; \Downarrow \downarrow.
\end{equation}
The group generated by symbols $*$, $\downarrow$, $\Downarrow$ subject
to the relations \eqref{eq:relation} is the dihedral group $D_4$. 
We recall $D_4$ is the group of symmetries of a square, and has $8$ elements.
For an element $g \in D_4$ and for an object $f$ associated with $\Phi$, 
let $f^g$ denote the corresponding object associated with $\Phi^{g^{-1}}$.

We recall the notion of a parameter array.

\begin{definition}   {\rm (See \cite[Section 2]{T:array}, \cite[Theorem 4.6]{NT:formula}.) }
\label{def:parray}
\ifDRAFT {\rm def:parray}. \fi
Consider a Leonard system \eqref{eq:Phi} over $\F$.
By the {\em parameter array of $\Phi$} we mean the sequence
\begin{equation}            \label{eq:parray}
   (\{\th_i\}_{i=0}^d, \{\th^*_i\}_{i=0}^d, \{\vphi_i\}_{i=1}^d, \{\phi_i\}_{i=1}^d),
\end{equation}
where $\th_i$ is the eigenvalue of $A$ associated with $E_i$,
$\th^*_i$ is the eigenvalue of $A^*$ associate with $E^*_i$,
and
\begin{align*}
 \vphi_i &=  (\th^*_0 - \th^*_i)
       \frac{\text{tr}(E^*_0 \prod_{h=0}^{i-1}(A-\th_h I))}
               {\text{tr}(E^*_0\prod_{h=0}^{i-2}(A-\th_h I))},
\\
 \phi_i &= (\th^*_0 - \th^*_i)
       \frac{\text{tr}(E^*_0\prod_{h=0}^{i-1}(A-\theta_{d-h}I))}
              {\text{tr}(E^*_0\prod_{h=0}^{i-2}(A-\theta_{d-h}I))}, 
\end{align*}
where tr means trace.
In the above expressions, the denominators are nonzero by 
\cite[Corollary 4.5]{NT:formula}.
\end{definition}

The following two results are fundamental in the theory of Leonard pairs.

\begin{lemma}   {\rm (See  \cite[Theorem 1.9]{T:Leonard}.) } 
\label{lem:unique}    \samepage
\ifDRAFT {\rm lem:unique}. \fi
A Leonard system is determined up to isomorphism by its
parameter array.
\end{lemma}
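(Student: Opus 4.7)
The plan is to reduce the isomorphism problem to matching a canonical pair of matrix representations determined entirely by the parameter array, through what is often called the \emph{split decomposition} attached to $\Phi$.

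First I would show that $V$ admits a direct sum decomposition $V = U_0 \oplus \cdots \oplus U_d$ into one-dimensional subspaces satisfying $(A-\theta_i I)U_i = U_{i-1}$ and $(A^*-\theta^*_i I)U_i \subseteq U_{i+1}$ for $0 \leq i \leq d$, with the convention $U_{-1} = U_{d+1} = 0$. A natural candidate is
\[
   U_i = \bigl((E^*_0 + E^*_1 + \cdots + E^*_i)V\bigr) \cap \bigl((E_i + E_{i+1} + \cdots + E_d)V\bigr).
\]
Its defining properties follow from the irreducible tridiagonal shape in Definition \ref{def:LP} together with an elementary dimension count using the fact that $\{E_i\}_{i=0}^d$ and $\{E^*_i\}_{i=0}^d$ are standard orderings.

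Next I would pick any nonzero $u_0 \in U_0$ and define $u_i = (A-\theta_{i-1}I)u_{i-1}$ for $1 \leq i \leq d$. With respect to the basis $\{u_i\}_{i=0}^d$, the matrix of $A$ is lower bidiagonal with diagonal $(\theta_0,\ldots,\theta_d)$ and all subdiagonal entries equal to $1$, while $A^*$ is upper bidiagonal with diagonal $(\theta^*_0,\ldots,\theta^*_d)$. Its superdiagonal entries do not depend on the choice of $u_0$, and I would identify them with $\varphi_1,\ldots,\varphi_d$ by comparing both sides with the trace expression in Definition \ref{def:parray}; the analogous identification of $\phi_i$ is obtained from the reversed split decomposition attached to $\Phi^{\Downarrow}$.

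Finally, given two Leonard systems $\Phi, \Phi'$ with the same parameter array, I would transport the split basis of $\Phi$ to that of $\Phi'$ to obtain a linear bijection $\sigma:V \to V'$ intertwining $A$ with $A'$ and $A^*$ with $A^{*\prime}$. Because each $E_i$ (resp.\ $E^*_i$) is the unique idempotent polynomial in $A$ (resp.\ $A^*$) projecting onto the $\theta_i$-eigenspace (resp.\ $\theta^*_i$-eigenspace), and since the eigenvalue orderings agree, $\sigma$ automatically intertwines $E_i$ with $E'_i$ and $E^*_i$ with $E^{*\prime}_i$. The hard part is the construction of the split decomposition and the verification that its superdiagonal entries really realize the trace formula for $\varphi_i$; once those are in place, the rest is a routine transfer of data through a change of basis.
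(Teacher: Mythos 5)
The paper does not prove this lemma at all: it is imported verbatim from \cite[Theorem~1.9]{T:Leonard}, so there is no internal proof to compare against. Your sketch is essentially a reconstruction of the argument in that reference (the split decomposition, the induced lower/upper bidiagonal pair of matrices, and the identification of the superdiagonal entries with the $\vphi_i$ via \cite{NT:formula}), and the overall strategy is sound: once $A$ and $A^*$ have identical matrix representations in the two split bases, transporting the basis gives $\sigma A = A'\sigma$ and $\sigma A^* = A^{*\prime}\sigma$, and the idempotents come along for free because each $E_i$ is the Lagrange interpolation polynomial in $A$ at $\theta_i$ and the eigenvalue orderings agree. Note also that your argument only uses $\{\theta_i\}$, $\{\theta^*_i\}$, $\{\vphi_i\}$; the $\{\phi_i\}$ are not needed for uniqueness, which is consistent with the cited source.

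There is one concrete error you should fix: with your candidate
$U_i = \bigl(\sum_{h=0}^{i} E^*_hV\bigr) \cap \bigl(\sum_{h=i}^{d} E_hV\bigr)$,
the raising/lowering relations go the other way, namely $(A-\theta_i I)U_i = U_{i+1}$ and $(A^*-\theta^*_i I)U_i = U_{i-1}$ (with $U_{-1}=U_{d+1}=0$): applying $A-\theta_i I$ kills the $E_iV$ component and, by the tridiagonal action on the $E^*$-decomposition, raises the first index by at most one. As you have written them, the relations $(A-\theta_i I)U_i = U_{i-1}$ and $(A^*-\theta^*_i I)U_i \subseteq U_{i+1}$ are inconsistent with your own recursion $u_i = (A-\theta_{i-1}I)u_{i-1}$ (which needs $A-\theta_{i-1}I$ to map $U_{i-1}$ onto $U_i$) and with the claim that $A$ is lower bidiagonal and $A^*$ upper bidiagonal in the basis $\{u_i\}_{i=0}^d$. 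This is a sign-of-direction slip rather than a gap in the method, but as stated the displayed properties contradict the rest of the proof.
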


\begin{lemma}  {\rm (See \cite[Theorem 1.9]{T:Leonard}.) } \label{lem:classify}
\ifDRAFT {\rm lem:classify}. \fi
Consider a sequence \eqref{eq:parray} consisting of scalars taken from $\F$.
Then there exists a Leonard system $\Phi$ over $\F$ with parameter array 
\eqref{eq:parray} 
if and only if {\rm (i)--(v)} hold below:
\begin{itemize}
\item[\rm (i)]  
$\th_i \neq \th_j$, $\;\; \th^*_i \neq \th^*_j\;\;$
    $\;\;(0 \leq i < j \leq d)$.
\item[\rm (ii)]
$\vphi_i \neq 0$, $\;\; \phi_i \neq 0\;\;$ $\;\; (1 \leq i \leq d)$.
\item[\rm (iii)]
$ \displaystyle
 \vphi_i = \phi_1 \sum_{\ell=0}^{i-1} 
                            \frac{\th_\ell - \th_{d-\ell}}
                                   {\th_0 - \th_d}
             + (\th^*_i - \th^*_0)(\th_{i-1} - \th_d)  \qquad (1 \leq i \leq d).
$
\item[\rm (iv)]
$ \displaystyle
 \phi_i = \vphi_1 \sum_{\ell=0}^{i-1} 
                            \frac{\th_\ell - \th_{d-\ell}}
                                   {\th_0 - \th_d}
              + (\th^*_i - \th^*_0)(\th_{d-i+1} - \th_0)  \qquad (1 \leq i \leq d).
$
\item[\rm (v)]
The expressions
\begin{equation}    \label{eq:indep}
   \frac{\th_{i-2} - \th_{i+1}}
          {\th_{i-1}-\th_{i}},
 \qquad\qquad
   \frac{\th^*_{i-2} - \th^*_{i+1}}
          {\th^*_{i-1} - \th^*_{i}}
\end{equation}
are equal and independent of $i$ for $2 \leq i \leq d-1$.
\end{itemize}
\end{lemma}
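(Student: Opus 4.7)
The plan is to establish both directions of the biconditional. The necessity direction takes a Leonard system $\Phi$ and shows that its parameter array satisfies (i)--(v); the sufficiency direction explicitly constructs a Leonard system from any scalar sequence satisfying (i)--(v). Uniqueness up to isomorphism would then follow from Lemma~\ref{lem:unique}.

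For necessity, I would work in the \emph{split basis} of $\Phi$. Starting from the eigenspaces $\{V_i\}_{i=0}^d$ of $A$, I would produce a basis $\{u_i\}_{i=0}^d$ living successively in $(A-\th_{i-1}I)\cdots(A-\th_0 I)E^*_0V$, so that with respect to $\{u_i\}$ the matrix of $A$ is lower bidiagonal with diagonal $(\th_0,\ldots,\th_d)$ and subdiagonal entries $1$, while $A^*$ is upper bidiagonal with diagonal $(\th^*_0,\ldots,\th^*_d)$ and superdiagonal entries $\vphi_1,\ldots,\vphi_d$; the analogous construction starting from $E^*_d$ (or equivalently applying~$\Da$) yields $\phi_i$ on the superdiagonal. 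Conditions (i) and (ii) then follow immediately: distinctness of the $\th_i,\th^*_i$ reflects diagonalizability with $\dim V = d+1$, and nonvanishing of $\vphi_i,\phi_i$ reflects irreducibility of the tridiagonal forms in Definition~\ref{def:LP}. Identities (iii), (iv) fall out of computing $\tr(E^*_0\prod_{h=0}^{i-1}(A-\th_h I))$ and the corresponding expression with $\th_{d-h}$ recursively in the split basis, using that $E^*_0$ projects onto $\F u_0$. Condition (v) is the most delicate: it comes from matching entries of $[A,[A,A^*]]$ against the tridiagonal (Askey--Wilson) relations forced on any Leonard pair, which constrain $\{\th_i\}$ and $\{\th^*_i\}$ to each satisfy a common three-term recurrence of the form $\th_{i+1}-(\beta+1)\th_i+\beta\th_{i-1}=\gamma$.

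For sufficiency, I would construct $\Phi$ directly on $V=\F^{d+1}$: set $A^* = \mathrm{diag}(\th^*_0,\ldots,\th^*_d)$ and define $A$ as the irreducible tridiagonal matrix whose entries are explicit rational functions of $\{\th_i,\th^*_i,\vphi_i,\phi_i\}$ dictated by the required split-basis form of $\Phi$. One then checks that $A$ has eigenvalues $\{\th_i\}_{i=0}^d$ (invoking the recurrence from (v) to show the minimal polynomial vanishes), that the resulting eigenspace ordering is standard (using (iii) and (iv) to exhibit a second split basis in which $A^*$ is irreducible tridiagonal), and that the primitive idempotents $E_i,E^*_i$ recover the prescribed $\vphi_i,\phi_i$ via Definition~\ref{def:parray}.

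The main obstacle is the sufficiency direction. While the split form of $A$ is forced in principle, showing that the entries can be consistently chosen so that the constructed $A$ has eigenvalues exactly $\{\th_i\}$ in the standard order requires a nontrivial induction on $d$ that intertwines (iii), (iv), and (v). Equivalently, one must verify that $\prod_{i=0}^d(x-\th_i)$ annihilates $A$, which reduces to a compatibility identity among the $\vphi_i$ and $\phi_i$ that is exactly what (iii)--(iv) provide once (v) guarantees the needed linear recurrence for $\th_i$.
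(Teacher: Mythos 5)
The paper does not prove this lemma: it is quoted with attribution to \cite[Theorem~1.9]{T:Leonard}, so there is no internal argument to compare your attempt against. Your outline does track the strategy of that reference --- the split decomposition putting $A$ in lower bidiagonal and $A^*$ in upper bidiagonal form with $\vphi_i$ (resp.\ $\phi_i$, after applying $\Da$) on the superdiagonal, conditions (i), (ii) from diagonalizability and irreducibility, (iii), (iv) from comparing the two split forms, and (v) from the tridiagonal relations --- so as a roadmap it points in the right direction.

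As a proof, however, it has genuine gaps that you yourself flag. The hard content of \cite[Theorem~1.9]{T:Leonard} is the sufficiency direction: verifying that the pair built from a sequence satisfying (i)--(v) really admits a \emph{second} basis in which the roles of $A$ and $A^*$ are exchanged. That requires constructing the transition maps between the four split bases (one for each of $\Phi$, $\Phi^\da$, $\Phi^\Da$, $\Phi^{\da\Da}$) and showing the relevant entries are nonzero, which is exactly where (ii)--(iv) are consumed; writing that this ``reduces to a compatibility identity\ldots that is exactly what (iii)--(iv) provide'' names the destination without making the journey, and the claim that $\prod_{i=0}^d(x-\th_i)$ annihilates $A$ is not by itself enough to produce the standard ordering of idempotents. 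In the necessity direction, the derivation of (iii), (iv) from the trace expressions in Definition~\ref{def:parray} is likewise asserted to ``fall out'' of a recursive computation, but relating those $E^*_0$-traces to the split-basis data and then to the sums $\sum_{\ell=0}^{i-1}(\th_\ell-\th_{d-\ell})/(\th_0-\th_d)$ is the substance of \cite{NT:formula} and is not routine. In short: your sketch is a correct outline of the external proof, but the two steps you defer are precisely the ones that carry the weight of the theorem.
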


\begin{definition}  {\rm (See \cite[Definition 1.1]{T:array}.) }  \label{def:parrayF}   \samepage
\ifDRAFT {\rm def:parrayF}. \fi
By a {\em parameter array over $\F$} we mean a sequence \eqref{eq:parray}
consisting of scalars taken from $\F$ that satisfy 
conditions {\rm (i)--(v)} in Lemma \ref{lem:classify}.
\end{definition}

\begin{definition}   \samepage    
Let $\Phi$ be a Leonard system over $\F$ with
parameter array \eqref{eq:parray}.
By the {\em fundamental parameter} of $\Phi$ (or \eqref{eq:parray})
we mean one less than the common value of \eqref{eq:indep}.
\end{definition}

The $D_4$ action affects the parameter array as follows:

\begin{lemma}   {\rm (See \cite[Theorem 1.11]{T:Leonard}.) }  \label{lem:D4}  \samepage
\ifDRAFT {\rm lem:D4}. \fi
Consider a Leonard system \eqref{eq:Phi} over $\F$ with parameter array \eqref{eq:parray}.
Then for $g \in \{\downarrow, \Downarrow, *\}$ the parameters
$\th_i^g$, ${\th^*_i}^g$, $\vphi^g_i$, $\phi^g_i$ are as follows:
\[
\begin{array}{c|cccc}
 g \; & \quad \th^g_i & \quad {\th^*_i}^g & \quad  \vphi^g_i & \; \phi^g_i 
\\ \hline
 \downarrow \; & \quad \th_i & \quad  \th^*_{d-i} &  \quad  \phi_{d-i+1} & \;  \vphi_{d-i+1} 
 \rule{0mm}{4mm}
\\
 \Downarrow \;  &\quad \th_{d-i} & \quad  \th^*_i &  \quad  \phi_i & \;  \vphi_i 
\\
 * \; & \quad \th^*_i &  \quad \th_i &  \quad  \vphi_i &  \; \phi_{d-i+1} 
\end{array}
\] 
\end{lemma}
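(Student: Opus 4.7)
The plan is to treat the three involutions $g\in\{\downarrow,\Downarrow,*\}$ one at a time. In each case, the new eigenvalue data $\th^g_i$ and ${\th^*_i}^g$ reads off immediately from the definition of $\Phi^{g^{-1}}$: $\th^g_i$ is the eigenvalue of $A^g$ for $E^g_i$, and inspecting the three displayed formulas for $\Phi^*$, $\Phi^{\downarrow}$, $\Phi^{\Downarrow}$ gives, respectively, the interchange $\th_i\leftrightarrow\th^*_i$, the fixing of $\th_i$ with ${\th^*_i}\mapsto\th^*_{d-i}$, and the reversal $\th_i\mapsto\th_{d-i}$ with ${\th^*_i}$ fixed. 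The substantive content is to compute $\vphi^g_i$ and $\phi^g_i$ from the trace formulas in Definition \ref{def:parray} applied to $\Phi^{g^{-1}}$.

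For $g=\Downarrow$ this is essentially free: $E^*_0$ is unchanged, while the ordering of the eigenvalues of $A$ is reversed, so the product $\prod_{h=0}^{i-1}(A^{\Downarrow}-\th^{\Downarrow}_h I)=\prod_{h=0}^{i-1}(A-\th_{d-h}I)$ is precisely the product defining $\phi_i$ for $\Phi$. Also $\th^{*\Downarrow}_0-\th^{*\Downarrow}_i=\th^*_0-\th^*_i$, so $\vphi^{\Downarrow}_i=\phi_i$, and by symmetry $\phi^{\Downarrow}_i=\vphi_i$. For $g=\downarrow$, the $\th_h$'s are unchanged but $E^*_0$ gets replaced by $E^*_d$; we must therefore show
\[
(\th^*_d-\th^*_{d-i})\,\frac{\tr\!\bigl(E^*_d\prod_{h=0}^{i-1}(A-\th_h I)\bigr)}{\tr\!\bigl(E^*_d\prod_{h=0}^{i-2}(A-\th_h I)\bigr)}=\phi_{d-i+1}.
\]
This is not obvious from the stated trace formula alone, but it falls out of the ``second'' trace expressions for the parameter array (those in the spirit of \cite{NT:formula}): for each $j$, $\phi_j$ has a dual representation using $E^*_d$ and the forward-ordered product $\prod_h(A-\th_h I)$. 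Reindexing by $j=d-i+1$ then gives the identity, and the companion equality $\phi^{\downarrow}_i=\vphi_{d-i+1}$ is identical modulo swapping the two trace formulas.

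The case $g=*$ is the main obstacle. Here everything is swapped: $A\leftrightarrow A^*$, $E_i\leftrightarrow E^*_i$, $\th_i\leftrightarrow\th^*_i$, and we must prove
\[
(\th_0-\th_i)\,\frac{\tr\!\bigl(E_0\prod_{h=0}^{i-1}(A^*-\th^*_h I)\bigr)}{\tr\!\bigl(E_0\prod_{h=0}^{i-2}(A^*-\th^*_h I)\bigr)}=\vphi_i,\qquad\text{and analogously for }\phi^*_i=\phi_{d-i+1}.
\]
The trace formula of Definition \ref{def:parray} is not visibly symmetric in the starred and unstarred data, so the cleanest strategy is to replace it with a description of $\vphi_i$ that is manifestly $*$-self-dual. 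The natural such description is the split decomposition: in a suitable basis one has $A$ lower-bidiagonal with entries $\th_0,\dots,\th_d$ and subdiagonal $1$'s, and $A^*$ upper-bidiagonal with diagonal $\th^*_0,\dots,\th^*_d$ and superdiagonal $\vphi_1,\dots,\vphi_d$; swapping $\Phi\mapsto\Phi^*$ corresponds to transposing all matrices, which fixes the superdiagonal entry of $A^*$ at position $(i-1,i)$ and thus fixes $\vphi_i$. The analogous ``inverted split basis'' (in which the $\th_i$'s are reversed and the superdiagonal entries are $\phi_1,\dots,\phi_d$) yields $\phi^*_i=\phi_{d-i+1}$. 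Finally, one checks the asserted table is consistent by verifying the multiplicative relations of \eqref{eq:relation} at the level of parameter arrays, which, combined with Lemma \ref{lem:unique}, reduces the verification to the three generators already handled.
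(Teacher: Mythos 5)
The paper does not actually prove this lemma --- it is imported verbatim from \cite[Theorem 1.11]{T:Leonard} --- so there is no in-paper argument to compare yours against; what follows is an assessment of your outline on its own terms. Your strategy is the standard one and is correct in outline: the eigenvalue rows and the $\Downarrow$ row do follow immediately from Definition \ref{def:parray} applied to $\Phi^{\Downarrow}$ (since $E^{*\Downarrow}_0=E^*_0$ and $\th^{\Downarrow}_h=\th_{d-h}$), and the split-decomposition is the right tool for $*$. However, two steps are asserted rather than proved, and both are genuine theorems rather than formal consequences of the definitions. First, for $g=\downarrow$ you invoke a representation of $\phi_j$ as a trace ratio based at $E^*_d$; this ``dual'' formula is a nontrivial result of \cite{NT:formula}, not something that ``falls out'' of Definition \ref{def:parray}, so as written the $\downarrow$ row rests on an uncited external identity. (You could avoid this entirely: since $\downarrow\,=\,*\Downarrow*$ by \eqref{eq:relation}, the $\downarrow$ row is a formal consequence of the $*$ and $\Downarrow$ rows together with your final consistency remark, which would make that remark do real work instead of being decorative.) Second, in the $*$ case the sentence ``swapping $\Phi\mapsto\Phi^*$ corresponds to transposing all matrices, which fixes the superdiagonal entry of $A^*$'' is not literally true: transposing the split-form matrices moves $\vphi_i$ from the superdiagonal of $B^*$ to the subdiagonal of $B^{*t}$, and one must conjugate by the diagonal matrix $\mathrm{diag}(1,\vphi_1,\vphi_1\vphi_2,\dots)$ to return to split canonical form, after which the superdiagonal of the second matrix is again $\vphi_i$. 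Moreover, to conclude that the transposed pair represents $\Phi^*$ at all, you need the fact that a Leonard pair is isomorphic to its transpose (the antiautomorphism $\dagger$ of \cite{T:Leonard}) together with the uniqueness of the split form. With those two ingredients supplied explicitly, your argument closes; without them, the $\downarrow$ and $*$ rows are not established.
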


\medskip

For the rest of this section, we assume $d \geq 3$.
The present paper is motivated by the following result:

\begin{proposition}  {\rm (See \cite[Corollary 14.1]{T:Leonard}.)}  \label{prop:T}
\ifDRAFT {\rm prop:T}. \fi
Consider a Leonard system \eqref{eq:Phi} over $\F$ with parameter array
\eqref{eq:parray}.
Then the isomorphism class of $\Phi$ is determined by a sequence of $8$ 
parameters consisting of $\th_0$, $\th_1$, $\th_2$, $\th^*_0$, $\th^*_1$, $\th^*_2$, 
followed by one of $\th_3$, $\th^*_3$, followed by one of 
$\vphi_1$, $\vphi_d$, $\phi_1$, $\phi_d$.
\end{proposition}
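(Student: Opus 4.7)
The plan is to invoke Lemma~\ref{lem:unique} and reconstruct the entire parameter array~\eqref{eq:parray} from the $8$ given scalars. First, I would recover the fundamental parameter $\beta$: the case $i=2$ of condition~(v) of Lemma~\ref{lem:classify} yields
\[
\beta+1 \;=\; \frac{\theta_0-\theta_3}{\theta_1-\theta_2} \;=\; \frac{\theta^*_0-\theta^*_3}{\theta^*_1-\theta^*_2},
\]
with both denominators nonzero by~(i). Hence, whether we are given $\theta_3$ or $\theta^*_3$, together with the six scalars $\theta_0,\theta_1,\theta_2,\theta^*_0,\theta^*_1,\theta^*_2$ it suffices to pin down $\beta$.

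Next, I would read condition~(v) as a three-term linear recurrence,
\[
\theta_{i+1} \;=\; \theta_{i-2}-(\beta+1)(\theta_{i-1}-\theta_i)\qquad (2\le i\le d-1),
\]
together with its analogue for the $\theta^*_i$, to compute $\theta_3,\theta_4,\ldots,\theta_d$ from $\theta_0,\theta_1,\theta_2$ and $\theta^*_3,\theta^*_4,\ldots,\theta^*_d$ from $\theta^*_0,\theta^*_1,\theta^*_2$. At this point both dual sequences are completely determined.

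It remains to reconstruct $\{\vphi_i\}_{i=1}^d$ and $\{\phi_i\}_{i=1}^d$. A brief reindexing shows that $\sum_{\ell=0}^{d-1}(\theta_\ell-\theta_{d-\ell})=\theta_0-\theta_d$, so the weighted sum appearing in~(iii) and~(iv) of Lemma~\ref{lem:classify} equals~$1$ both for $i=1$ and for $i=d$. Specializing (iii) and (iv) at these two values of $i$ therefore yields four affine relations, each expressing one of $\vphi_1,\vphi_d,\phi_1,\phi_d$ as one of $\vphi_1,\phi_1$ plus a quantity already determined from the $\theta$'s and $\theta^*$'s, with leading coefficient~$1$. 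So whichever of $\vphi_1,\vphi_d,\phi_1,\phi_d$ is given, one can solve for both $\vphi_1$ and $\phi_1$, and then (iii), (iv) for general $i$ produce all remaining $\vphi_i$ and $\phi_i$. The only step that warrants care is the reindexing identity just quoted; beyond that the recovery is direct algebra, and the $D_4$ action of Lemma~\ref{lem:D4} could further be invoked to unify the four sub-cases for the last parameter, since $\{\vphi_1,\vphi_d,\phi_1,\phi_d\}$ is permuted among itself by $\downarrow,\Downarrow$. I expect no deeper obstacle.
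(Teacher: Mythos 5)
Your argument is correct and complete: $\beta$ is recovered from the $i=2$ instance of Lemma~\ref{lem:classify}(v), the eigenvalue sequences follow from the three-term recurrence, the reindexing identity $\sum_{\ell=0}^{d-1}(\th_\ell-\th_{d-\ell})=\th_0-\th_d$ does hold, and the four affine relations obtained from (iii), (iv) at $i=1$ and $i=d$ indeed let any one of $\vphi_1,\vphi_d,\phi_1,\phi_d$ determine both $\vphi_1$ and $\phi_1$, hence the whole parameter array, so Lemma~\ref{lem:unique} finishes. The paper itself gives no proof of this proposition (it is quoted from \cite[Corollary 14.1]{T:Leonard}), and your reconstruction is essentially the standard argument behind that cited result.
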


Referring to Proposition \ref{prop:T}, observe that 
the set of the $8$ parameters is not invariant under the $D_4$ action.
Our concern is to find a $D_4$-invariant set of parameters that determines
the isomorphism class of Leonard systems. 
In the present paper, we consider the {\em end-parameters}:
\[
 \th_0, \quad
 \th_d, \quad
 \th^*_0, \quad
 \th^*_d, \quad
 \vphi_1, \quad
 \vphi_d, \quad
 \phi_1, \quad
 \phi_d.
\]
Apparently the set of the end-parameters is invariant
under the $D_4$-action.
Note that the fundamental parameter is $D_4$-invariant.

\begin{theorem}   \label{thm:main1}   \samepage
\ifDRAFT {\rm thm:main1}. \fi
A Leonard system is determined up to isomorphism
by its end-parameters and its fundamental parameter.
\end{theorem}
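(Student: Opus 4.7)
By Lemma \ref{lem:unique} it suffices to show that the parameter array \eqref{eq:parray} is determined by the end-parameters and $\beta$. My plan proceeds in three steps: first recover the four boundary differences $\theta_1-\theta_0$, $\theta_d-\theta_{d-1}$, $\theta^*_1-\theta^*_0$, $\theta^*_d-\theta^*_{d-1}$ from the end-parameters; then propagate them via the three-term recurrence of Lemma \ref{lem:classify}(v) to obtain the complete $\theta_i$ and $\theta^*_i$ sequences; finally reconstruct the inner $\varphi_i$ and $\phi_i$ via Lemma \ref{lem:classify}(iii),(iv).

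For the first step I would evaluate Lemma \ref{lem:classify}(iii) at $i=1$ and $i=d$, observing that the sum $\sum_{\ell=0}^{i-1}(\theta_\ell-\theta_{d-\ell})$ equals $\theta_0-\theta_d$ in both cases (trivially at $i=1$, and by telescoping at $i=d$). This yields
\[
\theta^*_1-\theta^*_0 \;=\; \frac{\varphi_1-\phi_1}{\theta_0-\theta_d}, \qquad \theta_d-\theta_{d-1} \;=\; \frac{\phi_1-\varphi_d}{\theta^*_d-\theta^*_0},
\]
and a parallel evaluation of Lemma \ref{lem:classify}(iv) at $i=d$ produces $\theta_1-\theta_0=(\phi_d-\varphi_1)/(\theta^*_d-\theta^*_0)$. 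Applying the $*$-duality from Lemma \ref{lem:D4} to the second identity yields $\theta^*_d-\theta^*_{d-1}=(\phi_d-\varphi_d)/(\theta_d-\theta_0)$, completing this step.

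For the second step, condition (v) of Lemma \ref{lem:classify} translates, via the substitution $\delta_i:=\theta_i-\theta_{i-1}$, into the second-order linear recurrence $\delta_{i+1}=\beta\,\delta_i-\delta_{i-1}$ for $2\le i\le d-1$, whose solution space is two-dimensional. We impose on this space three linear constraints: the known values of $\delta_1$, $\delta_d$, and of the total sum $\delta_1+\cdots+\delta_d=\theta_d-\theta_0$. Generically, when the Chebyshev-type polynomial $U_{d-2}(\beta)$ (defined by $U_0=1$, $U_1=\beta$, $U_{n+1}=\beta U_n-U_{n-1}$) is nonzero, the pair $(\delta_1,\delta_d)$ alone determines the entire sequence; in the exceptional case $U_{d-2}(\beta)=0$ (which for $\beta=q+q^{-1}$, $q\ne\pm1$, forces $q^{d-1}=-1$), the total-sum constraint supplies the missing information, with nondegeneracy ensured by the distinctness of the $\theta_i$ given by Lemma \ref{lem:classify}(i). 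The identical argument applied to $\delta^*_i$ recovers all $\theta^*_i$.

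With all $\theta_i$ and $\theta^*_i$ known, Lemma \ref{lem:classify}(iii) expresses each $\varphi_i$ explicitly as a function of $\phi_1$ together with the $\theta_j$ and $\theta^*_j$, and Lemma \ref{lem:classify}(iv) likewise expresses each $\phi_i$ in terms of $\varphi_1$, so the full parameter array is recovered. The main obstacle is the uniqueness claim in the second step: away from the degeneracies the pair $(\delta_1,\delta_d)$ alone suffices, but at the exceptional values of $\beta$ one must carefully combine the boundary values with the total-sum condition, exploiting the distinctness hypothesis to rule out nontrivial kernel solutions of the boundary-value problem.
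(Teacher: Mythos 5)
Your route is genuinely different from the paper's. The paper splits into the five types I, II, III$^+$, III$^-$, IV, quotes the closed forms for the parameter array from \cite{NT:affine} (Lemmas \ref{lem:typeIclosed}--\ref{lem:typeIVclosed}), reads off from them explicit formulas for every $\th_i$, $\th^*_i$ in terms of the end-parameters and $q$ (Lemmas \ref{lem:typeIth}--\ref{lem:typeIVth}), and then finishes exactly as you do via Lemma \ref{lem:classify}(iii),(iv) and Lemma \ref{lem:unique}. Your first and third steps are correct: evaluating Lemma \ref{lem:classify}(iii),(iv) at $i=1$ and $i=d$, where the normalized sum equals $1$, does recover the four boundary differences, and once the eigenvalue sequences are known the inner $\vphi_i$, $\phi_i$ follow. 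Working directly with the recurrence $\delta_{i+1}=\beta\delta_i-\delta_{i-1}$ avoids both the type-by-type case analysis and the choice of $q$, which is a real economy when it works.

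However, your resolution of the degenerate case has a genuine gap. When $U_{d-2}(\beta)=0$, uniqueness requires that the functional $(\delta_1,\ldots,\delta_d)\mapsto\sum_i\delta_i$ be nonzero on the one-dimensional kernel of $(\delta_1,\ldots,\delta_d)\mapsto(\delta_1,\delta_d)$, i.e.\ that $\sum_{j=0}^{d-2}U_j(\beta)\neq 0$. The distinctness condition of Lemma \ref{lem:classify}(i) cannot supply this: it constrains each candidate array separately, so if both quantities vanished there could exist two parameter arrays, each with mutually distinct eigenvalues, sharing the same end-parameters and fundamental parameter. Moreover the degeneracy is not confined to $q^{d-1}=-1$: for $\beta=-2$ with $d$ even and $\text{\rm Char}(\F)=d-1$ one has $U_{d-2}(-2)=d-1=0$, and for type IV ($d=3$, $\beta=0$, $\text{\rm Char}(\F)=2$) one has $U_{1}(0)=0$. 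What actually saves the argument is a case-by-case computation, e.g.\ $\sum_{j=0}^{d-2}U_j(\beta)=-2q/(q-1)^2$ when $q^{d-1}=-1$ and $q\neq\pm 1$, and $\sum_{j=0}^{d-2}U_j(-2)=d/2$ when $d$ is even, combined with the characteristic restrictions recorded in Notes \ref{note:I}--\ref{note:III-} (for instance, that $d$ does not vanish for type III$^+$). This is fillable, but it is a missing step rather than a detail, and filling it reintroduces much of the type analysis your approach was designed to bypass.
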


The end-parameters are related to the fundamental parameter
as follows:

\begin{proposition}     \label{prop:Omega}   \samepage
\ifDRAFT {\rm prop:Omega}. \fi
Consider a parameter array \eqref{eq:parray} over $\F$.
Let $\beta$ be the fundamental parameter of \eqref{eq:parray},
and pick a nonzero $q \in \F$ such that $\beta=q+q^{-1}$.
Then the scalar
\[
 \Omega = \frac{\phi_1 + \phi_d - \vphi_1 - \vphi_d}
                      {(\th_0-\th_d)(\th^*_0 - \th^*_d)}.
\]
is as follows:
\[
\begin{array}{lll|c}
 & \text{\rm \;\;\; Case } & & \Omega 
\\ \hline
\beta \neq 2,  &  \beta \neq -2 & 
 & \displaystyle  \frac{ (q-1)(q^{d-1}+1)}{q^d-1}
  \rule{0mm}{7mm}
\\
\beta = 2, &  \text{\rm Char}(\F) \neq 2  & 
 &   2/d   \rule{0mm}{5mm}
\\
\beta = -2,   &   \text{\rm Char}(\F) \neq 2,   &  \text{\rm $d$ is even}
&  2(d-1)/d   \rule{0mm}{5mm}
\\
\beta = -2,  & \text{\rm Char}(\F) \neq 2, &  \text{\rm $d$ is odd}
  &  2     \rule{0mm}{5mm}
\\
\beta = 0,  & \text{\rm Char}(\F)=2 &
&  0     \rule{0mm}{5mm}
\end{array}
\]
\end{proposition}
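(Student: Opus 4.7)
The plan is to eliminate $\th^*_i$, $\vphi_i$, $\phi_i$ from $\Omega$ using parts (iii) and (iv) of Lemma \ref{lem:classify}, reducing $\Omega$ to a ratio in the $\th_i$ alone, and then to evaluate that ratio via the closed form for $\{\th_i\}$ forced by condition (v).

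For the reduction, applying (iii) at $i=1$ (where the sum contains only the term $\ell=0$) yields $\vphi_1 - \phi_1 = (\th^*_1-\th^*_0)(\th_0-\th_d)$, and applying (iii) and (iv) at $i=d$, using the telescoping identity $\sum_{\ell=0}^{d-1}(\th_\ell - \th_{d-\ell}) = \th_0-\th_d$ to collapse the sum to $1$, yields
\[
 \vphi_d = \phi_1 + (\th^*_d-\th^*_0)(\th_{d-1}-\th_d),
 \qquad
 \phi_d = \vphi_1 + (\th^*_d-\th^*_0)(\th_1-\th_0).
\]
Adding these three identities and cancelling, one obtains $\phi_1 + \phi_d - \vphi_1 - \vphi_d = (\th^*_d-\th^*_0)\bigl[(\th_1-\th_0) + (\th_d-\th_{d-1})\bigr]$, whence
\[
 \Omega = \frac{(\th_1-\th_0) + (\th_d-\th_{d-1})}{\th_d - \th_0}.
\]

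For the evaluation, condition (v) asserts that $\{\th_i\}$ satisfies a linear recurrence whose characteristic polynomial is $(r-1)(r^2-\beta r+1)$, with roots $1,q,q^{-1}$. Accordingly, the general solution has the form $\th_i = \alpha + bq^i + cq^{-i}$ when $q \neq \pm 1$; $\th_i = \alpha + bi + ci^2$ when $q=1$ and $\mathrm{Char}(\F) \neq 2$; $\th_i = \alpha + b(-1)^i + ci(-1)^i$ when $q = -1$ and $\mathrm{Char}(\F) \neq 2$; and $\th_i = \alpha + bi + c\binom{i}{2}$ when $q=1$ and $\mathrm{Char}(\F) = 2$. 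In each case I would substitute into the reduced formula for $\Omega$: the numerator and denominator share a common factor, which is nonzero by $\th_d \neq \th_0$, and cancelling it leaves the entry of the table. For instance, in the generic case $q \neq \pm 1$ one finds
\[
 (\th_1 - \th_0) + (\th_d - \th_{d-1}) = (q-1)(q^{d-1}+1)(b - cq^{-d}),
 \qquad
 \th_d - \th_0 = (q^d - 1)(b - cq^{-d}),
\]
whence $\Omega = (q-1)(q^{d-1}+1)/(q^d-1)$.

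The only case requiring separate attention is $\beta = 0$ with $\mathrm{Char}(\F) = 2$: there the recurrence is periodic of period $4$ (one has $\th_{i+4} = \th_i$ identically), so the distinctness condition (i) of Lemma \ref{lem:classify} forces $d = 3$. For $d = 3$ the recurrence gives $\th_3 = \th_0 + \th_1 + \th_2$ in characteristic $2$, and hence $(\th_1 - \th_0) + (\th_3 - \th_2) = \th_0 + \th_1 + \th_2 + \th_3 = 0$, giving $\Omega = 0$. Beyond organizing this case split and verifying nonvanishing of the common factor in each of the four generic cases, the proof is a direct computation; the only conceptual hurdle is recognizing that the characteristic-$2$ case compels $d=3$ rather than admitting a nontrivial closed-form family.
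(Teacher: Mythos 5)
Your proof is correct, and it takes a genuinely different route from the paper's. The paper substitutes the closed forms of Lemma \ref{lem:typeIclosed} (and its analogues for the other types) for all of $\th_i$, $\th^*_i$, $\vphi_i$, $\phi_i$, computes the numerator and denominator of $\Omega$ separately in terms of the auxiliary scalars $\eta,\mu,h,\eta^*,\mu^*,h^*,\tau$, and repeats this for each of the five types (only type I is written out). You instead use Lemma \ref{lem:classify}(iii),(iv) at $i=d$ together with the telescoping $\sum_{\ell=0}^{d-1}(\th_\ell-\th_{d-\ell})=\th_0-\th_d$ to get the type-independent identity
\[
 \phi_1+\phi_d-\vphi_1-\vphi_d=(\th^*_d-\th^*_0)\bigl((\th_1-\th_0)+(\th_d-\th_{d-1})\bigr),
\]
so that $\Omega=\bigl((\th_1-\th_0)+(\th_d-\th_{d-1})\bigr)/(\th_d-\th_0)$ depends only on the eigenvalue sequence; the case analysis is then confined to solving the three-term recurrence $(E-1)(E^2-\beta E+1)\th=0$ coming from condition (v), and the factor you cancel is nonzero in every case precisely because $\th_0\neq\th_d$. (A minor quibble: only the two $i=d$ identities are needed; the $i=1$ identity you list plays no role in the cancellation.) Your reduction buys uniformity --- the closed forms for $\vphi_i$, $\phi_i$ and $\th^*_i$ are never needed, and the result is exposed as a statement purely about the recurrence --- at the cost of a short discussion of solution bases in small characteristic; in particular your observation that $\beta=0$, $\mathrm{Char}(\F)=2$ forces $d=3$ via $\th_{i+4}=\th_i$ replaces the paper's appeal to Lemma \ref{lem:typeIVclosed}. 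Both arguments are complete, and yours also makes Corollary \ref{cor:Omega} more transparent.
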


\begin{corollary}    \label{cor:Omega}    \samepage
\ifDRAFT {\rm cor:Omega}. \fi
With reference to Proposition \ref{prop:Omega}, 
$\Omega \neq 1$.
\end{corollary}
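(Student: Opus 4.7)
The plan is to apply Proposition~\ref{prop:Omega} and verify
$\Omega \neq 1$ in each of the five cases tabulated there.
Cases~4 and~5 are immediate, since $\Omega = 2 \neq 1$ and
$\Omega = 0 \neq 1$ in every field.

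For Case~1, where $\beta \neq \pm 2$, I would clear denominators
in $\Omega = 1$ to reduce it to $q(1 - q^{d-2}) = 0$, i.e.\
$q^{d-2} = 1$. To rule this out, recall the standard consequence
of Lemma~\ref{lem:classify} that $q^k \neq 1$ for
$1 \leq k \leq d$: condition~(v) pins the $\th_i$ to the shape
$c_0 + c_1 q^i + c_2 q^{-i}$, giving
$\th_j - \th_i = (q^j - q^i)(c_1 - c_2 q^{-i-j})$, and the
distinctness clause~(i) at $i=0$, $j=k$ forces $q^k \neq 1$.
Since $d \geq 3$, the exponent $d-2$ lies in the interval
$[1,d]$, so $q^{d-2} \neq 1$ and $\Omega \neq 1$.

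In Cases~2 and~3, a direct algebraic check gives
$\Omega - 1 = (2-d)/d$ and $\Omega - 1 = (d-2)/d$ respectively,
so $\Omega = 1$ amounts in each case to $\text{char}(\F)\mid d-2$.
My plan is to rule this out from the explicit shape of the $\th_i$
imposed by condition~(v): when $\beta = 2$, one is forced into
$\th_i = c_0 + c_1 i + c_2 i^2$, so $\th_j - \th_i$ carries an
overall factor $j-i$, and distinctness then forces
$\text{char}(\F) = 0$ or $\text{char}(\F) > d$; when $\beta = -2$
with $d = 2m$ even, an analogous alternating shape yields
$\th_{2k} - \th_0 = k b$ with $b$ a fixed nonzero scalar,
forcing $\text{char}(\F) = 0$ or $\text{char}(\F) > m$.
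Combined with $\text{char}(\F) \neq 2$ and $d \geq 3$
(respectively $d \geq 4$ and $d-2 = 2(m-1)$), neither bound
permits $\text{char}(\F) \mid d-2$, so $\Omega \neq 1$.

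The main obstacle will be Cases~2 and~3: the algebraic
reduction is trivial, but one must carefully invoke the
characteristic bounds on the existence of parameter arrays
with $\beta = \pm 2$ and diameter $d$. All other cases
are pure arithmetic in $\F$.
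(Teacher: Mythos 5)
Your proposal is correct and follows essentially the route the paper intends: the corollary is stated without an explicit proof, and the intended argument is exactly your case-by-case check of the table in Proposition~\ref{prop:Omega}, reducing Case~1 to $q^{d-2}\neq 1$ and Cases~2--3 to the non-vanishing of $d-2$ in $\F$. The only difference is cosmetic: the facts you re-derive from Lemma~\ref{lem:classify} (that $q^i\neq 1$ for $1\leq i\leq d$, and that $\mathrm{Char}(\F)$ is $0$ or large relative to $d$) are already recorded in the paper as Notes~\ref{note:I}, \ref{note:II}, and \ref{note:III+}, so you could simply cite them.
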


\begin{theorem}   \label{thm:main2}   \samepage
\ifDRAFT {\rm thm:main2}. \fi
There exist up to isomorphism at most $\lfloor (d-1)/2 \rfloor$ Leonard systems
with diameter $d$ that have specified end-parameters.
\end{theorem}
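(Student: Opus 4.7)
The plan is to apply Theorem~\ref{thm:main1} to reduce to bounding the fundamental parameter $\beta$, and then analyze a polynomial equation. By Theorem~\ref{thm:main1}, a Leonard system of diameter $d$ is determined up to isomorphism by its end-parameters together with its fundamental parameter $\beta$, so it suffices to bound the number of $\beta \in \F$ that can arise once the end-parameters are fixed. By Proposition~\ref{prop:Omega}, the end-parameters determine the scalar $\Omega$; writing its value as $c$, the same proposition expresses $\Omega$ as a function of $\beta$ (and $d$ and the characteristic). Thus the task reduces to bounding the number of $\beta \in \F$ for which $\Omega(\beta) = c$, where $c \neq 1$ by Corollary~\ref{cor:Omega}.

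In the case $\beta \neq \pm 2$, I would write $\beta = q + q^{-1}$ and clear denominators: the equation $\Omega(\beta) = c$ becomes the degree-$d$ polynomial equation
\[
 f(q) := (1-c)q^d - q^{d-1} + q - (1-c) = 0.
\]
A direct computation shows $f(q^{-1}) = -q^{-d} f(q)$, so $f$ is antipalindromic; hence the involution $q \mapsto q^{-1}$ permutes the roots of $f$ together with their multiplicities. Since $f(0) = c - 1 \neq 0$, all $d$ roots are nonzero, and each nonfixed orbit $\{q, q^{-1}\}$ contributes a single value of $\beta$ with $\beta \neq \pm 2$. The fixed points of this involution are $q = \pm 1$, corresponding to $\beta = \pm 2$.

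The next task is to control the multiplicities $m_{\pm 1}$ of $\pm 1$ as roots of $f$ and line them up with the admissibility of $\beta = \pm 2$. In characteristic not $2$, short computations give $m_1 \ge 1$ always and $m_1 \ge 2$ iff $c = 2/d$, together with analogous statements for $m_{-1}$ distinguishing $d$ even and $d$ odd; these conditions on $c$ match exactly the criteria from Proposition~\ref{prop:Omega} for $\beta = \pm 2$ itself to satisfy $\Omega(\beta) = c$. Combined with the parity constraint $m_1 + m_{-1} \equiv d \pmod{2}$ forced by the pairing of nonfixed roots, a case analysis shows the total number of admissible $\beta$ is at most $\lfloor (d-1)/2 \rfloor$. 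Equivalently, after dividing $f$ by $(q-1)$ and, when $d$ is even, also by $(q+1)$, the palindromic remainder descends via $\beta = q + q^{-1}$ to a polynomial in $\beta$ of degree $\lfloor (d-1)/2 \rfloor$ whose roots include every admissible $\beta$.

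The main obstacle is the characteristic~$2$ case, where $q = 1$ is the unique self-inverse point and the polynomial count alone can yield one spurious value of $\beta$ (namely $\beta = 0$) when $c = 0$ and $d$ is even. To eliminate this candidate I would invoke the distinctness requirement on $\{\theta_i\}_{i=0}^d$ from Lemma~\ref{lem:classify}(i): when $\beta + 1 = 1$, the recurrence in Lemma~\ref{lem:classify}(v) forces $\theta_{i+4} = \theta_i$, which contradicts distinctness as soon as $d \geq 4$. Hence $\beta = 0$ is not admissible for $d \geq 4$ in characteristic~$2$, and this additional input restores the bound $\lfloor (d-1)/2 \rfloor$ in that case as well.
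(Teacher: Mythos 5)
Your proposal follows essentially the same route as the paper: reduce via Theorem~\ref{thm:main1} to counting admissible fundamental parameters, use Proposition~\ref{prop:Omega} and Corollary~\ref{cor:Omega} to turn the type~I case into the degree-$d$ polynomial equation (your $f$ is the negative of the paper's $f_\Omega$ from Definition~\ref{def:f}), exploit the symmetry $q\mapsto q^{-1}$, and treat $q=\pm1$ (i.e.\ $\beta=\pm2$) separately. The differences are only in bookkeeping: the paper bounds the roots other than $\pm1$ by exhibiting explicit factorizations of $f_\Omega$ (Lemmas~\ref{lem:f}, \ref{lem:equation}), whereas you use multiplicity computations at $\pm1$ together with the parity constraint $m_1+m_{-1}\equiv d\pmod 2$; and in characteristic $2$ you rule out $\beta=0$ for $d\geq 4$ directly from the three-term recurrence of Lemma~\ref{lem:classify}(v), where the paper relies on the fact that type~IV forces $d=3$. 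Both of those substitutions are sound.

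There is one subcase where the ingredients you name do not by themselves close the count, and it is exactly the point the paper isolates as a separate ``claim'': the possibility that \emph{both} $\beta=2$ and $\beta=-2$ are admissible (characteristic $\neq 2$). For $d$ even this would give $m_1\geq 2$ and $m_{-1}\geq 2$, and parity only forces $m_1+m_{-1}\geq 4$, so your count reads $(d-4)/2+2=d/2$, exceeding $\lfloor(d-1)/2\rfloor=(d-2)/2$. You need to show this subcase is vacuous. The paper does so by observing that $d\,\Omega=2$ together with $d\,\Omega=2(d-1)$ or $\Omega=2$ forces $d-2$ or $d-1$ to vanish, contradicting Note~\ref{note:II}; in your setup one can instead note that for $d$ even the two conditions force $\Omega=1$, contradicting Corollary~\ref{cor:Omega} (which you already have in hand), while for $d$ odd your parity argument does push $m_1+m_{-1}$ to $\geq 5$ and the count survives. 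So the gap is real but local and easily repaired; everything else matches the paper's proof.
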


In Theorem \ref{thm:main2}, the upper bound $\lfloor (d-1)/2 \rfloor$ is best possible:

\begin{theorem}   \label{thm:main3}   \samepage
\ifDRAFT {\rm thm:main3}. \fi
Assume $\text{\rm Char}(\F) \neq 2$ and $d$ does not vanish in $\F$.
Then there exist mutually non-isomorphic $\lfloor (d-1)/2 \rfloor$ Leonard systems 
with diameter $d$ that have common end-parameters.
\end{theorem}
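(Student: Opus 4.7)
The plan is to exhibit $\lfloor (d-1)/2\rfloor$ parameter arrays over $\F$ that share common end-parameters but whose fundamental parameters $\beta$ are pairwise distinct; Theorem~\ref{thm:main1} then ensures the corresponding Leonard systems are pairwise non-isomorphic.

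First, I would identify the admissible values of $\beta$. According to the generic ($\beta \neq \pm 2$) case of Proposition~\ref{prop:Omega}, a prescribed value of $\Omega$ forces $\beta = q+q^{-1}$ with $q$ satisfying
\[
(1-\Omega)(q^d-1) \;=\; q(q^{d-2}-1),
\]
which, since $\Omega \neq 1$ by Corollary~\ref{cor:Omega}, is a polynomial equation of degree $d$ in $q$. The root $q=1$ is always present (giving the degenerate $\beta=2$), and $q=-1$ is a root precisely when $d$ is even (giving the degenerate $\beta=-2$). Factoring these out and using the reciprocal symmetry $\Omega(q)=\Omega(q^{-1})$—so that the remaining roots come in pairs $\{q,q^{-1}\}$—a direct count shows that a generic target value of $\Omega$ yields exactly $\lfloor (d-1)/2\rfloor$ distinct values of $\beta$, all satisfying $\beta \neq \pm 2$. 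The hypotheses $\text{\rm Char}(\F)\neq 2$ and $d\neq 0$ in $\F$ enter here to rule out spurious coincidences and to prevent the characteristic-dependent rows of Proposition~\ref{prop:Omega} from merging with the generic row.

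Next, for each admissible $\beta$ I would build a parameter array \eqref{eq:parray} with the prescribed end-parameters. A natural vehicle is the q-Racah ansatz
\[
\th_i = \th_0 + h(1-q^i)(1-sq^{i+1})q^{-i},
\qquad
\th^*_i = \th^*_0 + h^*(1-q^i)(1-s^*q^{i+1})q^{-i},
\]
together with companion rational expressions for $\vphi_i$ and $\phi_i$ involving scalars $r_1,r_2$ constrained by $r_1 r_2 = ss^*q^{d+1}$. Substituting $i=d$ into the formulas for $\th_i$ and $\th^*_i$ and $i\in\{1,d\}$ into the formulas for $\vphi_i$ and $\phi_i$ converts the eight prescribed end-parameters into eight explicit equations for the five internal scalars $h,h^*,s,s^*,r_1$. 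Solving this system yields a parameter array; conditions (iii)--(v) of Lemma~\ref{lem:classify} hold by construction of the ansatz, while conditions (i) and (ii) reduce to the non-vanishing of a finite list of explicit polynomial expressions in $q$. Choosing the common end-parameters generically—subject only to $\Omega$ being the targeted value—avoids these bad loci for every admissible $\beta$ simultaneously.

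Applying Theorem~\ref{thm:main1} then concludes the argument: the $\lfloor (d-1)/2\rfloor$ constructed Leonard systems have identical end-parameters but pairwise distinct fundamental parameters, hence are pairwise non-isomorphic. The main obstacle is the second step—one must perform the q-Racah inversion uniformly for all $\lfloor (d-1)/2\rfloor$ admissible $\beta$'s and verify the non-degeneracy conditions in each case. The cleanest route is presumably to fix a sufficiently generic reference point in end-parameter space compatible with the chosen $\Omega$, and then check the necessary non-vanishings case by case—a finite exercise in rational-function arithmetic whose feasibility is underpinned by the assumptions on $\text{\rm Char}(\F)$ and on $d$.
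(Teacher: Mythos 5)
Your overall strategy coincides with the paper's: produce $\lfloor (d-1)/2\rfloor$ parameter arrays sharing the eight end-parameters whose fundamental parameters are pairwise distinct, and conclude non-isomorphism from the fact that the parameter array determines the isomorphism class. Your count of admissible $\beta$'s via the degree-$d$ equation $(1-\Omega)(q^d-1)=q(q^{d-2}-1)$, the removal of $q=\pm1$, and the reciprocal pairing $q\leftrightarrow q^{-1}$ is exactly the content of Lemmas \ref{lem:f0}--\ref{lem:choiceq}. Where you diverge is the construction step: you propose a q-Racah ansatz and an inversion of the end-parameter equations in the internal scalars, whereas the paper simply defines $\tth_i,\tth^*_i$ by the closed formulas of Lemma \ref{lem:typeIth} and then forces $\tvphi_i,\tphi_i$ via Lemma \ref{lem:classify}(iii),(iv), leaving a single free scalar $\zeta=\phi_1$ to adjust (Section \ref{sec:const}). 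The paper's route avoids your over-determined system (more end-parameter equations than free internal scalars, consistent only because of the $\Omega$ relation) entirely.

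There are two genuine gaps. First, you propose to secure conditions (i), (ii) of Lemma \ref{lem:classify} by choosing the common end-parameters generically, but some of the obstructions depend on $q$ alone: in type I one has $\vphi_i=(q^i-1)(q^{d-i+1}-1)(\cdots)$, so if $q$ is an $r$th root of unity for some $3\le r\le d$ then $\vphi_i=0$ identically, and no choice of end-parameters can repair this. This must be excluded by restricting $\Omega$, not the end-parameters; the paper does so in Lemma \ref{lem:choiceOmega} by showing that the finitely many values $\omega_q$ with $q\in\Gamma$ can be avoided. Second, both of your genericity claims --- that a generic $\Omega$ gives $f_\Omega$ no repeated roots, and that a generic end-parameter choice makes every $\tvphi_i,\tphi_i$ nonzero and the $\tth_i$ (resp.\ $\tth^*_i$) mutually distinct --- require showing that the relevant bad loci are \emph{proper}, i.e.\ that certain polynomials are not identically zero. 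That is precisely where the paper does its real work: the resultant computation of Lemma \ref{lem:repeated} (whose nonvanishing uses the hypothesis that $d$ does not vanish in $\F$), and the explicit expressions $Z_1,\dots,Z_4$ of Section \ref{sec:proofmain3}, which are checked to be nonzero or of positive degree in $\zeta$ for each $i$. Without these verifications the word ``generic'' carries no force: a priori one of the loci could exhaust the parameter space for some admissible $q$, which would lower the count below $\lfloor(d-1)/2\rfloor$.
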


The paper is organized as follows.
In Section \ref{sec:types} we recall some formulas concerning the parameter array.
In Section \ref{sec:proof1} we prove Theorem \ref{thm:main1}.
In Section \ref{sec:proofprop} we prove Proposition \ref{prop:Omega}.
In Section \ref{sec:polynomial} we consider a certain polynomial which is
used in the proof of Theorems \ref{thm:main2} and \ref{thm:main3}.
In Section \ref{sec:proofmain2} we prove Theorem \ref{thm:main2}.
In Section \ref{sec:const} we try to construct a parameter array
having specified end-parameters.
In Section \ref{sec:proofmain3} we prove Theorem \ref{thm:main3}.
In Appendix we display formulas that represent $\{\vphi_i\}_{i=1}^d$ and
$\{\phi_i\}_{i=1}^d$ in terms of the end-parameters and the fundamental parameter.

\section{Parameter arrays in closed form}
\label{sec:types}

Fix an integer $d \geq 3$.
Let \eqref{eq:parray} be a parameter array over $\F$ with
fundamental parameter $\beta$.
We consider the following types of the parameter array:
\[
 \begin{array}{c|lll}
  \text{Type} & & \text{Description} 
\\ \hline
  \text{I} & \;\; \beta \neq  2, & \; \beta \neq -2 \rule{0mm}{4.5mm}
\\
 \text{II} & \;\; \beta=2, & \text{Char}(\F) \neq 2
\\
\text{III}^+ & \;\; \beta = -2, & \text{Char}(\F) \neq 2, & \text{$d$ is even}
\\
\text{III}^- & \;\; \beta = -2, &  \text{Char}(\F) \neq 2, & \text{$d$ is odd}
\\
\text{IV} & \;\; \beta = 0, & \text{Char}(\F)=2
 \end{array}
\]
For each type we display formulas that represent the parameter array
in closed form.

\begin{lemma}  {\rm (See \cite[Lemma 13.1]{NT:affine}.) }  \label{lem:typeIclosed}
\ifDRAFT {\rm lem:typeIclosed}. \fi
Assume the parameter array \eqref{eq:parray} has type  I.
Pick a nonzero $q \in \F$ such that $\beta = q+q^{-1}$.
Then there exist scalars $\eta$, $h$, $\mu$, $\eta^*$, $h^*$, $\mu^*$, $\tau$ in $\F$
such that
\begin{align*}
 \th_i &= \eta + \mu q^i + h q^{d-i},
\\
 \th^*_i &= \eta^* + \mu^* q^i + h^* q^{d-i}
\intertext{for $0 \leq i \leq d$, and}
 \vphi_i &= (q^i-1)(q^{d-i+1}-1)(\tau - \mu \mu^* q^{i-1} - h h^* q^{d-i}),
\\
 \phi_i &= (q^i-1)(q^{d-i+1}-1)(\tau - h \mu^* q^{i-1} - \mu h^* q^{d-i})
\end{align*}
for $1 \leq i \leq d$.
\end{lemma}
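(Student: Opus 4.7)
The plan is to first solve the linear recurrence on $\{\th_i\}_{i=0}^d$ implied by condition (v) of Lemma \ref{lem:classify} to obtain the stated closed form for $\th_i$, and similarly for $\th^*_i$; then substitute those closed forms into conditions (iii) and (iv) to read off the formulas for $\vphi_i$ and $\phi_i$, with the scalar $\tau$ emerging as the $i$-independent part of the resulting expressions.

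First I would rewrite $(\th_{i-2}-\th_{i+1})/(\th_{i-1}-\th_i) = \beta+1$ as the three-step linear recurrence
\[
\th_{i+1} - (\beta+1)\th_i + (\beta+1)\th_{i-1} - \th_{i-2} = 0 \qquad (2 \le i \le d-1).
\]
Its characteristic polynomial factors as $(x-1)(x^2-\beta x+1)$, which in type I has three distinct roots $1, q, q^{-1}$ because $\beta=q+q^{-1}$ with $\beta\neq\pm2$. The general solution is therefore $\th_i = \eta + \mu q^i + h' q^{-i}$, and absorbing $q^{-d}$ into $h'$ gives the stated form $\th_i = \eta + \mu q^i + h q^{d-i}$. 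The same argument applied to the starred recurrence yields $\th^*_i = \eta^* + \mu^* q^i + h^* q^{d-i}$.

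Next I would substitute these closed forms into condition (iii) of Lemma \ref{lem:classify}. A short computation gives $\th_\ell - \th_{d-\ell} = (\mu-h)(q^\ell - q^{d-\ell})$, so $\th_0 - \th_d = (\mu-h)(1-q^d)$, and hence
\[
\sum_{\ell=0}^{i-1} \frac{\th_\ell - \th_{d-\ell}}{\th_0 - \th_d}
= \frac{(q^i-1)(q^{d-i+1}-1)}{(q-1)(q^d-1)}.
\]
One also factors $\th^*_i - \th^*_0 = (q^i-1)(\mu^* - h^* q^{d-i})$ and $\th_{i-1} - \th_d = (q^{d-i+1}-1)(h - \mu q^{i-1})$, so condition (iii) has the common factor $(q^i-1)(q^{d-i+1}-1)$ throughout. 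Expanding $(\mu^* - h^* q^{d-i})(h - \mu q^{i-1})$ yields terms $h\mu^*$, $-\mu\mu^* q^{i-1}$, $-hh^* q^{d-i}$, and $\mu h^* q^{d-1}$, so the stated form for $\vphi_i$ emerges with
\[
\tau = \frac{\phi_1}{(q-1)(q^d-1)} + h\mu^* + \mu h^* q^{d-1}.
\]
The formula for $\phi_i$ is obtained analogously from condition (iv), using $\th_{d-i+1} - \th_0 = (q^{d-i+1}-1)(\mu - h q^{i-1})$; the $i$-independent part of the resulting expression is $\tau' = \vphi_1/((q-1)(q^d-1)) + \mu\mu^* + hh^* q^{d-1}$.

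The last thing to check is that $\tau = \tau'$, so that a single $\tau$ appears in both closed forms. This reduces to the identity $\phi_1 - \vphi_1 = (q-1)(q^d-1)(\mu-h)(\mu^* - h^* q^{d-1})$, whose right-hand side equals $(\th^*_0 - \th^*_1)(\th_0 - \th_d)$ by the same factoring as above; and setting $i=1$ in either (iii) or (iv) gives $\phi_1 - \vphi_1 = (\th^*_0 - \th^*_1)(\th_0 - \th_d)$ automatically. The main obstacle here is purely clerical: matching the scattered $q$-monomial terms across the two expansions and verifying that the mixed cross term contributing $q^{d-1}$ assembles into the same $\tau$ in both cases.
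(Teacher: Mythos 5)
Your proof is correct; the paper itself gives no argument for this lemma (it is quoted from \cite{NT:affine}), and your derivation—solving the third-order recurrence coming from Lemma \ref{lem:classify}(v) with characteristic roots $1,q,q^{-1}$ (distinct precisely because $\beta\neq\pm 2$), substituting the resulting closed forms into conditions (iii) and (iv), and reconciling the two candidate constants $\tau,\tau'$ via the $i=1$ instance of those conditions—is exactly the standard route taken in the cited source. The one step that genuinely needed checking, namely $\tau=\tau'$, you handle correctly by reducing it to $\phi_1-\vphi_1=(\th^*_0-\th^*_1)(\th_0-\th_d)$.
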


\begin{note}   \label{note:I}    \samepage
\ifDRAFT {\rm note:I}. \fi
With reference to Lemma \ref{lem:typeIclosed},
for $1 \leq i \leq d$ we have $q^i \neq 1$; otherwise $\vphi_i=0$.
\end{note}

\begin{lemma}  {\rm (See \cite[Lemma 14.1]{NT:affine}.) }  \label{lem:typeIIclosed}   \samepage
\ifDRAFT {\rm lem:typeIIclosed}. \fi
Assume the parameter array \eqref{eq:parray} has type II.
Then there exist scalars $\eta$, $h$, $\mu$, $\eta^*$, $h^*$, $\mu^*$, $\tau$ in $\F$
such that
\begin{align*}
 \th_i &= \eta + \mu (i-d/2) + h i(d-i),
\\
 \th^*_i &= \eta^* + \mu^* (i-d/2) + h^* i(d-i)
\intertext{for $0 \leq i \leq d$, and}
 \vphi_i &= i(d-i+1)(\tau- \mu\mu^*/2+(h \mu^* + \mu h^*)(i-(d+1)/2)+h h^*(i-1)(d-i)),
\\
 \phi_i &= i(d-i+1)(\tau+ \mu\mu^*/2+(h \mu^* - \mu h^*)(i-(d+1)/2)+h h^*(i-1)(d-i))
\end{align*}
for $1 \leq i \leq d$.
\end{lemma}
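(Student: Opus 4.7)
The plan is to parameterize $\{\th_i\}_{i=0}^d$ and $\{\th^*_i\}_{i=0}^d$ as polynomial functions of $i$, and then to derive the asserted formulas for $\vphi_i$ and $\phi_i$ from conditions (iii) and (iv) of Lemma~\ref{lem:classify}.

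First I would use condition (v) of Lemma~\ref{lem:classify} together with $\beta=2$ to obtain the linear recurrence
\[
\th_{i-2} - 3\th_{i-1} + 3\th_i - \th_{i+1} = 0 \qquad (2 \leq i \leq d-1),
\]
whose characteristic polynomial $(x-1)^3$ has $1$ as a triple root. Since $\text{Char}(\F)\neq 2$, the solution space is three-dimensional and consists of polynomials in $i$ of degree at most two. The three functions $1$, $i-d/2$, $i(d-i)$ have pairwise distinct degrees and so form a basis, producing scalars $\eta,\mu,h$ with $\th_i = \eta + \mu(i-d/2) + h\cdot i(d-i)$; note that $\th_0 \neq \th_d$ forces $\mu d \neq 0$ in $\F$. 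The identical argument applied to $\{\th^*_i\}$ yields $\eta^*,\mu^*,h^*$ with the stated closed form.

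Next I would compute $\vphi_i$ using Lemma~\ref{lem:classify}(iii). A routine calculation gives $\th_\ell - \th_{d-\ell} = \mu(2\ell - d)$, and hence $\sum_{\ell=0}^{i-1}(\th_\ell - \th_{d-\ell})/(\th_0 - \th_d) = i(d-i+1)/d$. Substituting this, together with the factorizations
\[
\th_{i-1}-\th_d = (d-i+1)\bigl(h(i-1)-\mu\bigr), \qquad \th^*_i - \th^*_0 = i\bigl(\mu^* + h^*(d-i)\bigr),
\]
into condition (iii) yields $\vphi_i = i(d-i+1)P(i)$ for a quadratic polynomial $P(i)$. Setting
\[
\tau := \frac{\phi_1}{d} - \frac{\mu\mu^*}{2} + \frac{d-1}{2}(h\mu^* - \mu h^*),
\]
one checks by expansion that $P(i)$ equals the bracket claimed in the formula for $\vphi_i$.

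Finally I would apply the same procedure to $\phi_i$ via Lemma~\ref{lem:classify}(iv). The main technical point, and the only genuine obstacle, is that the single scalar $\tau$ above must also reproduce the asserted formula for $\phi_i$. This compatibility amounts to the linear identity
\[
\phi_1 - \vphi_1 = d\mu\mu^* + d(d-1)\mu h^*,
\]
which follows by specializing condition (iii) to $i=1$ and substituting the closed forms for $\th^*_1 - \th^*_0$ and $\th_0 - \th_d$. Once this identity is in hand, the verification of the $\phi_i$ formula is a short expansion, entirely parallel to the $\vphi_i$ computation but using $\th_{d-i+1}-\th_0 = (d-i+1)(\mu + h(i-1))$ in place of $\th_{i-1}-\th_d$. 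The argument could alternatively be presented as a $q\to 1$ limit of Lemma~\ref{lem:typeIclosed}, but the direct calculation outlined above avoids the need to justify the limiting procedure.
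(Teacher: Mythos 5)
Your proof is correct. Note that the paper itself gives no argument for this lemma: it is quoted verbatim from \cite[Lemma~14.1]{NT:affine}, so there is no in-paper proof to compare against. Your reconstruction is the natural one and it checks out: condition (v) with $\beta=2$ gives the recurrence $\th_{i+1}-3\th_i+3\th_{i-1}-\th_{i-2}=0$ with characteristic polynomial $(x-1)^3$, and since $1$, $i$, $i^2$ are three solutions that are linearly independent as sequences when $\mathrm{Char}(\F)\neq 2$ (the relevant determinant at $i=0,1,2$ equals $2$), the eigenvalue sequences are quadratic polynomials in $i$, expressible in the basis $1$, $i-d/2$, $i(d-i)$. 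Your factorizations $\th_{i-1}-\th_d=(d-i+1)(h(i-1)-\mu)$, $\th_{d-i+1}-\th_0=(d-i+1)(\mu+h(i-1))$, $\th^*_i-\th^*_0=i(\mu^*+h^*(d-i))$ are right, and with $\tau=\phi_1/d-\mu\mu^*/2+\tfrac{d-1}{2}(h\mu^*-\mu h^*)$ the bracket from condition (iii) matches the asserted one. You also correctly isolate the one nontrivial point: that the same $\tau$ works in the $\phi_i$ formula, which reduces to $\phi_1-\vphi_1=d\mu\mu^*+d(d-1)\mu h^*$; this indeed follows from condition (iii) at $i=1$, since $\phi_1-\vphi_1=-(\th^*_1-\th^*_0)(\th_0-\th_d)=(\mu^*+h^*(d-1))\mu d$. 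I verified all of these computations.
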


\begin{note}   \label{note:II}   \samepage
\ifDRAFT {\rm note:II}. \fi
With reference to Lemma \ref{lem:typeIIclosed},
$\text{Char}(\F) \neq i$ for any prime $i \leq d$;
otherwise $\vphi_i = 0$.
\end{note}

\begin{lemma}  {\rm (See \cite[Lemma 15.1]{NT:affine}.) }  \label{lem:typeIII+closed}  \samepage
\ifDRAFT {\rm lem:typeIII+closed}. \fi
Assume the parameter array \eqref{eq:parray} has type III$^+$.
Then there exist scalars $\eta$, $h$, $s$, $\eta^*$, $h^*$, $s^*$, $\tau$ in $\F$
such that
\begin{align*}
 \th_i &=
   \begin{cases}
       \eta + s + h (i-d/2) & \text{ if $i$ is even},
   \\
       \eta - s - h(i-d/2) & \text{ if $i$ is odd},
   \end{cases}
\\
 \th^*_i &=
   \begin{cases}
     \eta^* + s^* + h^* (i-d/2) & \text{ if $i$ is even},
   \\
      \eta^* - s^*  - h^* (i-d/2) & \text{ if $i$ is odd}
   \end{cases}
\intertext{for $0 \leq i \leq d$, and}
 \vphi_i &=
  \begin{cases}
    i \big( \tau-s h^* - s^* h - h h^* (i-(d+1)/2) \big) & \text{ if $i$ is even},
  \\
   (d-i+1) \big( \tau + s h^* + s^* h + h h^* (i-(d+1)/2) \big) & \text{ if $i$ is odd},
  \end{cases}
\\
 \phi_i &=
  \begin{cases}
    i \big( \tau-s h^* + s^* h + h h^* (i-(d+1)/2) \big) & \text{ if $i$ is even},
  \\
   (d-i+1) \big( \tau + s h^* - s^* h - h h^* (i-(d+1)/2) \big) & \text{ if $i$ is odd}
  \end{cases}
\end{align*}
for $1 \leq i \leq d$.
\end{lemma}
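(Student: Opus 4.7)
The plan is to parallel the derivations of Lemmas \ref{lem:typeIclosed} and \ref{lem:typeIIclosed}: first solve the three-term recurrence for $\{\th_i\}$ and $\{\th^*_i\}$ coming from Lemma \ref{lem:classify}(v), and then use parts (iii) and (iv) of the same lemma to translate the remaining free parameter into $\tau$ and read off $\{\vphi_i\}$ and $\{\phi_i\}$. The type I ansatz $\th_i=\eta+\mu q^i+h q^{d-i}$ degenerates here because $\beta=-2$ forces $q=-1$, so $q^i$ only takes two values; the piecewise-linear ansatz of the present lemma is what replaces it.

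First I would specialize condition (v). With $\beta+1=-1$, the identity $\th_{i-2}-\th_{i+1}=-(\th_{i-1}-\th_i)$ rearranges to $\th_i+\th_{i+1}=\th_{i-2}+\th_{i-1}$ for $2\le i\le d-1$. Iterating by parity, the subsequences $\{\th_{2k}\}$ and $\{\th_{2k+1}\}$ are each arithmetic progressions, with opposite common differences $\pm(\th_2-\th_0)$. This is a three-parameter family; setting $h=(\th_2-\th_0)/2$ and solving the two linear equations $\th_0+\th_1=2\eta-h$, $\th_0-\th_1=2s-h(d-1)$ for $\eta$ and $s$ (which is where we need $\text{Char}(\F)\neq 2$) yields the stated piecewise-linear formula for $\th_i$. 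The same argument applied to $\{\th^*_i\}$ produces $\eta^*, s^*, h^*$.

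Next I would plug these closed forms into Lemma \ref{lem:classify}(iii), $\vphi_i=\phi_1\sum_{\ell=0}^{i-1}(\th_\ell-\th_{d-\ell})/(\th_0-\th_d)+(\th^*_i-\th^*_0)(\th_{i-1}-\th_d)$, and define $\tau$ as the affine reparameterization of $\phi_1$ that makes the formula match the claim. Because $d$ is even, $\th_\ell$ and $\th_{d-\ell}$ share parity, so $\th_\ell-\th_{d-\ell}$ is a constant multiple of $h(d-2\ell)$ and the sum collapses to a closed form whose natural prefactor is $i$ (when $i$ is even) or $d-i+1$ (when $i$ is odd). The analogous computation using part (iv) produces $\phi_i$; passing from (iii) to (iv) exchanges $\phi_1\leftrightarrow\vphi_1$ and $\th_\ell\leftrightarrow\th_{d-\ell}$, which accounts for the sign flip of $s^*h$ between the two formulas.

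The main obstacle I expect is bookkeeping rather than anything conceptual: the piecewise form of $\th_i$ forces one to split $\sum_{\ell=0}^{i-1}(\th_\ell-\th_{d-\ell})$ according to parity, and the mixed terms between $(s,h)$ and $(s^*,h^*)$ have to be consolidated carefully to land in the compact form $\tau\pm sh^*\pm s^*h\pm hh^*(i-(d+1)/2)$. A convenient shortcut is to pin down $\tau$ by matching the formula at $i=1$ against the value of $\vphi_1$ predicted by (iii), and then verify the identity for general $i$ as a polynomial relation in $(\eta,s,h,\eta^*,s^*,h^*,\tau)$, using the evenness of $d$ to keep the parity arithmetic closed.
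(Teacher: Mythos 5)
Your argument is sound, but note that the paper does not actually prove this lemma: it is quoted from \cite[Lemma 15.1]{NT:affine}, so there is no internal proof to compare against, and what you have written amounts to a legitimate self-contained derivation from Lemma \ref{lem:classify}. The first half is correct: with $\beta+1=-1$, condition (v) gives $\th_{i-2}+\th_{i-1}=\th_{i}+\th_{i+1}$, so the pair sums $\th_j+\th_{j+1}$ depend only on the parity of $j$, whence the even- and odd-indexed subsequences are arithmetic with opposite common differences (equivalently, the characteristic polynomial of the recurrence is $(x-1)(x+1)^2$), and $\text{Char}(\F)\neq 2$ is exactly what lets you solve for $\eta,s$ and $\eta^*,s^*$. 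The second half also works; I checked the computation. With $d$ even one gets $\sum_{\ell=0}^{i-1}(\th_\ell-\th_{d-\ell})/(\th_0-\th_d)$ equal to $i/d$ or $(d-i+1)/d$ according to parity, and substituting the closed forms into (iii) yields $\vphi_i=i\bigl(\phi_1/d-2sh^*-hh^*(i-1)\bigr)$ for $i$ even and $\vphi_i=(d-i+1)\bigl(\phi_1/d+2s^*h-hh^*(d-i)\bigr)$ for $i$ odd, both of which reduce to the stated form under the single choice $\tau=\phi_1/d-sh^*+s^*h-hh^*(d-1)/2$. The one point you should make explicit rather than leave as ``bookkeeping'' is precisely this consistency: that one value of $\tau$ works simultaneously for both parity branches of $\vphi_i$, and again for both branches of $\phi_i$ via (iv) together with the relation $\vphi_1=\phi_1+(\th^*_1-\th^*_0)(\th_0-\th_d)$ obtained from (iii) at $i=1$. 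It does work out, but it is the crux of the verification rather than a routine consolidation of terms, and a sign slip in $\th^*_i-\th^*_0$ for odd $i$ (it equals $-2s^*+h^*(d-i)$, not $-2s^*-h^*i$) will make the two branches appear to demand different values of $\tau$.
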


\begin{note}   \label{note:III+}   \samepage
\ifDRAFT {\rm note:III+}. \fi
With reference to Lemma \ref{lem:typeIII+closed},
 $\text{Char}(\F) \neq i$ for any prime $i \leq d/2$;
otherwise $\vphi_{2i} = 0$.
By this and since $\text{Char}(\F) \neq 2$ we find $\text{Char}(\F)$ is either $0$ or
an odd prime greater than $d/2$.
Observe that neither of $d$, $d-2$ vanish in $\F$; otherwise $\text{Char}(\F)$ must divide $d/2$
or $(d-2)/2$.
\end{note}

\begin{lemma}  {\rm (See \cite[Lemma 16.1]{NT:affine}.) }  \label{lem:typeIII-closed}  \samepage
\ifDRAFT {\rm lem:typeIII-closed}. \fi
Assume the parameter array \eqref{eq:parray} has type III$^-$.
Then there exist scalars $\eta$, $h$, $s$, $\eta^*$, $h^*$, $s^*$, $\tau$ in $\F$
such that
\begin{align*}
 \th_i &=
   \begin{cases}
       \eta + s + h (i-d/2) & \text{ if $i$ is even},
   \\
       \eta - s - h(i-d/2) & \text{ if $i$ is odd},
   \end{cases}
\\ 
 \th^*_i &=
   \begin{cases}
     \eta^* + s^* + h^* (i-d/2) & \text{ if $i$ is even},
   \\
      \eta^* - s^* - h^* (i-d/2) & \text{ if $i$ is odd}
   \end{cases}
\intertext{for $0 \leq i \leq d$, and}
 \vphi_i &=
  \begin{cases}
   h h^* i (d-i+1) & \text{ if $i$ is even},
  \\
   \tau - 2 s s^* + i (d-i+1)h h^* - 2(h s^* + h^* s) \big( i - (d+1)/2 \big)
                                             & \text{ if $i$ is odd},
  \end{cases}
\\
 \phi_i &=
  \begin{cases}
      h h^* i (d-i+1) & \text{ if $i$ is even},
  \\
   \tau + 2 s s^* + i (d-i+1)h h^* - 2(h s^* - h^* s) \big( i - (d+1)/2 \big)
                                             & \text{ if $i$ is odd}
  \end{cases}
\end{align*}
for $1 \leq i \leq d$.
\end{lemma}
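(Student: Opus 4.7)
The plan is to exploit the recurrence implicit in condition (v) of Lemma~\ref{lem:classify} to pin down the shape of $\{\th_i\}_{i=0}^d$ and $\{\th^*_i\}_{i=0}^d$, and then to use condition (iii) together with the $D_4$ symmetry of Lemma~\ref{lem:D4} to derive closed forms for $\{\vphi_i\}$ and $\{\phi_i\}$. Since $\beta = -2$, condition (v) rearranges to the order-three linear recurrence $\th_{i+1} = \th_{i-2} + \th_{i-1} - \th_i$, valid for $2 \leq i \leq d-1$, whose characteristic polynomial factors as $(x-1)(x+1)^2$. Hence the general solution on $0 \leq i \leq d$ has the form $\th_i = A + (-1)^i(B + Ci)$, and the hypothesis $\text{Char}(\F) \neq 2$ (so that $d/2$ makes sense in $\F$) lets us reparameterize by $\eta := A$, $h := C$, $s := B + hd/2$, yielding the stated expression; the same argument applied to the starred sequence produces the parallel formulas for $\th^*_i$.

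With the eigenvalue formulas in hand, I would invoke condition (iii) of Lemma~\ref{lem:classify}, which writes $\vphi_i = \phi_1 S_i + (\th^*_i - \th^*_0)(\th_{i-1} - \th_d)$ with $S_i = \sum_{\ell=0}^{i-1}(\th_\ell - \th_{d-\ell})/(\th_0 - \th_d)$. Because $d$ is odd, $\ell$ and $d-\ell$ have opposite parities, so a short computation using the closed forms for $\th_i$ gives $\th_\ell - \th_{d-\ell} = 2s(-1)^\ell$ and $\th_0 - \th_d = 2s$; thus $S_i = \sum_{\ell=0}^{i-1}(-1)^\ell$, which is $0$ for $i$ even and $1$ for $i$ odd. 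In the even case the cross term $(\th^*_i - \th^*_0)(\th_{i-1} - \th_d)$ collapses to $h^* i \cdot h(d-i+1) = hh^* i(d-i+1)$, because both index pairs share the same parity and the $s,s^*$ contributions do not appear. In the odd case the cross term expands to a quadratic in $i$, and introducing $\tau$ by the defining relation $\vphi_1 = \tau - 2ss^* + dhh^* + (d-1)(hs^* + h^*s)$ then forces the entire expression into the stated form, as one verifies by comparing the coefficients of $1$, $i$, and $i(d-i+1)$.

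For the $\phi_i$ formulas I would avoid a duplicate calculation by applying the symmetry $\Downarrow$ of Lemma~\ref{lem:D4}: this operation preserves the type III$^-$ structure, leaves $(\eta^*, s^*, h^*)$ fixed, replaces $(\eta, s, h)$ by $(\eta, -s, h)$ (as read off from the $\th_i$ formula by substituting $i \to d-i$), and carries $\vphi_i$ to $\phi_i$. Substituting $s \to -s$ in the $\vphi_i$ formulas then produces the $\phi_i$ formulas with the same $\tau$, since $hh^*$, $ss^*$, and $\tau$ itself are $\Downarrow$-invariant; consistency of $\tau$ reduces to the identity $\phi_1 - \vphi_1 = -(\th^*_1 - \th^*_0)(\th_0 - \th_d)$, which follows by evaluating (iii) and (iv) at $i=1$ and subtracting. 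The main obstacle is the bookkeeping in the odd-$i$ expansion of the second paragraph, where one must verify that the mixed terms in $s, s^*, h, h^*, i$ really do consolidate into $-2(hs^* + h^*s)(i - (d+1)/2)$; this is the one step where dividing by $2$ is essential, matching the hypothesis $\text{Char}(\F) \neq 2$.
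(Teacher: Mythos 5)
Your argument is essentially correct, but note that the paper does not prove this statement at all: it is imported verbatim as \cite[Lemma 16.1]{NT:affine}, so there is no internal proof to compare against. What you have written is a self-contained derivation from Lemma \ref{lem:classify}, and it holds up. The recurrence from condition (v) with $\beta=-2$ has characteristic polynomial $(x-1)(x+1)^2$, and since $\text{Char}(\F)\neq 2$ the functions $1$, $(-1)^i$, $i(-1)^i$ are linearly independent on $i=0,1,2$, so $\th_i=A+(-1)^i(B+Ci)$ and the reparameterization $s=B+hd/2$ is legitimate; the same applies to the starred sequence. Your evaluation $S_i=\sum_{\ell=0}^{i-1}(-1)^\ell$ agrees with Lemma \ref{lem:vth}(iv), the even-$i$ cross term does collapse to $hh^*i(d-i+1)$, and in the odd-$i$ case the discrepancy between $\phi_1+(\th^*_i-\th^*_0)(\th_{i-1}-\th_d)$ and the target is the $i$-independent quantity $\phi_1-2ss^*-dhh^*+(d-1)(h^*s-hs^*)-\tau$, so setting $\tau$ by the $i=1$ relation (equivalently, $\tau=\phi_1-2ss^*-dhh^*+(d-1)(h^*s-hs^*)$) makes the formula hold for all odd $i$ --- I checked the quadratic and linear coefficients in $i$ match, which is the one point you flagged as delicate. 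The $\Downarrow$ trick for $\phi_i$ is also sound: $\Downarrow$ sends $(\eta,s,h)\mapsto(\eta,-s,h)$ and fixes the starred parameters, and the identity $\vphi_1-\phi_1=(\th^*_1-\th^*_0)(\th_0-\th_d)$ (condition (iii) at $i=1$) is exactly what is needed to show $\tau^{\Downarrow}=\tau$. The only cosmetic slip is the claim that this identity comes from ``subtracting (iii) and (iv) at $i=1$''; (iii) at $i=1$ alone already gives it. So your proof is a valid elementary replacement for the external citation.
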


\begin{note}   \label{note:III-}   \samepage
\ifDRAFT {\rm note:III-}. \fi
With reference to Lemma \ref{lem:typeIII-closed},
$\text{Char}(\F) \neq i$ for any prime $i \leq d/2$;
otherwise $\vphi_{2i} = 0$.
By this and since $\text{Char}(\F) \neq 2$ we find $\text{Char}(\F)$ is either $0$ or
an odd prime greater than $d/2$.
Observe $d-1$ does not vanish in $\F$; otherwise $\text{Char}(\F)$ must divide $(d-1)/2$.
\end{note}

\begin{lemma}  {\rm (See \cite[Lemma 17.1]{NT:affine}.) }  \label{lem:typeIVclosed}  \samepage
\ifDRAFT {\rm lem:typeIVclosed}. \fi
Assume the parameter array \eqref{eq:parray} has type IV.
Then $d=3$,
and there exist scalars $h$, $s$, $h^*$, $s^*$, $r$ in $\F$
such that
\begin{align*}
 \th_1 &= \th_0 + h(s+1),  &
 \th_2 &= \th_0 + h, &
 \th_3 &= \th_0 + h s,
\\
 \th^*_1 &= \th^*_0 + h^*(s^* + 1), &
 \th^*_2 &= \th^*_0 + h^*, &
 \th^*_3 &= \th^*_0 + h^* s^*,
\\
 \vphi_1 &= h h^* r, &
 \vphi_2 &= h h^*, &
 \vphi_3 &= h h^* (r+s+s^*),
\\
 \phi_1 &= h h^* (r + s + s s^*) , &
 \phi_2 &= h h^*, &
 \phi_3 &= h h^* (r + s^* + s s^*).
\end{align*}
\end{lemma}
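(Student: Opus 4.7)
The plan is to first pin down $d$ and then parametrize the array. Condition (v) of Lemma \ref{lem:classify} with $\beta = 0$ reads $\th_{i-2} - \th_{i+1} = \th_{i-1} - \th_i$ for $2 \leq i \leq d-1$; since $\text{Char}(\F) = 2$ this rearranges to $\th_{i-2} + \th_{i-1} + \th_i + \th_{i+1} = 0$. If $d \geq 4$, writing this relation at $i = 2$ and $i = 3$ and adding yields $\th_0 = \th_4$, violating the distinctness of the eigenvalues in condition (i). Hence $d = 3$, and the identical analysis applied to $\{\th^*_i\}$ yields no further restriction.

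Next I would introduce the scalars $h, s, h^*, s^*$ appearing in the eigenvalue formulas. Since $\th_0 \neq \th_2$, set $h := \th_2 - \th_0 \neq 0$, and define $s \in \F$ by $\th_3 = \th_0 + h s$. The single recurrence instance $\th_0 + \th_1 + \th_2 + \th_3 = 0$ then forces $\th_1 = \th_0 + h(s+1)$, matching the stated form. Repeating the construction on $\{\th^*_i\}$ produces $h^*, s^*$ with the desired shape.

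For the remaining parameters, define $r \in \F$ by $\vphi_1 = h h^* r$, which is legitimate because $h h^* \neq 0$. Apply the linear relations (iii) and (iv) of Lemma \ref{lem:classify} with $d = 3$: the partial sum $\sum_{\ell=0}^{i-1}(\th_\ell - \th_{3-\ell})/(\th_0 - \th_3)$ equals $1$ when $i \in \{1,3\}$ and $0$ when $i = 2$, because $\th_0 - \th_3 = hs = \th_1 - \th_2$ and two equal terms cancel in characteristic $2$. Substituting these values together with the already-derived expressions for $\th_i - \th_0$ and $\th^*_i - \th^*_0$ into the right-hand sides of (iii) and (iv) and simplifying yields $\vphi_2 = h h^* = \phi_2$ directly, and the asserted expressions for $\phi_1$, $\vphi_3$, $\phi_3$ in terms of $r, s, s^*$ after routine cancellation in characteristic $2$.

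The one slightly delicate step is the final bookkeeping, where several terms collapse due to $\text{Char}(\F) = 2$; the main conceptual insight is the short parity argument forcing $d = 3$. The scheme parallels the other closed-form lemmas in this section and follows \cite[Lemma 17.1]{NT:affine}.
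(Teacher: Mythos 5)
Your derivation is correct, and every computation checks out: condition (v) of Lemma \ref{lem:classify} with $\beta=0$ in characteristic $2$ gives $\th_{i-2}+\th_{i-1}+\th_i+\th_{i+1}=0$, summing the instances $i=2,3$ forces $\th_0=\th_4$ when $d\geq 4$, and the remaining identities for $\vphi_2,\vphi_3,\phi_1,\phi_2,\phi_3$ follow from (iii), (iv) exactly as you say (I verified the characteristic-$2$ cancellations, e.g.\ $\vphi_3=\phi_1+h h^* s^*(1+s)=h h^*(r+s+s^*)$). Note, however, that the paper offers no proof of this statement at all: it is imported verbatim from \cite[Lemma 17.1]{NT:affine}, so there is nothing internal to compare against. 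Your argument is therefore a genuinely different contribution --- a short, self-contained derivation from the parameter-array axioms that works because the type IV hypotheses collapse everything to $d=3$ and a handful of linear equations. The cited source proves the closed forms for all types in a uniform framework; your route buys transparency in this one degenerate case at no real cost, though for a referee the only thing to confirm is that your starting point (conditions (i)--(v)) is exactly the characterization in Lemma \ref{lem:classify}, which it is.
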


We mention a lemma for later use.
Pick a nonzero $q \in \F$ such that $\beta = q+q^{-1}$.

\begin{lemma}   {\rm (See \cite[Lemma 10.2]{T:Leonard}.)}  \label{lem:vth} \samepage
\ifDRAFT {\rm lem:vth}. \fi
The following hold:
\begin{itemize}
\item[\rm (i)]
Assume the parameter array \eqref{eq:parray} has type I. Then for $1 \leq i \leq d$
\[
 \sum_{\ell=0}^{i-1} \frac{\th_{\ell} - \th_{d- \ell}}{\th_0 - \th_d} 
 = \frac{(q^i-1)(q^{d-i+1}-1)}{(q-1)(q^d-1)}.
\]
\item[\rm (ii)]
Assume the parameter array \eqref{eq:parray}  has type II.  Then for $1 \leq i \leq d$
\[
  \sum_{\ell=0}^{i-1} \frac{\th_{\ell} - \th_{d- \ell}}{\th_0 - \th_d}
  = \frac{i(d-i+1)}{d}.
\]
\item[\rm (iii)]
Assume  the parameter array  \eqref{eq:parray}  has type III$^+$. Then for $1 \leq i \leq d$
\[
  \sum_{\ell=0}^{i-1} \frac{\th_{\ell} - \th_{d- \ell}}{\th_0 - \th_d}
 = 
  \begin{cases}
    i/d & \text{ if $i$ is even},
  \\
   (d-i+1)/d & \text{ if $i$ is odd}.
 \end{cases}
\]
\item[\rm (iv)]
Assume the parameter array \eqref{eq:parray}  has type III$^-$.  Then for $1 \leq i \leq d$
\[
 \sum_{\ell=0}^{i-1} \frac{\th_{\ell} - \th_{d- \ell}}{\th_0 - \th_d}
  = 
  \begin{cases}
    0 & \text{ if $i$ is even},
  \\
   1 & \text{ if $i$ is odd}.
 \end{cases}
\]
\item[\rm (iii)]
Assume the parameter array  \eqref{eq:parray} has type IV. Then for $1 \leq i \leq d$
\[
  \sum_{\ell=0}^{i-1} \frac{\th_{\ell} - \th_{d- \ell}}{\th_0 - \th_d}
 =
  \begin{cases}
    0 & \text{ if $i$ is even},
  \\
   1 & \text{ if $i$ is odd}.
 \end{cases}
\]
\end{itemize}
\end{lemma}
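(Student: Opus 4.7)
The plan is to verify each of the five cases of Lemma~\ref{lem:vth} by direct substitution of the closed-form expressions for $\th_i$ from Lemmas~\ref{lem:typeIclosed}--\ref{lem:typeIVclosed}, followed by explicit evaluation of the sum. In every case the key observation is that only a single ``shape parameter'' of $\th_i$ contributes to the difference $\th_\ell - \th_{d-\ell}$, so that the extraneous scalars (in particular $\eta$ and the piece of $\th_i$ which is symmetric under $i \mapsto d-i$) cancel between numerator and denominator. Consequently the ratio $(\th_\ell - \th_{d-\ell})/(\th_0 - \th_d)$ becomes a universal closed expression in $\ell$, $i$, $d$ (and $q$, in type~I), and the target identities reduce to elementary summation.

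For type~I, writing $\th_i = \eta + \mu q^i + h q^{d-i}$ gives $\th_\ell - \th_{d-\ell} = (\mu - h)(q^\ell - q^{d-\ell})$ and $\th_0 - \th_d = (\mu - h)(1 - q^d)$, so the factor $\mu - h$ drops out. The sum $\sum_{\ell=0}^{i-1}(q^\ell - q^{d-\ell})$ is then evaluated by two geometric series and factors as $(q^i-1)(1-q^{d-i+1})/(q-1)$; dividing by $1 - q^d$ yields (i). For type~II, the $h\,i(d-i)$ piece of $\th_i$ is symmetric under $i \mapsto d - i$, so $\th_\ell - \th_{d-\ell} = \mu(2\ell - d)$ and $\th_0 - \th_d = -\mu d$; the ratio is $(d-2\ell)/d$, and the arithmetic identity $\sum_{\ell=0}^{i-1}(d-2\ell) = i(d-i+1)$ produces (ii).

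Types~III$^+$, III$^-$, and IV require splitting according to the parity of $\ell$. In type~III$^-$ (with $d$ odd) the indices $\ell$ and $d-\ell$ have opposite parity, so the alternating $s$-terms do not cancel, whereas the $h$-contribution does; a short calculation gives $\th_\ell - \th_{d-\ell} = 2s$ for $\ell$ even and $-2s$ for $\ell$ odd, so the ratio is $\pm 1$ and summing over $\ell = 0, \dots, i-1$ yields $0$ or $1$ according to the parity of $i$, matching (iv). In type~III$^+$ (with $d$ even), the parities of $\ell$ and $d-\ell$ agree, the $s$-term cancels, and only the $h$-term contributes; computing the even-$\ell$ and odd-$\ell$ partial sums separately and combining them recovers $i/d$ for $i$ even and $(d-i+1)/d$ for $i$ odd, giving (iii). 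Type~IV, where necessarily $d = 3$ and $\text{Char}(\F) = 2$, is verified by inspection from the three explicit values in Lemma~\ref{lem:typeIVclosed}.

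The main obstacle is the bookkeeping in types~III$^{\pm}$: one must track carefully which branch of the case-defined $\th_i$ applies to each of $\th_\ell$ and $\th_{d-\ell}$, and the cancellations differ depending on whether $d$ is even or odd. The rest of the work is routine summation. Conceptually a unified point of view is available: the partial sum $\sum_{\ell=0}^{i-1}(\th_\ell - \th_{d-\ell})/(\th_0 - \th_d)$ must coincide with the first factor appearing in $\vphi_i$ divided by the same factor in $\vphi_1$, as displayed in Lemmas~\ref{lem:typeIclosed}--\ref{lem:typeIVclosed}, which suggests the target factorization in each case and organizes the verification.
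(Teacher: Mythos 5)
Your proposal is correct, and all of the computations check out: in type I the factor $\mu-h$ cancels and the two geometric series give $(q^i-1)(q^{d-i+1}-1)/((q-1)(q^d-1))$; in type II the symmetric piece $h\,i(d-i)$ drops and $\sum_{\ell=0}^{i-1}(d-2\ell)=i(d-i+1)$; in types III$^{\pm}$ the parity bookkeeping you describe yields exactly the stated case distinctions (for III$^+$ the $s$-terms cancel and the alternating sum $\sum_{\ell=0}^{i-1}(-1)^\ell(d-2\ell)$ equals $i$ or $d-i+1$; for III$^-$ each term is $\pm 2s$ and the ratio is $(-1)^\ell$); and type IV is a three-line check in characteristic $2$. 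Note, however, that the paper does not prove this lemma at all: it is quoted from \cite[Lemma 10.2]{T:Leonard}, where it is established directly from the recurrence defining $\beta$ rather than from the closed forms of Lemmas \ref{lem:typeIclosed}--\ref{lem:typeIVclosed}. Your route is therefore a self-contained verification internal to this paper's toolkit, at the cost of a case split over the five types; the cited argument is type-uniform but requires the machinery of \cite{T:Leonard}. One small housekeeping point worth making explicit in your write-up: the cancellations in types I--III require $\mu\neq h$, $\mu\neq 0$, $h\neq 0$, or $s\neq 0$ as appropriate, but these all follow automatically from $\th_0\neq\th_d$, so nothing is actually at risk.
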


\section{Proof of Theorem \ref{thm:main1}}
\label{sec:proof1} 

In this section we prove Theorem \ref{thm:main1}.
Fix an integer $d \geq 3$.
Let \eqref{eq:parray} be a parameter array over $\F$ with fundamental parameter $\beta$.
Pick a nonzero $q \in \F$ such that $\beta = q+q^{-1}$.
In the following five lemmas,
we display formulas that represent $\{\th_i\}_{i=0}^d$ and $\{\th^*_i\}_{i=0}^d$
in terms of the end-parameters and $q$.
These formulas can be routinely verified using
Lemmas \ref{lem:typeIclosed}, \ref{lem:typeIIclosed}, \ref{lem:typeIII+closed},
\ref{lem:typeIII-closed}, \ref{lem:typeIVclosed}.

\begin{lemma}   \label{lem:typeIth}   \samepage
\ifDRAFT {\rm lem:typeIth}. \fi
Assume the parameter array \eqref{eq:parray} has type I.
Then for $0 \leq i \leq d$
\begin{align*}
 \th_i
 &= \th_0
    -  \frac{(q^i-1)(q^{2d-i-1}-1)(\th_0-\th_d)}
             {(q^{d-1}-1)(q^{d}-1)}
    + \frac{(q^i-1)(q^{d-i}-1)(\phi_1-\vphi_d)}
             {(q-1)(q^{d-1}-1)(\th^*_0 - \th^*_d)},
\\
 \th^*_i
 &=  \th^*_0 
    -  \frac{(q^i-1)(q^{2d-i-1}-1)(\th^*_0-\th^*_d)}
             {(q^{d-1}-1)(q^{d}-1)}
    + \frac{(q^i-1)(q^{d-i}-1)(\phi_d-\vphi_d)}
             {(q-1)(q^{d-1}-1)(\th_0 - \th_d)}. 
\end{align*}
\end{lemma}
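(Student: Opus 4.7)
The plan is to derive both formulas by direct substitution using Lemma \ref{lem:typeIclosed}, which provides closed-form expressions for $\{\th_i\}$, $\{\th^*_i\}$, $\{\vphi_i\}$, $\{\phi_i\}$ in terms of scalars $\eta, h, \mu, \eta^*, h^*, \mu^*, \tau$ and $q$. The strategy is to rewrite the end-parameter combinations occurring on the claimed right-hand sides as simple polynomials in $h,\mu$ (respectively $h^*, \mu^*$), and then verify what remains as a polynomial identity in $q$.

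First I would compute
\[
\th_0 - \th_d = (h-\mu)(q^d-1), \qquad \th^*_0 - \th^*_d = (h^*-\mu^*)(q^d-1),
\]
and then the key differences
\begin{align*}
\phi_1 - \vphi_d &= (q-1)(q^d-1)(h - \mu q^{d-1})(h^* - \mu^*),\\
\phi_d - \vphi_d &= (q-1)(q^d-1)(\mu - h)(\mu^* q^{d-1} - h^*),
\end{align*}
in which the $\tau$ terms cancel. Dividing by $\th^*_0 - \th^*_d$ and $\th_0 - \th_d$ respectively eliminates the starred or unstarred factor cleanly, yielding
\[
\frac{\phi_1 - \vphi_d}{(q-1)(\th^*_0 - \th^*_d)} = h - \mu q^{d-1}, \qquad \frac{\phi_d - \vphi_d}{(q-1)(\th_0 - \th_d)} = h^* - \mu^* q^{d-1}.
\]

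With these identifications in hand, and using $\th_i - \th_0 = (q^i - 1)(\mu - h q^{d-i})$ (immediate from Lemma \ref{lem:typeIclosed}), the first claimed identity collapses to the polynomial identity
\[
(q^{d-1}-1)(\mu - h q^{d-i}) = -(q^{2d-i-1}-1)(h-\mu) + (q^{d-i}-1)(h - \mu q^{d-1}),
\]
which one verifies by expanding both sides and collecting powers of $q$. The formula for $\th^*_i$ follows by applying the same argument with the roles of starred and unstarred quantities swapped.

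I anticipate no serious obstacle; the verification is mechanical once the differences $\phi_1 - \vphi_d$ and $\phi_d - \vphi_d$ are factored. The only mildly delicate point is recognizing that these particular differences (rather than, say, $\vphi_1 - \vphi_d$ or $\phi_1 - \phi_d$) are precisely the ones whose factorization separates the starred from the unstarred parameters, producing a factor of $h^* - \mu^*$ or $\mu - h$ that cancels against $\th^*_0 - \th^*_d$ or $\th_0 - \th_d$.
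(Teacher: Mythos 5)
Your proposal is correct and follows exactly the route the paper intends: the paper's "proof" is just the remark that the formulas "can be routinely verified using Lemma \ref{lem:typeIclosed}," and your computation of $\th_0-\th_d$, $\th^*_0-\th^*_d$, $\phi_1-\vphi_d$, $\phi_d-\vphi_d$ in terms of $\eta,h,\mu,\eta^*,h^*,\mu^*$ followed by the reduction to a polynomial identity in $q$ is precisely that routine verification, carried out correctly (I checked the factorizations and the final identity). No gaps.
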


\begin{lemma}   \label{lem:typeIIth}   \samepage
\ifDRAFT {\rm lem:typeIIth}. \fi
Assume the parameter \eqref{eq:parray}  array has type II.
Then for $0 \leq i \leq d$
\begin{align*}
 \th_i
 &= \th_0
     - \frac{i (2d-i-1)(\th_0 - \th_d)}
               {d(d-1)}
     + \frac{i (d-i) (\phi_1 - \vphi_d)}
               {(d-1)(\th^*_0 - \th^*_d)}, 
\\
 \th^*_i
 &= \th^*_0
     - \frac{i (2d-i-1)(\th^*_0 - \th^*_d)}
               {d(d-1)}
     + \frac{i (d-i) (\phi_d - \vphi_d)}
               {(d-1)(\th_0 - \th_d)}.  
\end{align*}
\end{lemma}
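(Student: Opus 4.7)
The plan is to solve for the internal scalars $\eta$, $\mu$, $h$ appearing in Lemma \ref{lem:typeIIclosed} in terms of the end-parameters and then substitute them back into the closed form for $\th_i$. Throughout, $\text{\rm Char}(\F)\neq 2$ and, by Note \ref{note:II}, no prime $p\leq d$ vanishes in $\F$, so $d$ and $d-1$ are invertible in $\F$; moreover $\th^*_0\neq\th^*_d$ by Lemma \ref{lem:classify}(i), so the denominators in the statement all make sense.

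First, evaluating $\th_i=\eta+\mu(i-d/2)+h\,i(d-i)$ at $i=0$ and $i=d$ gives $\eta=(\th_0+\th_d)/2$ and $\mu=(\th_d-\th_0)/d$, hence
\[
\th_i-\th_0 \;=\; \mu i + h\,i(d-i).
\]
It remains to express $h$ in terms of the end-parameters. For this I would compute $\phi_1-\vphi_d$ directly from Lemma \ref{lem:typeIIclosed}: in the difference, the $\tau$-contributions cancel, the $hh^*(i-1)(d-i)$ piece vanishes at both $i=1$ and $i=d$, the $\pm\mu\mu^*/2$ pieces add to $\mu\mu^*$, and the $\pm\mu h^*$ pieces cancel, leaving
\[
\phi_1-\vphi_d \;=\; d\mu^*\bigl(\mu-(d-1)h\bigr).
\]
Solving for $h$ and using $\mu^*=(\th^*_d-\th^*_0)/d$ yields
\[
h \;=\; \frac{\th_d-\th_0}{d(d-1)} \;-\; \frac{\phi_1-\vphi_d}{(d-1)(\th^*_d-\th^*_0)}.
\]

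Substituting this expression into $\th_i-\th_0=\mu i+h\,i(d-i)$, the two $(\th_d-\th_0)$-contributions combine via $i/d + i(d-i)/(d(d-1)) = i(2d-i-1)/(d(d-1))$, and a sign flip (converting $\th_d-\th_0$ to $-(\th_0-\th_d)$ and $\th^*_d-\th^*_0$ to $-(\th^*_0-\th^*_d)$) produces the stated formula for $\th_i$. Finally, the formula for $\th^*_i$ follows by applying the $*$ action of $D_4$ from Lemma \ref{lem:D4}, which swaps $\th_i\leftrightarrow\th^*_i$, fixes every $\vphi_i$, and swaps $\phi_1\leftrightarrow\phi_d$; under this swap $\phi_1-\vphi_d$ becomes $\phi_d-\vphi_d$ and the roles of $\th_0-\th_d$ and $\th^*_0-\th^*_d$ are interchanged, giving the second formula at once.

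The only real obstacle is the cancellation bookkeeping needed to obtain the identity $\phi_1-\vphi_d = d\mu^*(\mu-(d-1)h)$; once that identity is in hand, everything else is a straightforward rearrangement of terms.
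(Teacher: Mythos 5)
Your proposal is correct and follows essentially the same route as the paper, which simply declares these formulas ``routinely verified'' from the closed form in Lemma \ref{lem:typeIIclosed}: you solve for $\eta$, $\mu$, $h$ from the end-parameters (the identity $\phi_1-\vphi_d=d\mu^*(\mu-(d-1)h)$ checks out, and $\mu^*\neq 0$, $d$, $d-1$ invertible are justified by $\th^*_0\neq\th^*_d$ and Note \ref{note:II}) and substitute back. The use of the $*$ action from Lemma \ref{lem:D4} to get the $\th^*_i$ formula is a clean and valid shortcut, since the fundamental parameter and hence the type are $D_4$-invariant.
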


\begin{lemma}   \label{lem:typeIII+th}   \samepage
\ifDRAFT {\rm lem:typeIII+th}. \fi
Assume the parameter array \eqref{eq:parray}  has type III$^+$.
Then for $0 \leq i \leq d$
\begin{align*}
 \th_i &=  
  \begin{cases}
    \displaystyle
     \th_0 - \frac{i (\th_0 - \th_d)}
                       {d}
                     & \text{ if $i$ is even},
   \\  \rule{0mm}{7mm} \displaystyle
    \th_0
    - \frac{(2d-i-1)(\th_0 - \th_d)}
              {d}
    + \frac{\phi_1 - \vphi_d}
              {\th^*_0 - \th^*_d}
                    &  \text{ if $i$ is odd},
   \end{cases}    
\\
 \th^*_i  &=
  \begin{cases}
    \displaystyle
     \th^*_0 - \frac{i (\th^*_0 - \th^*_d)}
                       {d}
                     & \text{ if $i$ is even},
   \\  \rule{0mm}{7mm}  \displaystyle
    \th^*_0
    - \frac{(2d-i-1)(\th^*_0 - \th^*_d)}
              {d}
    + \frac{\phi_d - \vphi_d}
              {\th_0 - \th_d}
                    &  \text{ if $i$ is odd}.                  
   \end{cases}   
\end{align*}
\end{lemma}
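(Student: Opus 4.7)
The plan is a direct verification using the closed-form parametrization provided by Lemma \ref{lem:typeIII+closed}. The starting point is that for type III$^+$ there exist scalars $\eta, h, s, \eta^*, h^*, s^*, \tau \in \F$ realizing the parameter array in the form stated there. Since the type is III$^+$, the diameter $d$ is even, so the indices $0$ and $d$ are both even; and Note \ref{note:III+} guarantees that $d$ does not vanish in $\F$, so the divisions by $d$ appearing in the formulas are meaningful throughout.

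First I would record the immediate consequences $\th_0 - \th_d = -hd$ and $\th^*_0 - \th^*_d = -h^*d$, which solve for $h$ and $h^*$ in terms of the end-parameters. For $i$ even, substitution into the closed form gives $\th_i = \th_0 + hi$, which upon replacing $h$ by $-(\th_0-\th_d)/d$ is exactly the asserted expression. The formula for $\th^*_i$ at even $i$ is identical in structure.

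The only nontrivial step is the odd-index case. For odd $i$, substitution and simplification yield $\th_i = \th_0 + h(d-i) - 2s$, so after matching against the claimed formula it suffices to establish
\[
\frac{\phi_1 - \vphi_d}{\th^*_0 - \th^*_d} = -2s - (d-1)h,
\]
equivalently $\phi_1 - \vphi_d = dh^*\bigl(2s + (d-1)h\bigr)$. This I would verify by plugging in the closed forms of $\phi_1$ (odd index) and $\vphi_d$ (even index) read off from Lemma \ref{lem:typeIII+closed}; the $\tau$-terms and the $s^*h$-terms cancel in pairs, leaving precisely $2dsh^* + hh^*d(d-1)$. The formula for $\th^*_i$ at odd $i$ proceeds identically after exchanging the roles of the unstarred and starred scalars, now via the symmetric identity $\phi_d - \vphi_d = dh\bigl(2s^* + (d-1)h^*\bigr)$.

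The whole argument is a mechanical substitution; as the author remarks, it is routine. The main thing to watch is keeping track of the parities of $0$, $d$, and $i$ when selecting the correct branch of the closed-form expressions in Lemma \ref{lem:typeIII+closed}, and invoking Note \ref{note:III+} to justify division by $d$. I do not anticipate any conceptual obstacle.
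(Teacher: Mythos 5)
Your proposal is correct and matches the paper's approach exactly: the paper states that Lemma \ref{lem:typeIII+th} ``can be routinely verified'' from the closed form in Lemma \ref{lem:typeIII+closed}, and your substitution (using $\th_0-\th_d=-hd$, $\th^*_0-\th^*_d=-h^*d$, and the identities $\phi_1-\vphi_d=2dsh^*+d(d-1)hh^*$, $\phi_d-\vphi_d=2ds^*h+d(d-1)hh^*$) is precisely that verification, with the parity of $d$ and the nonvanishing of $d$ from Note \ref{note:III+} correctly accounted for. No gaps.
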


\begin{lemma}   \label{lem:typeIII-th}   \samepage
\ifDRAFT {\rm lem:typeIII-th}. \fi
Assume the parameter array \eqref{eq:parray}  has type III$^-$.
Then for $0 \leq i \leq d$
\begin{align*}
 \th_i &=  
  \begin{cases}
    \displaystyle
     \th_0 - \frac{i (\th_0 - \th_d)}
                       {d-1}
     + \frac{i (\phi_1 - \vphi_d)}
               {(d-1)(\th^*_0 - \th^*_d)}
                     & \text{ if $i$ is even},
   \\ \rule{0mm}{7mm} \displaystyle
    \th_0
    - \frac{(2d-i-1)(\th_0 - \th_d)}
              {d-1}
    + \frac{(d-i)(\phi_1 - \vphi_d)}
              {(d-1)(\th^*_0 - \th^*_d)}
                    &  \text{ if $i$ is odd},
   \end{cases}   
\\
 \th^*_i  &=
  \begin{cases}
    \displaystyle
     \th^*_0 - \frac{i (\th^*_0 - \th^*_d)}
                       {d-1}
     + \frac{i (\phi_d - \vphi_d)}
               {(d-1)(\th_0 - \th_d)}
                     & \text{ if $i$ is even},
   \\  \rule{0mm}{7mm} \displaystyle
    \th^*_0
    - \frac{(2d-i-1)(\th^*_0 - \th^*_d)}
              {d-1}
    + \frac{(d-i)(\phi_d - \vphi_d)}
              {(d-1)(\th_0 - \th_d)}
                    &  \text{ if $i$ is odd}.
   \end{cases}   
\end{align*}
\end{lemma}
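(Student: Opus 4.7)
The plan is to prove the lemma by direct verification from the closed-form parametrization in Lemma \ref{lem:typeIII-closed}, which expresses a type III$^-$ parameter array in terms of internal scalars $\eta, h, s, \eta^*, h^*, s^*, \tau$. The task reduces to re-expressing the scalars that appear in the formula for $\th_i$ (namely $\eta$, $s$, $h$) using only the end-parameters, and then substituting; the derivation of the $\th^*_i$ formula will be entirely symmetric.

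First, since type III$^-$ forces $d$ odd, the index $0$ is even and $d$ is odd, so the closed form of Lemma \ref{lem:typeIII-closed} gives $\th_0 = \eta + s - hd/2$ and $\th_d = \eta - s - hd/2$; hence $\th_0 - \th_d = 2s$ solves for $s$, and symmetrically $\th^*_0 - \th^*_d = 2s^*$. Next I would compute $\phi_1 - \vphi_d$ using the odd-index formulas, since both $1$ and $d$ are odd in this type. I expect the $\tau$-terms, the $i(d-i+1)hh^*$ contributions, and the $h^* s$ pieces all to cancel, leaving $\phi_1 - \vphi_d = 4 s s^* + 2 h s^* (d-1)$. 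Dividing by $2s^*(d-1)$, which is nonzero by Note \ref{note:III-} and by Lemma \ref{lem:classify}(i), yields
\[
 h \;=\; \frac{\phi_1 - \vphi_d}{(d-1)(\th^*_0-\th^*_d)} \;-\; \frac{\th_0-\th_d}{d-1}.
\]

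To conclude, substitute these values of $s$ and $h$ into $\th_i = \eta + s + h(i-d/2)$ for even $i$, using $\th_0$ itself to eliminate $\eta$; this reproduces the even-$i$ formula in the statement at once. For odd $i$, the closed form gives $\th_i - \th_0 = h(d-i) - 2s$, and matching this against the proposed expression reduces to the elementary identity $-(2d-i-1) + (d-i) = -(d-1)$ inside the $s$-coefficient. The formulas for $\th^*_i$ then follow by the symmetric argument (or by invoking the $*$-automorphism of Lemma \ref{lem:D4} applied to the case already handled). The one step I expect to require care is verifying that the cancellation in $\phi_1 - \vphi_d$ really does isolate $h$ cleanly; everything else is routine algebraic substitution analogous to the preceding lemmas in this section.
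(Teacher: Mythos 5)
Your proposal is correct and follows exactly the route the paper intends: the paper gives no explicit proof, stating only that the formulas ``can be routinely verified'' from the closed form of Lemma \ref{lem:typeIII-closed}, and your computation (solving $\th_0-\th_d=2s$, $\phi_1-\vphi_d=4ss^*+2hs^*(d-1)$ for $s$ and $h$, then substituting, with the $*$-symmetry handling $\th^*_i$) is precisely that verification carried out. The cancellation you flag does work out as you expect, and the nonvanishing of $d-1$ and $\th^*_0-\th^*_d$ is correctly justified by Note \ref{note:III-} and Lemma \ref{lem:classify}(i).
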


\begin{lemma}   \label{lem:typeIVth}   \samepage
\ifDRAFT {\rm lem:typeIVth}. \fi
Assume the parameter \eqref{eq:parray}  array has type IV.
Then
\begin{align*}
 \th_1 &= \th_0 + \frac{\phi_1 - \vphi_3}
                               {\th^*_0 - \th^*_3},
&
 \th_2 &= \th_3 + \frac{\phi_1 - \vphi_3}
                               {\th^*_0 - \th^*_3},  
\\
 \th^*_1 &= \th^*_0 + \frac{\phi_3-\vphi_3}{\th_0-\th_3},
&
 \th^*_2 &= \th^*_3 + \frac{\phi_3 - \vphi_3}{\th_0 - \th_3}.  
\end{align*}
\end{lemma}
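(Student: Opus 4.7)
The plan is to verify the four identities by direct substitution of the closed-form parameterization supplied by Lemma \ref{lem:typeIVclosed}, remembering throughout that $\text{Char}(\F)=2$ so that $-x=x$ for every $x \in \F$. Since type IV forces $d=3$, there are only finitely many quantities to compute, and each identity reduces to a one-line symbolic check once the relevant differences have been simplified.

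First I would record the denominators. Using Lemma \ref{lem:typeIVclosed} together with characteristic $2$, one has $\th_0-\th_3 = -hs = hs$ and $\th^*_0-\th^*_3 = -h^*s^* = h^*s^*$, both nonzero because the eigenvalues of $A$ (resp.\ $A^*$) are mutually distinct. Next I would compute the two numerators
\begin{align*}
 \phi_1 - \vphi_3 &= hh^*(r+s+ss^*) - hh^*(r+s+s^*) = hh^*s^*(s+1), \\
 \phi_3 - \vphi_3 &= hh^*(r+s^*+ss^*) - hh^*(r+s+s^*) = hh^*s(s^*+1),
\end{align*}
where the sign manipulation is again absorbed by working in characteristic $2$. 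Dividing the first by $\th^*_0-\th^*_3 = h^*s^*$ yields $h(s+1)$, which equals $\th_1-\th_0$ by Lemma \ref{lem:typeIVclosed}; and $\th_2 - \th_3 = h - hs = h(1+s)$ in characteristic $2$, so the same quotient also equals $\th_2-\th_3$. This verifies the two formulas for $\th_1$ and $\th_2$. The formulas for $\th^*_1$ and $\th^*_2$ follow in exactly the same way, dividing the second numerator $hh^*s(s^*+1)$ by $\th_0-\th_3 = hs$ to obtain $h^*(s^*+1) = \th^*_1-\th^*_0 = \th^*_2-\th^*_3$.

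There is no real obstacle here; the entire proof is a short symbolic check, and the only point requiring care is the uniform use of characteristic $2$ to collapse signs (and to recognize that the identities cannot be inferred by specializing the type I, II, III$^\pm$ expressions of the preceding lemmas).
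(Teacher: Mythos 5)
Your proof is correct and follows exactly the route the paper intends: the paper states that Lemma \ref{lem:typeIVth} "can be routinely verified" from the closed form in Lemma \ref{lem:typeIVclosed}, and your direct substitution (with the characteristic-$2$ sign collapses and the observation that $h,h^*,s,s^*$ are nonzero so the quotients are well defined) is precisely that routine verification.
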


\begin{proofof}{Theorem \ref{thm:main1}}
By Lemmas \ref{lem:typeIth}--\ref{lem:typeIVth} the scalars
$\{\th_i\}_{i=0}^d$, $\{\th^*_i\}_{i=0}^d$ are determined by
the end-parameters and $q$.
By this and Lemma \ref{lem:classify}(iii), (iv) 
the scalars $\{\vphi_i\}_{i=1}^d$, $\{\phi_i\}_{i=1}^d$
are determined by the end-parameters and $q$.
The result follows from these comments and Lemma \ref{lem:unique}. 
\end{proofof}

\section{Proof of Proposition \ref{prop:Omega}}
\label{sec:proofprop}

In this section we prove Proposition \ref{prop:Omega}.
Fix an integer $d \geq 3$.

\begin{proofof}{Proposition \ref{prop:Omega}}
First assume the parameter array  has type I.
By Lemma \ref{lem:typeIclosed},
\begin{align*}
  \th_0  &= \eta + \mu + h q^d, 
\qquad\qquad
 \th_d = \eta + \mu q^d + h,
\\
 \th^*_0 &= \eta^* + \mu^* + h^* q^d,
\qquad\;\;
 \th^*_d = \eta^* + \mu^* q^d + h^*,
\intertext{and}
 \vphi_1 &= (q-1)(q^d-1)(\tau - \mu \mu^* - h h^* q^{d-1}),
\\
 \vphi_d &= (q^d-1)(q-1)(\tau - \mu \mu^* q^{d-1} - h h^*),
\\
 \phi_1 &= (q-1)(q^d-1)(\tau - h \mu^* - \mu h^* q^{d-1}),
\\
 \phi_d &= (q^d-1)(q-1)(\tau - h \mu^* q^{d-1} - \mu h^*).
\end{align*}
So,
\begin{align*}
(\th_0 - \th_d)(\th^*_0 - \th^*_d)
 & = (q^d-1)^2 (\mu - h)(\mu^* - h^*),
\\
 \phi_1 + \phi_d - \vphi_1 -\vphi_d
 &= (q-1)(q^d-1)(q^{d-1}+1)(\mu \mu^* + h h^* - h \mu^* -  \mu h^*). 
\end{align*}
Thus
\[
 \frac{\phi_1 + \phi_d - \vphi_1 - \vphi_d}{(\th_0 - \th_d)(\th^*_0 - \th^*_d)}
  = \frac{(q-1)(q^{d-1}+1)}{q^d-1}.
\]
We have shown the result for type I.
The proof is similar for the remaining types.
\end{proofof}

\section{A polynomial}
\label{sec:polynomial}

In this section we consider a polynomial which will be used
in our proof of Theorems \ref{thm:main2} and \ref{thm:main3}.
This polynomial is related to Proposition \ref{prop:Omega} for type I.
Fix an integer $d \geq 3$.

\begin{definition}   \label{def:f}   \samepage
\ifDRAFT {\rm def:f}. \fi
For $\omega \in \F$ we define a polynomial in $x$:
\[
 f_{\omega} (x) = \omega (x^d-1) - (x-1)(x^{d-1}+1).
\]
\end{definition}

\begin{lemma}    \label{lem:f0}   \samepage
\ifDRAFT {\rm lem:f0}. \fi
For $\omega \in \F$ the following hold:
\begin{itemize}
\item[\rm (i)]
$f_\omega(1)=0$.
\item[\rm (ii)]
Assume $\omega \neq 1$.
Then $f_\omega(x)$ has degree $d$ and $f_\omega(0) \neq 0$.
\item[\rm (iii)]
Assume $d$ is even.
Then $f_\omega(-1)=0$.
\item[\rm (iv)]
Assume  $\text{\rm Char}(\F) \neq 2$, $d$ is odd, and $\omega \neq 2$.
Then $f_\omega(-1) \neq 0$.
\item[\rm (v)]
For $0 \neq q \in \F$,
if $f_{\omega} (q)=0$ then $f_{\omega} (q^{-1})=0$.
\end{itemize}
\end{lemma}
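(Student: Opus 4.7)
The plan is to handle the five parts largely by direct substitution, with the only substantive observation being a reciprocal-polynomial identity for part (v).

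For parts (i), (iii), (iv) I would just plug in. Part (i) is $f_\omega(1)=\omega\cdot 0-0\cdot 2=0$. Part (iii) uses $(-1)^d=1$ when $d$ is even, so $f_\omega(-1)=\omega(1-1)-(-2)((-1)^{d-1}+1)=0-(-2)\cdot 0=0$. For part (iv), $d$ odd gives $(-1)^d=-1$ and $(-1)^{d-1}=1$, so $f_\omega(-1)=\omega(-2)-(-2)(2)=-2(\omega-2)$, which is nonzero because $\text{Char}(\F)\neq 2$ and $\omega\neq 2$.

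For part (ii) I would expand $(x-1)(x^{d-1}+1)=x^d+x-x^{d-1}-1$ once and read off
\[
 f_\omega(x) = (\omega-1)x^d + x^{d-1} - x + (1-\omega).
\]
The leading coefficient $\omega-1$ is nonzero, hence $\deg f_\omega = d$, and the constant term $1-\omega$ is likewise nonzero, hence $f_\omega(0)\neq 0$.

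The only step requiring a little thought is part (v). Here I would establish a reciprocal relation: a direct computation gives
\[
 q^d f_\omega(q^{-1}) = \omega(1-q^d) - (q^{d-1}-q^d)(q^{-(d-1)}+1) = -\omega(q^d-1) + q + q^d - q^{d-1} - 1,
\]
while the earlier expansion yields
\[
 -f_\omega(q) = -\omega(q^d-1) + (q-1)(q^{d-1}+1) = -\omega(q^d-1) + q^d + q - q^{d-1} - 1.
\]
These agree, so $q^d f_\omega(q^{-1}) = -f_\omega(q)$. Since $q\neq 0$, vanishing of $f_\omega(q)$ forces vanishing of $f_\omega(q^{-1})$. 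The main (very mild) obstacle is just getting the signs right in this anti-palindromic identity; once it is in hand, (v) follows immediately.
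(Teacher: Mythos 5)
Your proof is correct and matches the paper's intent: the paper simply states ``Routine verification,'' and your direct substitutions for (i)--(iv) together with the anti-palindromic identity $q^d f_\omega(q^{-1}) = -f_\omega(q)$ for (v) are exactly the routine checks being alluded to.
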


\begin{proof}
Routine verification.
\end{proof}

\begin{lemma}   \label{lem:f}
\ifDRAFT {\rm lem:f}. \fi
For $\omega \in \F$ the following hold:
\begin{itemize}
\item[\rm (i)]
We have 
$f_{\omega} (x) = (x-1) g_{\omega} (x)$,
where
\[
  g_{\omega} (x) = \omega \sum_{r=0}^{d-1} x^r - x^{d-1}-1.
\]
\item[\rm (ii)]
Assume $d$ is even.
Then 
 $f_{\omega} (x) = (x-1)(x+1) g_{\omega} (x)$,
where
\[
 g_{\omega} (x) = \omega \sum_{r=0}^{(d-2)/2} x^{2r} 
          - \sum_{r=0}^{d-2} (-1)^r x^r.
\]
\item[\rm (iii)]
Assume $d$ is even and $d$ does not vanish in $\F$.
Then for $\omega = 2/d$ we have 
$f_{\omega} (x) = - (2/d) (x-1)^3 (x+1) g(x)$,
where
\[
 g (x) = \sum_{r=0}^{(d-4)/2} (r+1)(d/2-r-1) x^{2r}
          + \sum_{r=1}^{(d-4)/2} r (d/2-r-1) x^{2r-1}.
\]
\item[\rm (iv)]
Assume $d$ is odd and $d$ does not vanish in $\F$.
Then for $\omega = 2/d$ we have
$f_{\omega} (x) = - (1/d) (x-1)^3 g(x)$,
where
\[
 g(x) = \sum_{r=0}^{d-3} (r+1)(d-r-2)x^r.
\]
\item[\rm (v)]
Assume $d$ is even and $d$ does not vanish in $\F$.
Then for $\omega = 2(d-1)/d$ we have
$f_{\omega} (x) = (2/d)(x-1)(x+1)^3 g(x)$,
where
\[
  g(x) = \sum_{r=0}^{(d-4)/2} (r+1)(d/2-r-1) x^{2r} 
           - \sum_{r=1}^{(d-4)/2} r(d/2-r-1) x^{2r-1}.
\]
\item[\rm (vi)]
Assume $d$ is odd.
Then for $\omega = 2$ we have
$f_{\omega} (x) = (x-1)(x+1)^2 g(x)$,
where
\[
 g(x) = \sum_{r=0}^{(d-3)/2} x^{2r}.
\]
\end{itemize}
\end{lemma}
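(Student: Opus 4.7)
The six parts split naturally: (i) and (ii) are general factorization identities that follow from elementary divisibility, while (iii)--(vi) are special-value factorizations corresponding to the cases of Proposition \ref{prop:Omega}. I would prove (i)--(ii) first since the later parts build on them.

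Part (i) is immediate from the identity $x^d - 1 = (x-1)\sum_{r=0}^{d-1} x^r$: substituting this into the definition of $f_\omega(x)$ and pulling out the common factor $(x-1)$ leaves exactly the displayed $g_\omega(x)$. For (ii), I continue from (i): when $d$ is even, the polynomial $\sum_{r=0}^{d-1} x^r$ factors as $(1+x)\sum_{r=0}^{(d-2)/2} x^{2r}$, and the polynomial $x^{d-1}+1$ factors as $(1+x)\sum_{r=0}^{d-2}(-1)^r x^r$ (standard since $d-1$ is odd). Substituting these two factorizations into the bracketed $g_\omega$ of part (i) extracts an additional factor $(x+1)$ and leaves the $g_\omega$ stated in (ii).

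For parts (iii)--(vi), the approach is to verify that, for the specified $\omega$, the root $x=1$ (and sometimes $x=-1$) has the multiplicity indicated by the displayed factorization, and then to match the resulting quotient with the stated polynomial $g(x)$. Multiplicity can be detected by evaluating $f_\omega$ and its first few derivatives at the relevant point: for example, in (iii) with $\omega=2/d$ one must show $f_\omega(1)=f_\omega'(1)=f_\omega''(1)=0$, and these reduce to elementary arithmetic identities in $d$ (which is precisely where the hypothesis that $d$ does not vanish in $\F$ is used). The analogous verifications for (iv), (v), (vi) proceed the same way, tracking the orders of vanishing at $1$ and at $-1$ dictated by the parity of $d$ and the chosen $\omega$. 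Once the multiplicities are confirmed, matching the leading coefficient $\omega - 1$ of $f_\omega$ against the leading term of the proposed right-hand side, and then comparing a few more coefficients, pins down the explicit form of $g(x)$.

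The main obstacle is the bookkeeping in (iii)--(v), where $g(x)$ carries the combinatorial weights $(r+1)(d/2-r-1)$ and $(r+1)(d-r-2)$. Rather than compute the quotient by long polynomial division, the cleanest route is to expand the proposed right-hand side directly and regroup by powers of $x$; the telescoping of these weights reproduces the coefficients of $f_\omega(x)$ exactly. Part (vi) is the simplest, since with $\omega = 2$ and $d$ odd, $g(x) = \sum_{r=0}^{(d-3)/2} x^{2r}$ is a geometric sum and the factorization $f_2(x) = (x-1)(x+1)^2 g(x)$ can be checked in a single line using $(x-1)(x+1)^2\sum_{r=0}^{(d-3)/2} x^{2r} = (x^2-1)(x+1)\cdot\frac{x^{d-1}-1}{x^2-1} = (x+1)(x^{d-1}-1)$ and expanding $f_2(x) = 2(x^d-1) - (x-1)(x^{d-1}+1)$.
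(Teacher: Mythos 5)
Your proposal is correct and matches the paper, whose entire proof of this lemma is the single phrase ``Routine verification''; your outline is simply that verification carried out explicitly (parts (i)--(ii) by the geometric-sum factorizations, parts (iii)--(vi) by expanding the proposed right-hand sides and comparing coefficients). One small caution: detecting the multiplicity of the root $x=1$ by vanishing of ordinary derivatives is unreliable in small positive characteristic, but this does not affect your argument since the route you actually advocate---direct expansion of the right-hand side and coefficient matching---is characteristic-free.
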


\begin{proof}
Routine verification.
\end{proof}

\begin{lemma}   \label{lem:equation} 
\ifDRAFT {\rm lem:equation}. \fi
For $\omega \in \F$ consider the equation $f_{\omega} (x)=0$.
\begin{itemize}
\item[\rm (i)]
Assume $d$ is odd.
Then there are at most $d-1$ roots of $f_{\omega} (x)=0$ other than $\pm 1$.
\item[\rm (ii)]
Assume $d$ is even.
Then there are at most $d-2$ roots of $f_{\omega} (x)=0$ other than $\pm 1$.
\item[\rm (iii)]
Assume $d$ is even and $d$ does not vanish in $\F$.
Then for $\omega = 2/d$ there are at most $d-4$ roots of 
$f_{\omega} (x)=0$ other than $\pm 1$.
\item[\rm (iv)]
Assume $d$ is odd and $d$ does not vanish in $\F$.
Then for $\omega = 2/d$ there are at most $d-3$ roots of 
$f_{\omega} (x)=0$ other than $\pm 1$.
\item[\rm (v)]
Assume $d$ is even and $d$ does not vanish in $\F$.
Then for $\omega = 2(d-1)/d$ there are at most $d-4$ roots of 
$f_{\omega} (x)=0$ other than $\pm 1$.
\item[\rm (vi)]
Assume $d$ is odd and $\omega = 2$.
Then there are at most $d-3$ roots of 
$f_{\omega} (x)=0$ other than $\pm 1$.
\end{itemize}
\end{lemma}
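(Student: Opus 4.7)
The plan is to derive each bound in Lemma \ref{lem:equation} as a direct corollary of the factorizations established in Lemma \ref{lem:f}, by matching each part (i)--(vi) with the correspondingly-labeled factorization there and reading off the degree of the residual polynomial factor.

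For each part the argument proceeds uniformly. Lemma \ref{lem:f} writes $f_\omega(x)$ as a product $(x-1)^a(x+1)^b g(x)$ for certain exponents $a \geq 1$, $b \geq 0$ and a polynomial $g(x)$ whose degree is explicit from the displayed sum. Any root of $f_\omega(x) = 0$ distinct from $\pm 1$ must then be a root of $g(x)$, so the number of such roots is at most $\deg g(x)$. Concretely, part (i) uses Lemma \ref{lem:f}(i), for which $\deg g_\omega = d-1$; part (ii) uses Lemma \ref{lem:f}(ii), for which $\deg g_\omega = d-2$; parts (iii), (iv), (v), (vi) use Lemma \ref{lem:f}(iii)--(vi) respectively, where the residual factor $g(x)$ has degree $d-4$, $d-3$, $d-4$, $d-3$ as can be read off from the explicit sums in those parts.

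A small point to keep in mind: in parts (i) and (iv) only the factor $(x-1)$ has been explicitly pulled out, so \emph{a priori} $-1$ could still be a root of the residual $g$; but since we are bounding the number of roots other than $\pm 1$, this possibility only tightens the bound, not weakens it. Likewise, none of the bounds rely on tracking any further multiplicity of $\pm 1$ beyond what Lemma \ref{lem:f} already exhibits. The hypotheses ``$d$ does not vanish in $\F$'' appearing in parts (iii)--(v) are imported verbatim from the corresponding parts of Lemma \ref{lem:f}, where they are needed to justify the coefficient $2/d$ in the factorization.

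There is no serious obstacle here. The entire argument reduces to degree-counting once the factorizations of Lemma \ref{lem:f} are in hand; the only bookkeeping is to verify that the hypotheses of each part of Lemma \ref{lem:equation} match those of the corresponding part of Lemma \ref{lem:f}, which they do by design.
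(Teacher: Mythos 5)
Your proposal is correct and follows exactly the paper's own route: the paper proves Lemma \ref{lem:equation} by declaring it immediate from the factorizations in Lemma \ref{lem:f}, which is precisely the degree-count of the residual factor $g(x)$ that you carry out (and your stated degrees $d-1$, $d-2$, $d-4$, $d-3$, $d-4$, $d-3$ are all correct). Nothing further is needed.
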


\begin{proof}
Immediate from Lemma \ref{lem:f}.
\end{proof}

\begin{lemma}   \label{lem:repeated}    \samepage
\ifDRAFT {\rm lem:repeated}. \fi
Assume $d$ does not vanish in $\F$.
Then the equation $f_{\omega} (x)=0$ has a repeated root 
for at most $d$ values of $\omega$.
\end{lemma}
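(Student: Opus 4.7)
The plan is to translate ``$f_\omega$ has a repeated root'' into the vanishing of a single auxiliary polynomial in $x$, and then exploit the reciprocal symmetry $x\leftrightarrow x^{-1}$ of $f_\omega$ to halve the count. Set $p(x)=x^d-1$ and $q(x)=(x-1)(x^{d-1}+1)$, so that $f_\omega=\omega p-q$. Then $x_0\in\F$ is a repeated root of $f_\omega$ precisely when the two linear-in-$\omega$ equations $\omega p(x_0)=q(x_0)$ and $\omega p'(x_0)=q'(x_0)$ admit a common $\omega$. Eliminating $\omega$ yields $R(x_0)=0$, where
\[
R(x):=q(x)\,p'(x)-q'(x)\,p(x)=-x^{2d-2}+(d-1)x^d-(d-1)x^{d-2}+1,
\]
a polynomial of degree exactly $2d-2$ satisfying $R(\pm 1)=0$.

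I would then argue that for every root $x_0$ of $R$ the corresponding $\omega=\omega(x_0)$ is uniquely determined: this is clear when $p(x_0)\neq 0$, and when $p(x_0)=0$ the relation $R(x_0)=0$ forces $q(x_0)=0$, so that $\omega(x_0)=q'(x_0)/p'(x_0)$. The only remaining obstruction would be a simultaneous vanishing $p(x_0)=p'(x_0)=0$; but $p'(x)=dx^{d-1}$ and $p(x_0)=0$ forces $x_0\neq 0$, so this would require $d=0$ in $\F$, which is excluded by hypothesis.

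Next I would use the reciprocal symmetry. A direct computation gives $p(1/x)=-x^{-d}p(x)$ and $q(1/x)=-x^{-d}q(x)$, whence $x^{2d-2}R(1/x)=-R(x)$. Thus the involution $\sigma\colon x_0\mapsto x_0^{-1}$ permutes the roots of $R$, and since $q/p$ is $\sigma$-invariant we have $\omega(\sigma(x_0))=\omega(x_0)$. Consequently the map from roots of $R$ to the set of admissible values of $\omega$ factors through the orbit set of $\sigma$. Writing $D$ for the number of distinct roots of $R$ in $\F$, we have $D\leq 2d-2$; the fixed points of $\sigma$ are $\pm 1$, both of which are roots of $R$, so the number of $\sigma$-orbits is $2+(D-2)/2\leq d$, yielding the desired bound.

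The main delicate point is the verification that $\omega(x_0)$ is always well defined, i.e.\ the exclusion of the degenerate case $p(x_0)=p'(x_0)=0$; this is precisely where the hypothesis that $d$ does not vanish in $\F$ enters the argument. Once that is settled, the remainder is a direct computation of $R$ together with the elementary orbit count above.
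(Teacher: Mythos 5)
Your proof is correct but follows a genuinely different route from the paper's. The paper takes the resultant of $f_\omega$ and $f'_\omega$ with respect to $x$, obtaining a polynomial in $\omega$, and then asserts (without verification, and separately for $d$ odd and $d$ even) an explicit factorization of that Sylvester determinant whose distinct roots number at most $d$. You eliminate in the other order: every repeated root of any $f_\omega$ is a root of the single fixed polynomial $R=qp'-q'p$ of degree $2d-2$; each such root determines at most one $\omega$, the only obstruction being $p(x_0)=p'(x_0)=0$, which is exactly where the hypothesis that $d$ does not vanish enters; and the reciprocal symmetry $x^{2d-2}R(1/x)=-R(x)$ together with $R(\pm1)=0$ reduces the count of admissible $\omega$-values from $2d-2$ to $d$. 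Your computation of $R$ checks out, and your argument is fully self-contained and verifiable by hand, whereas the paper's factorization (taken on faith) carries extra information, exhibiting the special values $\omega=1,\,2,\,2/d,\,2(d-1)/d$ that reappear in Lemma \ref{lem:f}. Two trivial loose ends: in characteristic $2$ the involution $x\mapsto x^{-1}$ has only the fixed point $1$, so your orbit count should read $1+(D-1)/2\leq d-1$, which only improves the bound; and the invariance $\omega(\sigma(x_0))=\omega(x_0)$ in the case $p(x_0)=0$ does not literally follow from the $\sigma$-invariance of $q/p$, but since $p(x_0)=q(x_0)=0$ forces $x_0^d=1$ and $x_0^{d-1}\in\{1,-1\}$, hence $x_0=\pm1$, such $x_0$ is a fixed point of $\sigma$ and nothing is needed there.
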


\begin{proof}
If the equation $f_{\omega} (x)=0$ has a repeated root $q$,
then both $f_{\omega} (q)=0$ and $f'_{\omega} (q)=0$, 
where $f'_{\omega}$ is the derivative of $f_{\omega}$.
The equations $f_{\omega} (x)=0$ and $f'_{\omega} (x)=0$
have a common root if and only if the resultant of 
$f_{\omega} (x)$ and $f'_{\omega} (x)$
is zero (see \cite[Chap.\ IV.8]{Lang}).
The resultant of $f_{\omega} (x)$ and $f'_{\omega} (x)$ is the determinant of the
following matrix (we display the matrix for $d=5$):
{\small
\[
M_{\omega} =
\begin{pmatrix}
 \omega-1 & 1 & 0 & 0 & -1 & 1-\omega & 0 & 0 & 0
\\
 0 &  \omega-1 & 1 & 0 & 0 & -1 & 1-\omega & 0 & 0
\\
 0 &  0 &  \omega-1 & 1 & 0 & 0 & -1 & 1-\omega & 0
\\
 0 &  0 &  0 &  \omega-1 & 1 & 0 & 0 & -1 & 1-\omega
\\
 5(\omega-1) & 4 & 0 & 0 & -1 & 0 & 0 & 0 & 0
\\
 0 &  5(\omega-1) & 4 & 0 & 0 & -1 & 0 & 0 & 0
\\
 0 &  0 &  5(\omega-1) & 4 & 0 & 0 & -1 & 0 & 0
\\
 0 &  0 &  0 &  5(\omega-1) & 4 & 0 & 0 & -1 & 0 
\\
 0 &  0 &  0 &  0 &  5(\omega-1) & 4 & 0 & 0 & -1
\end{pmatrix}
\]
}
First assume $d$ is odd.
Then
\[
 \text{det} (M_{\omega}) =
    (\omega-1)\, (\omega-2)\, (d\, \omega-2)^3 \, \psi_1(\omega)^2,
\]
where $\psi_1(x)$ is a polynomial in $x$ with leading term
$d^{(d-3)/2} x^{d-3}$. 
Thus there are at most $d$ values of $\omega$ such that 
$\text{det}(M_{\omega})=0$.
Next assume $d$ is even.
Then
\[
  \text{det}(M_{\omega}) =
     (1 - \omega)\, (d\, \omega-2)^3\, (d\, \omega - 2(d-1))^3\, \psi_2(\omega)^2,
\]
where $\psi_2(x)$ is a polynomials in $x$ with leading term
$d^{(d-6)/2} x^{d-4}$.
Thus there are at most $d-1$ values of $\omega$ such that
$\text{det}(M_{\omega})=0$.
The result follows.
\end{proof}

\begin{lemma}    \label{lem:choiceOmega}   \samepage
\ifDRAFT {\rm lem:choiceOmega}. \fi
For $3 \leq r \leq d$ let $\Gamma_r$ denote the
set consisting of the $r$th roots of unity other than $\pm 1$:
\[
  \Gamma_r = \{ q \in \F \,|\, q^r=1, \; q^2 \neq 1\}.
\]
Let $\Gamma$ be the union of $\Gamma_r$ for $3 \leq r \leq d$.
Then there exist infinitely many $\omega \in \F$ 
such that the equation $f_{\omega} (x)=0$ has no roots in $\Gamma$.
\end{lemma}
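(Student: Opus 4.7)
The plan is to reduce the statement to a straightforward finiteness argument. The key observation is that for each fixed $q \in \Gamma$, the condition $f_\omega(q) = 0$ is linear in the single variable $\omega$, so it constrains $\omega$ to at most one value. Since $\Gamma$ itself is finite, the total set of ``bad'' $\omega$ (those for which $f_\omega$ has a root in $\Gamma$) is finite, and then invoking the fact that an algebraically closed field is infinite finishes the argument.

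More concretely, I would first bound $|\Gamma|$: each $\Gamma_r$ has size at most $r$, so $|\Gamma| \leq 3+4+\cdots+d$, certainly finite. Next, I would fix $q \in \Gamma$ and rewrite $f_\omega(q)=0$ as
\[
\omega (q^d-1) = (q-1)(q^{d-1}+1),
\]
which is a linear equation in $\omega$. Two cases arise. If $q^d \neq 1$, then this equation has the unique solution $\omega_q = (q-1)(q^{d-1}+1)/(q^d-1)$. If instead $q^d = 1$, then the equation collapses to $(q-1)(q^{d-1}+1)=0$; here I would argue that $q \neq 1$ since $q \in \Gamma$, and that $q^{d-1}=-1$ combined with $q^d=1$ would force $q=-1$, again contradicting $q \in \Gamma$. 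Hence in this case $f_\omega(q) \neq 0$ for every $\omega$, contributing no bad values.

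Therefore the set of bad $\omega$ is contained in $\{\omega_q : q \in \Gamma,\; q^d \neq 1\}$, which is finite. Because $\F$ is algebraically closed it is infinite, so the complement of this finite set of bad $\omega$ contains infinitely many scalars, each of which satisfies the required property. I do not expect any real obstacle here; the only point requiring slight care is the sub-case $q^d=1$, where one must verify that the equation has no solutions in $\omega$ rather than being satisfied identically, and this is exactly where the exclusion $q^2 \neq 1$ from the definition of $\Gamma_r$ is used.
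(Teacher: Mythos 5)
Your proof is correct and follows essentially the same route as the paper: for each $q\in\Gamma$ the equation $f_\omega(q)=0$ is linear in $\omega$, the case $q^d=1$ (i.e.\ $q\in\Gamma_d$) contributes no bad values of $\omega$, and the remaining $q$ contribute at most one bad $\omega_q=(q-1)(q^{d-1}+1)/(q^d-1)$ each, so the bad set is finite while $\F$ is infinite. The only cosmetic difference is that the paper isolates the sub-case $q^d=1$ as a separate claim about $\Gamma_d$ and derives the contradiction via $q^{d-2}=1$, whereas you factor $(q-1)(q^{d-1}+1)=0$ directly; these are the same computation.
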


\begin{proof}
We claim that for any $\omega \in \F$
the equation $f_{\omega}(x)=0$ has no roots in $\Gamma_d$.
Suppose $f_{\omega}(q)=0$ for some $q \in \Gamma_d$.
Then $0 = f_{\omega}(q) = q^{d-1} - q$, so $q^{d-2}=1$. 
By this and $q^d = 1$ we must have $q^2=1$, a contradiction.
Thus the claim holds.
For $q \in \Gamma \setminus \Gamma_d$ define
\[
   \omega_q = \frac{(q-1)(q^{d-1}+1)}{q^d-1},
\]
and consider the set
\[
   \Delta = \{ \omega_q \,|\, q \in \Gamma \setminus \Gamma_d\}.
\]
Note that $\F \setminus \Delta$ has infinitely many elements,
since $\F$ is infinite and $\Delta$ is finite.
For $\omega \in \F \setminus \Delta$,
the equation $f_{\omega}(x)=0$ has no roots in $\Gamma \setminus \Gamma_d$.
By this and the above claim, the equation $f_{\omega}(x)=0$ has no roots
in $\Gamma$.
The result follows.
\end{proof}

\begin{corollary}   \label{cor:Omegaexists}   \samepage
\ifDRAFT {\rm cor:Omegaexists}. \fi
Assume $d$ does not vanish in $\F$.
Then there exist infinitely many $\omega \in \F$ that satisfy both
{\rm (i)} and {\rm (ii)} below:
\begin{itemize}
\item[\rm (i)]
The equation $f_\omega(x)=0$ has no repeated roots.
\item[\rm (ii)]
The equation $f_\omega(x)=0$ has no roots in $\Gamma$,
where $\Gamma$ is from Lemma \ref{lem:choiceOmega}.
\end{itemize}
\end{corollary}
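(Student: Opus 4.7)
The plan is to combine Lemma \ref{lem:repeated} and Lemma \ref{lem:choiceOmega} directly, using the fact that removing a finite set from an infinite set leaves an infinite set. First I would let $S \subseteq \F$ denote the set of $\omega \in \F$ for which the equation $f_\omega(x)=0$ has no roots in $\Gamma$; by Lemma \ref{lem:choiceOmega} the set $S$ is infinite. Next I would let $T \subseteq \F$ denote the set of $\omega \in \F$ for which $f_\omega(x)=0$ has a repeated root; by Lemma \ref{lem:repeated} (which uses the assumption that $d$ does not vanish in $\F$) the set $T$ has at most $d$ elements and is therefore finite. Then $S \setminus T$ is infinite, and every $\omega \in S \setminus T$ satisfies both (i) and (ii) by construction.

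There is no substantive obstacle in this corollary: all the real work has already been done in the two preceding lemmas, one of which handles repeated roots via a resultant computation and the other of which excludes roots of unity by producing explicitly a finite exceptional set $\Delta$ and observing that $\F \setminus \Delta$ is infinite. The corollary is essentially a packaging step, combining the two finiteness/cofiniteness statements into a single assertion that will be convenient in Section~\ref{sec:proofmain3} when producing the $\lfloor (d-1)/2 \rfloor$ mutually non-isomorphic Leonard systems that share common end-parameters. The only point to double-check is that Lemma \ref{lem:repeated} genuinely applies in our setting, i.e.\ that we have assumed $d$ does not vanish in $\F$ (we have), so no further case analysis is needed.
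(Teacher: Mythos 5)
Your proposal is correct and matches the paper's own argument, which simply cites Lemmas \ref{lem:repeated} and \ref{lem:choiceOmega}; you have merely spelled out the set-difference reasoning that the paper leaves implicit. No issues.
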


\begin{proof}
Follows from Lemmas \ref{lem:repeated} and \ref{lem:choiceOmega}.
\end{proof}

\begin{lemma}    \label{lem:choiceq}   \samepage
\ifDRAFT {\rm lem:choiceq}. \fi
Let $\omega \in \F$ with $\omega \neq 1$, $\omega \neq 2$.
Assume that the equation $f_\omega(x)=0$ has no repeated roots.
\begin{itemize}
\item[\rm (i)]
Assume $\text{\rm Char}(\F) \neq 2$ and $d$ is odd.
Then the equation $f_\omega(x)=0$ has mutually distinct $d-1$ 
nonzero roots other than $\pm 1$.
\item[\rm (ii)]
Assume $d$ is even.
Then the equation $f_\omega(x)=0$ has mutually distinct $d-2$ 
nonzero roots other than $\pm 1$. 
\end{itemize}
\end{lemma}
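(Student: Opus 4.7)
The plan is to combine the explicit factorizations in Lemma \ref{lem:f} with the degree information in Lemma \ref{lem:f0}(ii) and the no-repeated-roots hypothesis, handling the two parities of $d$ separately.

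First I would observe that, since $\omega \neq 1$, Lemma \ref{lem:f0}(ii) gives $\deg f_\omega = d$ and $f_\omega(0) \neq 0$. Because $\F$ is algebraically closed, $f_\omega$ therefore has exactly $d$ roots counted with multiplicity, and none of them is zero. The no-repeated-roots hypothesis then strengthens this to $d$ mutually distinct nonzero roots.

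For part (i), I would invoke Lemma \ref{lem:f}(i) to factor $f_\omega(x) = (x-1)\,g_\omega(x)$; since $\omega \neq 1$, the leading coefficient $\omega-1$ of $g_\omega$ is nonzero, so $\deg g_\omega = d-1$. By Lemma \ref{lem:f0}(iv), the hypotheses $\text{\rm Char}(\F) \neq 2$, $d$ odd, and $\omega \neq 2$ give $f_\omega(-1) \neq 0$. So among the $d$ distinct nonzero roots of $f_\omega$, exactly one is $1$ and none is $-1$, and the remaining $d-1$ are precisely the claimed roots other than $\pm 1$.

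For part (ii), I would instead invoke Lemma \ref{lem:f}(ii) to write $f_\omega(x) = (x-1)(x+1)\,g_\omega(x)$ with $\deg g_\omega = d-2$ (again using $\omega \neq 1$). By Lemma \ref{lem:f0}(i), (iii), both $1$ and $-1$ are roots of $f_\omega$. The no-repeated-roots hypothesis forces $1 \neq -1$ in $\F$; otherwise $(x-1)(x+1) = (x-1)^2$ would already make $1$ a repeated root. Hence $\text{\rm Char}(\F) \neq 2$ in this case automatically, and among the $d$ distinct nonzero roots of $f_\omega$ exactly one is $1$ and exactly one is $-1$, leaving $d-2$ nonzero roots other than $\pm 1$.

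The proof is essentially a bookkeeping exercise and I do not anticipate a serious obstacle; the only point requiring care is the observation in part (ii) that the no-repeated-roots hypothesis silently rules out $\text{\rm Char}(\F) = 2$, so no separate characteristic-two case analysis is needed.
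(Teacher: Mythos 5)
Your proof is correct and follows essentially the same route as the paper: establish that $f_\omega$ has $d$ mutually distinct nonzero roots via Lemma \ref{lem:f0}(ii) and the no-repeated-roots hypothesis, then discard the root $1$ (and $-1$ when $d$ is even) using Lemma \ref{lem:f0}(i), (iii), (iv). Your extra remark in part (ii) that the no-repeated-roots hypothesis silently excludes $\text{\rm Char}(\F)=2$ is a valid observation that the paper's proof leaves implicit, but it does not change the argument.
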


\begin{proof}
We claim that the equation $f_\omega(x)=0$ has mutually distinct
$d$ nonzero roots.
By Lemma \ref{lem:f0}(ii) and since $\omega \neq 1$,
the polynomial $f_\omega(x)$ has degree $d$ and $f_\omega(0) \neq 0$.
Now the claim holds by this and since $f_\omega(x)=0$ has no repeated roots.

(i):
By Lemma \ref{lem:f0}(i) $f_\omega(1)=0$.
We have $f_\omega(-1) \neq 0$ by Lemma \ref{lem:f0}(iv) and since
$\omega \neq 2$, $\text{Char}(\F) \neq 2$.
By these comments and the claim, the equation $f_\omega(x)=0$ has
mutually distinct $d-1$ nonzero roots other than $\pm 1$.

(ii):
By Lemma \ref{lem:f0}(i), (iii) each of $1$, $-1$ is a root of $f_\omega(x)=0$.
By this and the claim, the equation $f_\omega(x)=0$ has
mutually distinct $d-2$ nonzero roots other than $\pm 1$.
\end{proof}

\section{Proof of Theorem \ref{thm:main2}}
\label{sec:proofmain2}

\begin{proofof}{Theorem \ref{thm:main2}}
Suppose we are given a parameter array over $\F$:
\[
  (\{\th_i\}_{i=0}^d, \{\th^*_i\}_{i=0}^d, \{\vphi_i\}_{i=1}^d, \{\phi_i\}_{i=1}^d).
\]
Let $\tilde{P}$ denote the set of parameter arrays
\[
  \tilde{p} = (\{\tth_i\}_{i=0}^d, \{\tth^*_i\}_{i=0}^d, \{\tvphi_i\}_{i=1}^d, \{\tphi_i\}_{i=1}^d)
\]
over $\F$ that satisfy
\[
\begin{array}{ccccccc}
  \tth_0 = \th_0, & &
  \tth_d = \th_d, & &
  \tth^*_0 = \th^*_0, & &
  \tth^*_d = \th^*_d, 
\\
  \tvphi_1 = \vphi_1, & &
  \tvphi_d = \vphi_d, & &
  \tphi_1 = \phi_1,  & &
  \tphi_d = \phi_d.
\end{array}
\]
We count the number of elements of $\tilde{P}$.
By Theorem \ref{thm:main1} a parameter array in $\tilde{P}$ is determined by its
fundamental parameter.
Let $\tilde{Q}$ denote the set of nonzero $\tq \in \F$ such that $\tq + \tq^{-1}$
is the fundamental parameter for some $\tilde{p} \in \tilde{P}$.
Note that $\tq$ is determined up to inverse by the fundamental parameter.
So we count the number of elements of $\tilde{Q}$ up to inverse.
Define
\begin{align*}
 \Omega &=
   \frac{\phi_1 + \phi_d - \vphi_1 - \vphi_d}
          {(\th_0 - \th_d)(\th^*_0 - \th^*_d)}.
\end{align*}
By Proposition \ref{prop:Omega}, for $\tilde{p} \in \tilde{P}$ we obtain the equation:
\begin{equation}      \label{eq:tOmega}
\begin{array}{c|c}
\text{\rm Type of $\tilde{p}$ } & \text{\rm Equation}
\\ \hline 
\text{\rm I}
& \displaystyle  \frac{ (\tq-1)(\tq^{d-1}+1)}{\tq^d-1} = \Omega \rule{0mm}{7mm}
\\
\text{\rm II}
 &   2/d = \Omega   \rule{0mm}{5mm}
\\
\text{\rm III$^+$}
&  2(d-1)/d =\Omega  \rule{0mm}{5mm}
\\
\text{\rm  III$^-$}
  &  2 =\Omega    \rule{0mm}{5mm}
\\
\text{\rm IV}
&  0 = \Omega    \rule{0mm}{5mm}
\end{array}
\end{equation}
where $\tq + \tq^{-1}$ is the fundamental parameter of $\tilde{p}$.

We claim that at least one of $1$, $-1$ is not contained $\tilde{Q}$
when $\text{Char}(\F) \neq 2$.
By way of contradiction, assume $\text{Char}(\F) \neq 2$ and
$\{1, -1\} \subseteq \tilde{Q}$.
Then there is a $\tilde{p}_1 \in \tilde{P}$ (resp.\ $\tilde{p}_2 \in \tilde{P}$) 
that has fundamental parameter $2$ (resp.\ $-2$).
Note that $\tilde{p}_1$ has type II and $\tilde{p}_2$ has type III$^+$ or III$^-$.
So by \eqref{eq:tOmega} 
$d\, \Omega =2$, and either $d\, \Omega =2(d-1)$ or $\Omega =2$.
If $d\, \Omega=2$ and $d\, \Omega =2(d-1)$, then $d-2$ vanishes in $\F$.
If $d\, \Omega=2$ and $\Omega =2$, then $d-1$ vanishes in $\F$.
But, by Note \ref{note:II}, neither of $d-1$, $d-2$ vanishes in $\F$, a contradiction.
We have shown the claim.
Now we count the number of elements of $\tilde{Q}$ up to inverse.
Note that $\Omega \neq 1$ by Corollary \ref{cor:Omega}.
First assume $\Omega \neq 2$, $d \, \Omega \neq 2$, and
$d\, \Omega \neq 2(d-1)$.
By Lemma \ref{lem:equation}(i), (ii)
there are up to inverse at most $\lfloor (d-1)/2 \rfloor$ elements of $\tilde{Q}$.
Next assume $d$ is even and $d\, \Omega = 2$.
By Lemma \ref{lem:equation}(iii)
there are up to inverse at most $(d-4)/2$ elements of $\tilde{Q}$
other than $\pm 1$.
Next assume $d$ is odd and $d\, \Omega = 2$.
By Lemma \ref{lem:equation}(iv)
there are up to inverse at most $(d-3)/2$ elements of $\tilde{Q}$
other than $\pm 1$.
Next assume $d$ is even and $d\, \Omega = 2(d-1)$.
By Lemma \ref{lem:equation}(v)
there are up to inverse at most $(d-4)/2$ elements of $\tilde{Q}$
other than $\pm 1$.
Next assume $d$ is odd and $\Omega = 2$.
By Lemma \ref{lem:equation}(vi) 
there are up to inverse at most $(d-3)/2$ elements of $\tilde{Q}$
other than $\pm 1$.
By these comments and the claim, there are up to inverse 
at most $\lfloor (d-1)/2 \rfloor$ elements of $\tilde{Q}$.
The result follows.
\end{proofof}

\section{How to construct a parameter array having specified end-parameters}
\label{sec:const}

In this section we try to construct a parameter array
having specified end-parameters.
To simplify our description, we restrict our attention to type I;
we can proceed in a similar way for the other types.
Fix an integer $d \geq 3$,
and pick scalars 
\[
 \th_0, \quad
 \th_d, \quad
 \th^*_0, \quad
 \th^*_d, \quad
 \vphi_1, \quad
 \vphi_d, \quad
 \phi_1, \quad
 \phi_d
\]
in $\F$ such that $\th_0 \neq \th_d$ and $\th^*_0 \neq \th^*_d$.
We will try to construct a parameter array
\[
  (\{\tth_i\}_{i=0}^d, \{\tth^*_i\}_{i=0}^d, \{\tvphi_i\}_{i=1}^d, \{\tphi_i\}_{i=1}^d)
\]
that satisfies
\begin{equation}    \label{eq:coincide2}
\begin{array}{ccccccc}
  \tth_0 = \th_0,   &  &
  \tth_d = \th_d,   &  &
  \tth^*_0 = \th^*_0,  &  &
  \tth^*_d = \th^*_d, 
\\
  \tvphi_1 = \vphi_1, & &
  \tvphi_d = \vphi_d, & &
  \tphi_1 = \phi_1, & &
  \tphi_d = \phi_d.
\end{array}
\end{equation}
Define
\[
  \Omega = \frac{\phi_1 + \phi_d - \vphi_1 - \vphi_d}
                       {(\th_0 - \th_d)(\th^*_0 - \th^*_d)}.
\]
In view of Note \ref{note:I} and Proposition \ref{prop:Omega},
we assume there exists a nonzero $q \in \F$ such that 
$q^i \neq 1$ for $1 \leq i \leq d$, and
\begin{equation}         \label{eq:Omega2}
 \Omega = \frac{(q-1)(q^{d-1}+1)}{q^d-1}.
\end{equation}
In view of Lemma \ref{lem:typeIth},
we define scalars $\{\tth_i\}_{i=0}^d$, $\{\tth^*_i\}_{i=0}^d$
as follows.

\begin{definition}    \label{def:tth}   \samepage
\ifDRAFT {\rm def:tth}. \fi
For $0 \leq i \leq d$ define 
\begin{align*}
 \tth_i
 &= \th_0
    -  \frac{(q^i-1)(q^{2d-i-1}-1)(\th_0-\th_d)}
             {(q^{d-1}-1)(q^{d}-1)}
    + \frac{(q^i-1)(q^{d-i}-1)(\phi_1-\vphi_d)}
             {(q-1)(q^{d-1}-1)(\th^*_0 - \th^*_d)}, 
\\
 \tth^*_i
 &=  \th^*_0 
    -  \frac{(q^i-1)(q^{2d-i-1}-1)(\th^*_0-\th^*_d)}
             {(q^{d-1}-1)(q^{d}-1)}
    + \frac{(q^i-1)(q^{d-i}-1)(\phi_d-\vphi_d)}
             {(q-1)(q^{d-1}-1)(\th_0 - \th_d)}. 
\end{align*}
\end{definition}

The following two lemmas can be routinely verified.

\begin{lemma}   \label{lem:th0thdcoincide}
\ifDRAFT {\rm lem:th0thdcoincide}. \fi
With reference to Definition \ref{def:tth},
\[
\tth_0 = \th_0,  \qquad
\tth_d = \th_d, \qquad
\tth^*_0 = \th^*_0,  \qquad
\tth^*_d = \th^*_d.
\]
\end{lemma}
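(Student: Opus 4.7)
The plan is to verify each of the four identities by direct substitution into the formulas in Definition~\ref{def:tth}, exploiting the factors $(q^i-1)$ and $(q^{d-i}-1)$ that appear in the numerators of the correction terms. The standing hypotheses that $q^i \neq 1$ for $1 \leq i \leq d$ (together with $q \neq 1$, since $i=1$ is included) and that $\th_0 \neq \th_d$, $\th^*_0 \neq \th^*_d$ guarantee that every denominator appearing in Definition~\ref{def:tth} is nonzero, so the expressions are well-defined to begin with.

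For $\tth_0 = \th_0$, I would simply set $i=0$. Then the common factor $q^i - 1 = q^0 - 1 = 0$ appears in the numerator of both of the two correction terms in the formula for $\tth_i$, so both corrections vanish identically and what remains is the leading $\th_0$.

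For $\tth_d = \th_d$, setting $i=d$ the factor $q^{d-i} - 1 = q^0 - 1 = 0$ appears in the numerator of the second correction term, so that term vanishes. The first correction term has numerator $(q^d-1)(q^{d-1}-1)(\th_0 - \th_d)$, which cancels the denominator $(q^{d-1}-1)(q^d-1)$ (here is where the hypotheses $q^{d-1} \neq 1$ and $q^d \neq 1$ are used) to yield exactly $\th_0 - \th_d$. Therefore $\tth_d = \th_0 - (\th_0 - \th_d) = \th_d$.

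The verifications of $\tth^*_0 = \th^*_0$ and $\tth^*_d = \th^*_d$ are carried out identically, using the parallel formula for $\tth^*_i$ in Definition~\ref{def:tth}, with the roles of $(\th_0,\th_d)$ and $(\th^*_0,\th^*_d)$ and of $\phi_1-\vphi_d$ and $\phi_d-\vphi_d$ interchanged in the obvious way. There is no real obstacle here: the lemma is a sanity check that the interpolation formula in Definition~\ref{def:tth} reproduces the prescribed boundary values, and it follows entirely from the two vanishings $q^i-1=0$ at $i=0$ and $q^{d-i}-1=0$ at $i=d$ together with one cancellation in the leading correction term.
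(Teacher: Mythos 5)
Your verification is correct and matches the paper's intent exactly: the paper offers no written proof, stating only that the lemma ``can be routinely verified,'' and your direct substitution at $i=0$ and $i=d$ (using the vanishing factors $q^i-1$ and $q^{d-i}-1$ and the one cancellation against the denominator $(q^{d-1}-1)(q^d-1)$) is precisely that routine verification. The appeal to the standing hypotheses of Section~\ref{sec:const} to justify that all denominators are nonzero is also accurate.
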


\begin{lemma}   \label{lem:tthindep}   \samepage
\ifDRAFT {\rm lem:tthindep}. \fi
Assume
 $\tth_i \neq \tth_j$, $\tth^*_i \neq \tth^*_j$ for $1 \leq i < j \leq d$.
Then each of the expressions
\[
 \frac{\tth_{i-2} - \tth_{i+1}}{\tth_{i-1}-\tth_i},  \qquad\qquad\qquad
  \frac{\tth^*_{i-2} - \tth^*_{i+1}}{\tth^*_{i-1}-\tth^*_i}
\]
is equal to $q+q^{-1}+1$ for $2 \leq i \leq d-1$.
\end{lemma}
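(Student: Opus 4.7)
The plan is to exploit the closed-form expressions in Definition \ref{def:tth}. Expanding the products $(q^i-1)(q^{2d-i-1}-1)$ and $(q^i-1)(q^{d-i}-1)$ and collecting the terms that depend on $i$, one rewrites
\[
\tth_i = A + B q^i + C q^{d-i} \qquad (0 \leq i \leq d),
\]
where $A, B, C \in \F$ are scalars that depend only on the eight prescribed end-parameters and on $q$, but not on $i$. An analogous expansion gives $\tth^*_i = A^* + B^* q^i + C^* q^{d-i}$ with $A^*, B^*, C^*$ independent of $i$. The denominators in Definition \ref{def:tth} are nonzero because $q^i \neq 1$ for $1 \leq i \leq d$, so these scalars are well defined.

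Once $\tth_i$ is in this shape, a direct calculation based on the identity $q^n - q^m = q^m(q^{n-m}-1)$ yields
\begin{align*}
\tth_{i-2} - \tth_{i+1} &= (q^3-1)\bigl(C q^{d-i-1} - B q^{i-2}\bigr),\\
\tth_{i-1} - \tth_i     &= (q-1)\bigl(C q^{d-i} - B q^{i-1}\bigr).
\end{align*}
Since $C q^{d-i-1} - B q^{i-2} = q^{-1}(C q^{d-i} - B q^{i-1})$, and since $\tth_{i-1} - \tth_i \neq 0$ by the hypothesis $\tth_i \neq \tth_j$, the common factor $C q^{d-i} - B q^{i-1}$ cancels and we obtain
\[
\frac{\tth_{i-2} - \tth_{i+1}}{\tth_{i-1} - \tth_i} = \frac{(q^3-1)q^{-1}}{q-1} = q^{-1}(q^2+q+1) = q + q^{-1} + 1,
\]
for $2 \leq i \leq d-1$. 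The identical argument, applied to $\tth^*_i$ with $A^*, B^*, C^*$ in place of $A, B, C$, handles the second ratio.

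The main point is not really an obstacle but a structural recognition: $\tth_i$ has the shape $A + B q^i + C q^{d-i}$, which is precisely the form guaranteed by Lemma \ref{lem:typeIclosed} for type I. Once this observation is made, the remainder is pure algebra; the factors $(q^3-1)$ and $(q-1)$ arising from the index shifts $(-2,+1)$ and $(-1,0)$ conspire to produce the factor $(q^3-1)/(q(q-1)) = q+q^{-1}+1$, exactly as in \eqref{eq:indep}. In effect, Definition \ref{def:tth} reverse-engineers the type-I structure from the eight end-parameter constraints, and this lemma records that the fundamental-parameter condition is then automatically satisfied.
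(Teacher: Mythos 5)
Your proof is correct, and it is exactly the "routine verification" the paper has in mind: the paper omits the argument entirely, and your expansion of Definition \ref{def:tth} into the type-I shape $A+Bq^i+Cq^{d-i}$ (using $q^{2d-i-1}=q^{d-1}q^{d-i}$), followed by the cancellation of $Cq^{d-i}-Bq^{i-1}$, is the natural way to carry it out. The hypotheses are used correctly: $q^i\neq 1$ for $1\le i\le d$ makes the coefficients well defined, and $\tth_{i-1}\neq\tth_i$ (covered by the assumed range since $i-1\ge 1$) justifies the division.
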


In view of Lemma \ref{lem:vth}(i) we define scalars $\{\vth_i\}_{i=1}^d$
as follows.

\begin{definition}   \label{def:vth}   \samepage
\ifDRAFT {\rm def:vth}. \fi
For $1 \leq i \leq d$ define
\[
 \vth_i = \frac{(q^i-1)(q^{d-i+1}-1)}{(q-1)(q^d-1)}.
\]
\end{definition}

In view of Lemma \ref{lem:classify}(iii), (iv),
we define scalars $\{\tvphi_i\}_{i=1}^d$, $\{\tphi_i\}_{i=1}^d$
as follows.

\begin{definition}    \label{def:tvphitphi}   \samepage
\ifDRAFT {\rm def:tvphitphi}. \fi
For $1 \leq i \leq d$ define
\begin{align*}
 \tvphi_i &= \phi_1 \tvth_i + (\tth^*_i - \tth^*_0)(\tth_{i-1} - \tth_d), 
\\
 \tphi_i &= \vphi_1 \tvth_i + (\tth^*_i - \tth^*_0)(\tth_{d-i+1} - \tth_0). 
\end{align*}
\end{definition}

\begin{lemma}     \label{lem:vphi1vphid}   \samepage
\ifDRAFT {\rm lem:vphi1vphid}. \fi
With reference to Definition \ref{def:tvphitphi},
\[
\tvphi_1 = \vphi_d,  \qquad
\tvphi_d = \vphi_d,  \qquad
\tphi_1 = \phi_1,  \qquad
\tphi_d = \phi_d.
\]
\end{lemma}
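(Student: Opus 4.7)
The plan is to verify each of the four equalities by directly specialising Definitions \ref{def:tth}, \ref{def:vth}, and \ref{def:tvphitphi} at $i=1$ and $i=d$, and then simplifying. This is the same template used in Lemma \ref{lem:th0thdcoincide}, so the workflow is conceptually routine; the subtlety is purely in the collection of terms.

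First, I record auxiliary simplifications. A direct check of Definition \ref{def:vth} yields $\tvth_1 = \tvth_d = 1$, since in both cases the numerator factors as $(q-1)(q^d-1)$. From Lemma \ref{lem:th0thdcoincide} I already have $\tth_0 = \th_0$ and $\tth_d = \th_d$, so the ``ambient'' factor $\tth_0 - \tth_d$ appearing in $\tvphi_1$ is just $\th_0 - \th_d$, and similarly for the factor $\tth_d - \tth_0$ appearing in $\tphi_1$.

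Second, I evaluate the four increments $\tth^*_1 - \tth^*_0$, $\tth^*_d - \tth^*_0$, $\tth_{d-1} - \tth_d$, and $\tth_1 - \tth_0$ from Definition \ref{def:tth}. The pivotal algebraic step is the identity
\[
  \frac{(q-1)(q^{2d-2}-1)}{(q^{d-1}-1)(q^d-1)}
  \;=\; \frac{(q-1)(q^{d-1}+1)}{q^d-1}
  \;=\; \Omega ,
\]
which follows from the factorisation $q^{2d-2}-1 = (q^{d-1}-1)(q^{d-1}+1)$ together with the constraint \eqref{eq:Omega2}. Combined with the defining equation $\Omega(\th_0 - \th_d)(\th^*_0 - \th^*_d) = \phi_1 + \phi_d - \vphi_1 - \vphi_d$, this causes the quadratic cross-terms $(\tth^*_i - \tth^*_0)(\tth_{i-1} - \tth_d)$ and $(\tth^*_i - \tth^*_0)(\tth_{d-i+1} - \tth_0)$ in Definition \ref{def:tvphitphi} to telescope into an $\F$-linear combination of $\phi_1, \phi_d, \vphi_1, \vphi_d$.

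Third, I substitute these ingredients into Definition \ref{def:tvphitphi} and collect. Each of the four displayed equalities in the lemma reduces to an algebraic check that the resulting linear combination of end-parameters agrees with the right-hand side as displayed. The main obstacle is bookkeeping: four separate quantities must telescope through the same $\Omega$-identity with the correct signs and the correct pairing of $\phi$'s with $\vphi$'s, and care must be taken since two of the four right-hand sides are labelled $\vphi_d$ and two are labelled $\phi$'s with different indices. No new machinery beyond Definition \ref{def:tth} and the $\Omega$-relation of Proposition \ref{prop:Omega} is required.
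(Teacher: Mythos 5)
Your approach is essentially the paper's: its proof likewise evaluates $\tvphi_1$, $\tvphi_d$, $\tphi_1$, $\tphi_d$ explicitly, obtaining each as a linear combination of the end-parameters plus the term $(q-1)(q^{d-1}+1)(\th_0-\th_d)(\th^*_0-\th^*_d)/(q^d-1)$, and then invokes \eqref{eq:Omega2} exactly as you propose. One caution: the first displayed equality of the lemma, $\tvphi_1=\vphi_d$, is a typo for $\tvphi_1=\vphi_1$ --- this is what \eqref{eq:coincide2} and Proposition \ref{prop:const} require, and what the computation actually yields, since $\tvphi_1=\phi_1+\phi_d-\vphi_d-\Omega(\th_0-\th_d)(\th^*_0-\th^*_d)=\vphi_1$. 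Since you propose to match ``the right-hand side as displayed,'' be aware that the first slot cannot be made to come out to $\vphi_d$ in general; the correct target there is $\vphi_1$, and with that correction your outline goes through.
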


\begin{proof}
One routinely checks that
\begin{align*}
\tvphi_1 &= 
  \phi_1 + \phi_d - \vphi_d
 - \frac{(q-1)(q^{d-1}+1)(\th_0 - \th_d)(\th^*_0 - \th^*_d)}
          {q^d-1},
\\
\tvphi_d &= \vphi_d,
\\
\tphi_1 &= \vphi_1 + \vphi_d - \phi_d
  + \frac{(q-1)(q^{d-1}+1)(\th_0 - \th_d)(\th^*_0 - \th^*_d)}
          {q^d-1},
\\
\tphi_d &=  \vphi_1 + \vphi_d - \phi_1
  + \frac{(q-1)(q^{d-1}+1)(\th_0 - \th_d)(\th^*_0 - \th^*_d)}
          {q^d-1}.
\end{align*}
Now the result follows from \eqref{eq:Omega2}.
\end{proof}

\begin{proposition}    \label{prop:const}   \samepage
\ifDRAFT {\rm prop:const}. \fi
The sequence 
$\tilde{p} = (\{\tth_i\}_{i=0}^d, \{\tth^*_i\}_{i=0}^d, \{\tvphi_i\}_{i=1}^d, \{\tphi_i\}_{i=1}^d)$
is a parameter array over $\F$ if and only if
\begin{align}
 &  \tth_i \neq \tth_j,  \qquad \tth^*_i \neq \tth^*_j 
       \qquad\qquad\; (0 \leq i < j \leq d),                         \label{eq:tthdistinct}
\\
 & \tvphi_i \neq 0, \; \qquad \tphi_i \neq 0 
         \qquad\qquad\quad (1 \leq i \leq d).                     \label{eq:tvphinonzero}
\end{align}
In this case, the parameter array $\tilde{p}$ satisfies \eqref{eq:coincide2}.
\end{proposition}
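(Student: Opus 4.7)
The plan is to apply Lemma \ref{lem:classify}, which characterizes parameter arrays over $\F$ by the five conditions (i)--(v). Conditions (i) and (ii) of that lemma are literally the hypotheses \eqref{eq:tthdistinct} and \eqref{eq:tvphinonzero}, so the only direction of substance is the ``if'' direction, and for it we must check that conditions (iii)--(v) hold automatically for the sequence $\tilde p$ built in Definitions \ref{def:tth}, \ref{def:vth}, \ref{def:tvphitphi}.

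Condition (v) is immediate from Lemma \ref{lem:tthindep}, whose hypothesis is a weakening of \eqref{eq:tthdistinct}. For conditions (iii) and (iv), comparing with Definition \ref{def:tvphitphi} I need two things: first, that the scalar $\tvth_i$ of Definition \ref{def:vth} agrees with $\sum_{\ell=0}^{i-1}(\tth_\ell-\tth_{d-\ell})/(\tth_0-\tth_d)$; and second, that the coefficients $\phi_1$ and $\vphi_1$ appearing in Definition \ref{def:tvphitphi} coincide with $\tphi_1$ and $\tvphi_1$ respectively, which is exactly the content of Lemma \ref{lem:vphi1vphid}.

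The main computational step is the first point just above, and it is the one place where the specific form of Definition \ref{def:tth} is used. Subtracting the expression for $\tth_{d-\ell}$ from that for $\tth_\ell$, the third summand (involving $\phi_1-\vphi_d$) is symmetric in $\ell \leftrightarrow d-\ell$ and therefore cancels; the second summand collapses, after factoring $(q^{d-1}-1)$ from the numerator, to
\[
 \tth_\ell - \tth_{d-\ell} \;=\; \frac{(\th_0-\th_d)(q^{d-\ell}-q^\ell)}{q^d-1}.
\]
Setting $\ell=0$ recovers $\tth_0-\tth_d=\th_0-\th_d$ (consistent with Lemma \ref{lem:th0thdcoincide}), so dividing and summing a pair of geometric series gives $\sum_{\ell=0}^{i-1}(\tth_\ell-\tth_{d-\ell})/(\tth_0-\tth_d)=(q^i-1)(q^{d-i+1}-1)/((q-1)(q^d-1))=\tvth_i$, as required. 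This is essentially Lemma \ref{lem:vth}(i) applied to the closed form of the $\tth_i$, and it is the only step that risks being nontrivial; once it is in hand, Definition \ref{def:tvphitphi} becomes literally Lemma \ref{lem:classify}(iii),(iv) for $\tilde p$.

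Finally, for the concluding clause that $\tilde p$ satisfies \eqref{eq:coincide2}, the four identities on the $\tth$'s and $\tth^*$'s are Lemma \ref{lem:th0thdcoincide} and the four identities on the $\tvphi$'s and $\tphi$'s are Lemma \ref{lem:vphi1vphid} (note, in particular, that the latter relies on the hypothesis \eqref{eq:Omega2} linking $\Omega$ to $q$, which is why that relation had to be imposed at the outset of the section). No further step is needed, since the ``only if'' direction is immediate from Lemma \ref{lem:classify}(i),(ii).
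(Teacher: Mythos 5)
Your proof is correct and takes essentially the same route as the paper: the paper's own proof simply cites Definition \ref{def:parrayF}, Lemma \ref{lem:tthindep}, and Definition \ref{def:tvphitphi} for the first assertion and Lemmas \ref{lem:th0thdcoincide} and \ref{lem:vphi1vphid} for the second, leaving implicit exactly the two verifications you spell out (that $\tvth_i$ equals the sum $\sum_{\ell=0}^{i-1}(\tth_\ell-\tth_{d-\ell})/(\tth_0-\tth_d)$, and that $\tphi_1=\phi_1$, $\tvphi_1=\vphi_1$ via \eqref{eq:Omega2}). Your computation of $\tth_\ell-\tth_{d-\ell}$ checks out, so the argument is complete.
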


\begin{proof}
The first assertion follows from Definition \ref{def:parrayF},
Lemma \ref{lem:tthindep}, and
Definition \ref{def:tvphitphi}.
The second assertion follows from Lemmas \ref{lem:th0thdcoincide} and \ref{lem:vphi1vphid}.
\end{proof}

\section{Proof of Theorem \ref{thm:main3}}
\label{sec:proofmain3}

In this section we prove Theorem \ref{thm:main3}.
Fix an integer $d \geq 3$.
Assume $\text{Char}(\F)\neq 2$ and $d$ does not vanish in $\F$.
Recall the polynomial $f_\omega(x)$ from Definition \ref{def:f}.

By Corollary \ref{cor:Omegaexists} there exists $\omega \in \F$ 
such that
\begin{itemize}
\item
$\omega \neq 1$,  $\;\omega \neq 2$;
\item
the equation $f_\omega(x)=0$ has no repeated roots;
\item
the equation $f_\omega(x)=0$ has no roots in $\Gamma$,
where $\Gamma$ is from Lemma \ref{lem:choiceOmega}.
\end{itemize}
Fix $\omega \in \F$ that satisfies the above conditions.

By Lemma \ref{lem:choiceq} there are up to inverse precisely $\lfloor (d-1)/2 \rfloor$
nonzero roots of $f_\omega(x)=0$ other than $\pm 1$.
For such a root $q$ and for $\zeta\in \F$,
we construct a sequence $\tilde{p}(q,\zeta)$ as follows.
Note that $q^i \neq 1$ for $1 \leq i \leq d$ by the construction.
Define scalars
\begin{align*}
 \th_0 &= 0,  \qquad
 \th_d = 1,  \qquad \quad
  \th^*_0 = 0, \qquad
  \th^*_d = 1,
\\
 \vphi_1 &= 1,  \qquad
 \vphi_d = -1,  \qquad
 \phi_1 = \zeta, \qquad
 \phi_d = \omega - \zeta.
\end{align*}
Observe that
\[
   \omega = \frac{\phi_1 + \phi_d - \vphi_1 - \vphi_d}{(\th_0 - \th_d)(\th^*_0-\th^*_d)}.
\]
For $0 \leq i \leq d$ define scalars $\tth_i=\tth_i(q, \zeta)$ and $\tth^*_i = \tth^*_i(q,\zeta)$
as in Definition \ref{def:tth}.
For $1 \leq i \leq d$ define scalars $\tvphi_i = \tvphi_i (q,\zeta)$ and $\tphi_i = \tphi_i (q,\zeta)$
as in Definition \ref{def:tvphitphi}.
We have constructed a sequence
\[
 \tilde{p}(q,\zeta) 
 = (\{\tth_i(q,\zeta)\}_{i=0}^d, \{\tth^*_i(q,\zeta)\}_{i=0}^d, 
   \{\tvphi_i(q,\zeta)\}_{i=1}^d, \{\tphi_i (q,\zeta)\}_{i=1}^d).
\]
The following two lemmas can be routinely verified.

\begin{lemma}    \label{lem:tthitthj}   \samepage
\ifDRAFT {\rm lem:tthitthj}. \fi
For $0 \leq i,j \leq d$
\[
 \tth_i(q,\zeta) - \tth_j(q,\zeta) =
  \frac{(q^j-q^i) Z_1(q,\zeta)}
         {(q-1)(q^{d-1}-1)(q^d-1)},
\]
where
\[
  Z_1(q,\zeta) = \zeta (q^d-1)(q^{d-i-j}-1) + q(q^{d-1}-1)(q^{d-i-j-1}-1).
\]
\end{lemma}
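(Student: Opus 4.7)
The plan is to verify the identity by direct substitution and algebraic simplification, since the statement is purely computational. The formula in Definition \ref{def:tth} for $\tth_i$ specializes nicely under the choice $\th_0 = \th^*_0 = 0$, $\th_d = \th^*_d = 1$, $\phi_1 = \zeta$, $\vphi_d = -1$, so that $\phi_1 - \vphi_d = \zeta+1$ and $\th^*_0 - \th^*_d = -1$. Under this specialization one obtains
\[
 \tth_i(q,\zeta) = \frac{(q^i-1)(q^{2d-i-1}-1)}{(q^{d-1}-1)(q^d-1)} - \frac{(q^i-1)(q^{d-i}-1)(\zeta+1)}{(q-1)(q^{d-1}-1)},
\]
and $\tth_j(q,\zeta)$ is given by the same expression with $i$ replaced by $j$.

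Next, I would form the difference $\tth_i - \tth_j$ and separately simplify the two pieces. The main algebraic step is the factorization
\[
 (q^i-1)(q^{2d-i-1}-1) - (q^j-1)(q^{2d-j-1}-1) = (q^j - q^i)(1 - q^{2d-i-j-1}),
\]
together with the parallel identity
\[
 (q^i-1)(q^{d-i}-1) - (q^j-1)(q^{d-j}-1) = (q^j - q^i)(1 - q^{d-i-j}).
\]
Both identities are immediate upon expanding the products and noting that the purely $i$-dependent and $j$-dependent mixed terms cancel in pairs. Factoring $(q^j-q^i)$ from $\tth_i - \tth_j$ and bringing the two remaining terms over the common denominator $(q-1)(q^{d-1}-1)(q^d-1)$ yields
\[
 \tth_i - \tth_j = \frac{(q^j - q^i)}{(q-1)(q^{d-1}-1)(q^d-1)}\Bigl[(q-1)(1 - q^{2d-i-j-1}) - (\zeta+1)(q^d-1)(1-q^{d-i-j})\Bigr].
\]

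Finally, I would check that the bracketed numerator equals $Z_1(q,\zeta)$. Since $Z_1$ depends linearly on $\zeta$, it suffices to match the coefficient of $\zeta$ and the constant term separately. The coefficient of $\zeta$ on both sides is $-(q^d-1)(1-q^{d-i-j}) = \zeta$-coefficient of $Z_1$. For the remaining part, setting $k = d-i-j$ and expanding
\[
 (q-1)(1-q^{d+k-1}) - (q^d-1)(1-q^k) = q + q^{d+k-1} - q^d - q^k
\]
yields exactly $q(q^{d-1}-1)(q^{d-i-j-1}-1)$, which is the $\zeta$-free part of $Z_1$. No step should present any real obstacle; the only mild subtlety is keeping track of signs coming from $\vphi_d = -1$ and $\th^*_0 - \th^*_d = -1$ when specializing Definition \ref{def:tth}.
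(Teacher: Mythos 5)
Your verification is correct: the specialization of Definition \ref{def:tth} under $\th_0=\th^*_0=0$, $\th_d=\th^*_d=1$, $\phi_1=\zeta$, $\vphi_d=-1$ is right, both factorization identities check out, and the bracketed numerator does reduce to $Z_1(q,\zeta)$ term by term in $\zeta$. The paper offers no proof beyond the remark that the lemma ``can be routinely verified,'' and your direct computation is precisely that routine verification.
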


\begin{lemma}    \label{lem:tthsitthsj}   \samepage
\ifDRAFT {\rm lem:tthsitthsj}. \fi
For $0 \leq i,j \leq d$
\[
 \tth^*_i(q,\zeta) - \tth^*_j(q,\zeta) =
   \frac{(q^i-q^j) Z_2(q,\zeta)}
          {(q-1)(q^{d-1}-1)(q^d-1)},
\]
where
\[
 Z_2(q,\zeta) = \zeta (q^d-1)(q^{d-i-j}-1)-(q^{d-1}+1)(q^{d-i-j+1}+1)+2q^{d-i-j}(q^{i+j}+1).
\]
\end{lemma}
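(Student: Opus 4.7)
The plan is to verify this identity by direct algebraic expansion of the definition of $\tth^*_i(q,\zeta)$, in a manner entirely parallel to the preceding Lemma \ref{lem:tthitthj}. The only genuinely new feature is that, in the specialization of Section \ref{sec:proofmain3}, the quantity $\phi_d - \vphi_d$ evaluates to $\omega - \zeta + 1$, so the scalar $\omega$ enters the formula for $\tth^*_i$ in a way that it does not enter the formula for $\tth_i$; this $\omega$ must ultimately be eliminated using the relation $\omega(q^d-1) = (q-1)(q^{d-1}+1)$ coming from \eqref{eq:Omega2}.

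Concretely, I would first substitute $\th^*_0 = 0$, $\th^*_d = 1$, $\th_0 - \th_d = -1$, $\phi_d - \vphi_d = \omega - \zeta + 1$ into Definition \ref{def:tth} to obtain
\[
\tth^*_i \;=\; \frac{(q^i-1)(q^{2d-i-1}-1)}{(q^{d-1}-1)(q^d-1)} \;-\; \frac{(q^i-1)(q^{d-i}-1)(\omega-\zeta+1)}{(q-1)(q^{d-1}-1)}.
\]
Expanding the two products in the numerators exhibits $\tth^*_i$ as a linear combination of $1$, $q^i$, $q^{d-i}$ and $q^{2d-i-1}$ with coefficients independent of $i$. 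In the difference $\tth^*_i - \tth^*_j$, the constant terms cancel, and the remaining differences combine via $q^{d-i}-q^{d-j} = -q^{d-i-j}(q^i-q^j)$ and $q^{2d-i-1}-q^{2d-j-1} = -q^{2d-i-j-1}(q^i-q^j)$ to produce a common factor of $(q^i-q^j)$, with the complementary factor being a linear combination of $q^{d-i-j}-1$ and $q^{2d-i-j-1}-1$.

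The remaining step is to place everything over the common denominator $(q-1)(q^{d-1}-1)(q^d-1)$, substitute $(\omega-\zeta+1)(q^d-1) = (q-1)(q^{d-1}+1) + (1-\zeta)(q^d-1)$ to clear $\omega$, and verify that the resulting numerator equals $Z_2(q,\zeta)$. The main (only) obstacle is the bookkeeping of this last polynomial identity: one must track the six distinct monomial types $q^{2d-i-j}$, $q^d$, $q^{d-i-j+1}$, $q^{d-1}$, $q^{d-i-j}$, $1$ in $\zeta$ and in the $\zeta$-free part and check cancellation in each. This is tedious but mechanical, structurally identical to the verification of Lemma \ref{lem:tthitthj}, which is why the authors relegate it to a routine check.
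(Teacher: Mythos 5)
Your proposal is correct and is exactly the ``routine verification'' the paper has in mind: direct expansion of Definition \ref{def:tth} with the specialized end-parameters, extraction of the common factor $(q^i-q^j)$, and elimination of $\omega$ via $\omega(q^d-1)=(q-1)(q^{d-1}+1)$ (i.e.\ $f_\omega(q)=0$). You correctly identified the one non-obvious point --- that $\omega$ enters through $\phi_d-\vphi_d=\omega-\zeta+1$ and must be cleared using \eqref{eq:Omega2} before the numerator collapses to $Z_2(q,\zeta)$ --- and the final polynomial identity does check out.
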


\begin{lemma}   \label{lem:tthfinite}   \samepage
\ifDRAFT {\rm lem:tthfinite}. \fi
For $0 \leq i < j \leq d$ the following hold:
\begin{itemize}
\item[\rm (i)]
$\tth_i(q,\zeta) = \tth_j (q,\zeta)$ holds for only one value of $\zeta$.
\item[\rm (ii)]
$\tth^*_i (q,\zeta) = \tth^*_j (q,\zeta)$ holds for only one value of $\zeta$.
\end{itemize}
\end{lemma}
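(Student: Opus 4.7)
The plan is to view both defining equations as polynomial equations in $\zeta$ of degree at most one, via Lemmas~\ref{lem:tthitthj} and \ref{lem:tthsitthsj}. The conclusion will then follow immediately from the fact that a nonzero polynomial of degree at most one has at most one root, so everything reduces to verifying that these polynomials are not identically zero.

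For part~(i), the equation $\tth_i(q,\zeta) = \tth_j(q,\zeta)$ reduces, via Lemma~\ref{lem:tthitthj}, to $Z_1(q,\zeta) = 0$. Indeed, the prefactor $(q^j - q^i)/((q-1)(q^{d-1}-1)(q^d-1))$ is nonzero: the denominator is nonzero because $q \neq \pm 1$ and $q \notin \Gamma$ force $q^r \neq 1$ for $1 \leq r \leq d$, and $q^j - q^i = q^i(q^{j-i}-1)$ is nonzero since $1 \leq j - i \leq d$. Now regard $Z_1(q,\zeta)$ as a polynomial in $\zeta$; its leading coefficient is $(q^d-1)(q^{d-i-j}-1)$. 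Since $0 \leq i < j \leq d$ forces $|d-i-j| \leq d-1$, and $q^r \neq 1$ for $1 \leq r \leq d$, this coefficient vanishes if and only if $i+j = d$. In the generic case $i+j \neq d$, the polynomial is nonzero, yielding at most one root. In the degenerate case $i+j = d$, substituting $d-i-j = 0$ collapses $Z_1$ to the constant $q(q^{d-1}-1)(q^{-1}-1)$, which is nonzero by the conditions on $q$; so the equation has no solution at all. Either way, at most one $\zeta$ satisfies $\tth_i(q,\zeta) = \tth_j(q,\zeta)$.

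Part~(ii) is handled by the same strategy using Lemma~\ref{lem:tthsitthsj}. The coefficient of $\zeta$ in $Z_2$ coincides with that in $Z_1$, so the argument in the generic case $i+j \neq d$ is identical. Only the degenerate case $i+j = d$ requires separate treatment: a brief algebraic simplification reduces the constant term of $Z_2$ in this case to $(q-1)(q^{d-1}-1)$, which is nonzero by the standing conditions on $q$.

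The main obstacle, and really the only one, is the bookkeeping in the degenerate case: one must verify that when linearity in $\zeta$ fails for $Z_1$ or $Z_2$, the three summands comprising the constant term cannot accidentally cancel. Once this is checked, the proof is complete.
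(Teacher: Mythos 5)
Your proposal is correct and follows essentially the same route as the paper: reduce each equality to $Z_1(q,\zeta)=0$ or $Z_2(q,\zeta)=0$ via Lemmas~\ref{lem:tthitthj} and \ref{lem:tthsitthsj}, note the $\zeta$-coefficient $(q^d-1)(q^{d-i-j}-1)$ vanishes exactly when $i+j=d$, and check that in that degenerate case the expression collapses to a nonzero constant ($(1-q)(q^{d-1}-1)$ for $Z_1$, $(q-1)(q^{d-1}-1)$ for $Z_2$). Your explicit verification of the nonzero prefactor and of the $Z_2$ degenerate case merely fills in details the paper leaves as ``similar to (i).''
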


\begin{proof}
(i):
Observe by Lemma \ref{lem:tthitthj} 
that $\tth_i(q,\zeta) = \tth_j(q,\zeta)$ if and only if $Z_1(q,\zeta)=0$.
First assume $q^{d-i-j}-1 = 0$.
Then
\[
   Z_1 (q,\zeta) = (1-q)(q^{d-1}-1) \neq 0.
\]
Next assume $q^{d-i-j}-1 \neq 0$.
Then $Z_1 (q,\zeta)$ is a polynomial in $\zeta$ with degree $1$.
So $Z_1 (q,\zeta) =0$ holds for only one value of $\zeta$.
The result follows.

(ii):
Similar to the proof of (i).
\end{proof}

The following two lemmas can be routinely verified.

\begin{lemma}    \label{lem:tvphi}    \samepage
\ifDRAFT {\rm lem:tvphi}. \fi
For $1 \leq i \leq d$
\[
\tvphi_i(q,\zeta) = 
  - \frac{(q^i-1)(q^{d-i+1}-1) \, Z_3(q,\zeta)}
           {(q-1)^2 (q^{d-1}-1)^2 (q^d-1)^2},
\]
where
\begin{align*}
  Z_3(q,\zeta) =& \;\zeta^2\, (q^d-1)^2(q^{i-1}-1)(q^{d-i}-1)
\\
    & \; - \zeta (q-1)(q^d-1)(q^{d-1}+1)(q^{i-1}-1)(q^{d-i}-1)
\\
   & \; - (q^{d-1}-1)(q^i-1) \big( (q^{d-1}+1)(q^{d-i+1}+1)-2 q^{d-i}(q^i+1) \big).
\end{align*}
\end{lemma}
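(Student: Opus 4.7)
The plan is to compute $\tvphi_i(q,\zeta)$ by direct substitution into Definition \ref{def:tvphitphi} using the specified end-parameters. Since $\phi_1 = \zeta$, $\tth^*_0 = \th^*_0 = 0$, and $\tth_d = \th_d = 1$, the formula becomes $\tvphi_i = \zeta\, \tvth_i + \tth^*_i(\tth_{i-1} - 1)$, where $\tvth_i = (q^i-1)(q^{d-i+1}-1)/((q-1)(q^d-1))$ by Definition \ref{def:vth}. Closed-form expressions for $\tth^*_i$ and $\tth_{i-1} - 1$ are obtained from Lemmas \ref{lem:tthsitthsj} and \ref{lem:tthitthj} respectively: $\tth^*_i = \tth^*_i - \tth^*_0$ comes from Lemma \ref{lem:tthsitthsj} with $j=0$, while $\tth_{i-1} - 1 = \tth_{i-1} - \tth_d$ comes from Lemma \ref{lem:tthitthj} with indices $(i-1, d)$.

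Evaluating the second of these requires clearing the negative powers of $q$ that appear; using $q^{1-i}-1 = -(q^{i-1}-1)/q^{i-1}$, $q^{-i}-1 = -(q^i-1)/q^i$, and $q^d - q^{i-1} = q^{i-1}(q^{d-i+1}-1)$, one finds
\[
 \tth_{i-1} - 1 = -\frac{(q^{d-i+1}-1)\bigl[\zeta(q^d-1)(q^{i-1}-1) + (q^{d-1}-1)(q^i-1)\bigr]}{(q-1)(q^{d-1}-1)(q^d-1)}.
\]
Combining the three pieces over the common denominator $(q-1)^2(q^{d-1}-1)^2(q^d-1)^2$, the factor $(q^i-1)(q^{d-i+1}-1)$ pulls out of both $\zeta\,\tvth_i$ and $\tth^*_i(\tth_{i-1}-1)$, leaving a polynomial in $\zeta$ of degree $2$ to identify with $-Z_3(q,\zeta)$.

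The $\zeta^2$ and $\zeta^0$ coefficients match $Z_3$ immediately, coming from the product of the $Z_2$ factor in $\tth^*_i$ with the bracket in $\tth_{i-1}-1$. The only nontrivial check is the $\zeta^1$ coefficient, which requires the identity
\begin{align*}
 & -(q-1)(q^{d-1}-1)^2 + (q^{d-1}-1)(q^i-1)(q^{d-i}-1) \\
 &\quad + (q^{i-1}-1)\bigl[2q^{d-i}(q^i+1) - (q^{d-1}+1)(q^{d-i+1}+1)\bigr] \\
 &\qquad = -(q-1)(q^{d-1}+1)(q^{i-1}-1)(q^{d-i}-1),
\end{align*}
which follows by direct expansion. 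The main obstacle is purely bookkeeping: tracking signs and denominators when applying the two lemmas to differences involving the index $d$, and consolidating the $\zeta$-coefficient after clearing negative powers of $q$. No new ideas enter beyond careful polynomial manipulation.
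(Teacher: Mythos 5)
Your computation is correct: substituting the specified end-parameters into Definition \ref{def:tvphitphi}, expressing $\tth^*_i-\tth^*_0$ and $\tth_{i-1}-\tth_d$ via Lemmas \ref{lem:tthsitthsj} and \ref{lem:tthitthj}, and matching coefficients of $\zeta$ (the stated $\zeta^1$ identity does check out by expansion) is exactly the ``routine verification'' the paper invokes without spelling out. No difference in approach; the proposal is fine.
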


\begin{lemma}    \label{lem:tphi}    \samepage
\ifDRAFT {\rm lem:tphi}. \fi
For $1 \leq i \leq d$
\[
\tphi_i (q,\zeta) =
 - \frac{(q^i-1)(q^{d-i+1}-1) \, Z_4(q,\zeta)}
           {(q-1)^2 (q^{d-1}-1)^2 (q^d-1)^2},
\]
where
\begin{align*}
 Z_4(q,\zeta) =& \; \zeta^2 (q^d-1)^2(q^{i-1}-1)(q^{d-i}-1)
\\
   & \; - \zeta (q-1) (q^d-1) \big( (q^{d-i}-1)(q^{d+i-2}-1) - q^{d-i}(q^{i-1}-1)^2 \big)
\\
   & \; - (q^{d-1}-1)(q^{i-1}-1) \big( (q^{d-1}+1)(q^{d-i+2}+1) - 2 q^{d-i+1} (q^{i-1}+1) \big).
\end{align*}
\end{lemma}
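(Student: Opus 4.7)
The plan is to substitute the definition of $\tphi_i$ from Definition \ref{def:tvphitphi} and reduce it to the claimed form by direct computation, in the same spirit as Lemma \ref{lem:tvphi}. Since the construction of $\tilde p(q,\zeta)$ sets $\vphi_1 = 1$, the definition reads
\[
\tphi_i(q,\zeta) = \tvth_i + (\tth^*_i - \tth^*_0)(\tth_{d-i+1}-\tth_0),
\]
and every ingredient on the right is already available in closed form: $\tvth_i$ from Definition \ref{def:vth}; $\tth^*_i - \tth^*_0$ from Lemma \ref{lem:tthsitthsj} with $(i,j)\mapsto(i,0)$, which specializes $Z_2$ to $\zeta(q^d-1)(q^{d-i}-1) - (q^{d-1}+1)(q^{d-i+1}+1) + 2q^{d-i}(q^i+1)$; and $\tth_{d-i+1}-\tth_0$ from Lemma \ref{lem:tthitthj} with $(i,j)\mapsto(d-i+1,0)$, which specializes $Z_1$ to $\zeta(q^d-1)(q^{i-1}-1) + q(q^{d-1}-1)(q^{i-2}-1)$.

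When the two differences are multiplied, the $(q^j-q^i)$-type prefactors of the two lemmas supply the numerator factor $-(q^i-1)(q^{d-i+1}-1)$, while the denominators combine to $(q-1)^2(q^{d-1}-1)^2(q^d-1)^2$. After placing $\tvth_i$ over this common denominator (multiplying top and bottom by $(q-1)(q^{d-1}-1)^2(q^d-1)$), the claim reduces to the single polynomial identity
\[
Z_4(q,\zeta) \;=\; Z_2(q,\zeta)\cdot Z_1(q,\zeta) \;-\; (q-1)(q^{d-1}-1)^2(q^d-1),
\]
with $Z_2$, $Z_1$ evaluated as above, which is to be verified coefficient by coefficient in $\zeta$.

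The $\zeta^2$ coefficient comes only from the product $Z_2\cdot Z_1$ and is immediately $(q^d-1)^2(q^{i-1}-1)(q^{d-i}-1)$, matching the statement. The constant term is the longest but most mechanical piece: the subtracted correction $(q-1)(q^{d-1}-1)^2(q^d-1)$ is precisely what collapses the remainder into the factored form $(q^{d-1}-1)(q^{i-1}-1)\bigl[(q^{d-1}+1)(q^{d-i+2}+1)-2q^{d-i+1}(q^{i-1}+1)\bigr]$. The main obstacle is the $\zeta^1$ coefficient, where one must show that
\[
q(q^{d-1}-1)(q^{d-i}-1)(q^{i-2}-1) + (q^{i-1}-1)\bigl[-(q^{d-1}+1)(q^{d-i+1}+1) + 2q^{d-i}(q^i+1)\bigr]
\]
equals $-(q-1)\bigl[(q^{d-i}-1)(q^{d+i-2}-1) - q^{d-i}(q^{i-1}-1)^2\bigr]$; this is a nontrivial telescoping that amounts to repeated use of the identity $q^a-q^b = q^b(q^{a-b}-1)$ to extract the common factor $(q-1)$. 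Since the parallel Lemma \ref{lem:tvphi} verifies an identity of exactly the same shape (with $\tth_{i-1}-\tth_d$ in place of $\tth_{d-i+1}-\tth_0$ and $\phi_1$ in place of $\vphi_1=1$), no qualitatively new difficulty arises, and the verification is purely mechanical.
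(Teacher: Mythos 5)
Your proposal is correct and follows exactly the route the paper intends: the paper dismisses this lemma with ``can be routinely verified,'' and your verification --- substituting Definition \ref{def:tvphitphi} with $\vphi_1=1$, specializing Lemmas \ref{lem:tthitthj} and \ref{lem:tthsitthsj} at $(d-i+1,0)$ and $(i,0)$, and reducing to the identity $Z_4 = Z_2 Z_1 - (q-1)(q^{d-1}-1)^2(q^d-1)$ checked coefficient-by-coefficient in $\zeta$ --- is precisely that routine computation, and all three coefficient identities do check out.
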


\begin{lemma}   \label{lem:tvphifinite}   \samepage
\ifDRAFT {\rm lem:tvphifinite}. \fi
For $1 \leq i \leq d$ the following hold:
\begin{itemize}
\item[\rm (i)]
$\tvphi_i(q,\zeta) = 0$ holds for at most two values of $\zeta$. 
\item[\rm (ii)]
$\tphi_i (q,\zeta) = 0$ holds for at most two values of $\zeta$. 
\end{itemize}
\end{lemma}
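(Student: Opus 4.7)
The plan is to read off the conclusion directly from the explicit factorizations of $\tvphi_i(q,\zeta)$ and $\tphi_i(q,\zeta)$ furnished by Lemmas \ref{lem:tvphi} and \ref{lem:tphi}. By construction, $q$ is a nonzero root of $f_\omega(x)=0$ with $q^i \neq 1$ for $1 \leq i \leq d$, so the prefactor $(q^i-1)(q^{d-i+1}-1)$ is nonzero. Hence $\tvphi_i(q,\zeta) = 0$ is equivalent to $Z_3(q,\zeta) = 0$, and $\tphi_i(q,\zeta) = 0$ is equivalent to $Z_4(q,\zeta) = 0$. Viewing $Z_3$ and $Z_4$ as polynomials in $\zeta$ of degree at most two, it suffices to verify that they are not identically zero; the conclusion then follows from the fundamental theorem of algebra.

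For the generic range $2 \leq i \leq d-1$, I would observe that the coefficient of $\zeta^2$ in both $Z_3$ and $Z_4$ is $(q^d-1)^2(q^{i-1}-1)(q^{d-i}-1)$, which is nonzero because $q^{i-1}-1$, $q^{d-i}-1$, and $q^d-1$ are all nonzero in this range. Thus $Z_3$ and $Z_4$ have degree exactly two in $\zeta$, and each vanishes for at most two values of $\zeta$.

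For the boundary cases $i=1$ and $i=d$, the coefficient of $\zeta^2$ collapses and one must inspect the lower-order terms. The cleanest way is to appeal to Lemma \ref{lem:vphi1vphid} together with the construction preceding Lemma \ref{lem:tthitthj}, where we have set $\vphi_1 = 1$, $\vphi_d = -1$, $\phi_1 = \zeta$, $\phi_d = \omega - \zeta$. This gives $\tvphi_1(q,\zeta) = 1$, $\tvphi_d(q,\zeta) = -1$, $\tphi_1(q,\zeta) = \zeta$, and $\tphi_d(q,\zeta) = \omega - \zeta$ identically in $\zeta$, so $\tvphi_1, \tvphi_d$ have no zeros and $\tphi_1, \tphi_d$ each have exactly one zero. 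Alternatively, one can substitute $i=1$ and $i=d$ directly into the formulas for $Z_3$ and $Z_4$ and verify that the resulting polynomial in $\zeta$ is a nonzero constant (for $Z_3$) or genuinely linear (for $Z_4$), using that $q \neq \pm 1$ and $q^{d-1} \neq 1$.

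The main obstacle is mostly bookkeeping: one must check that the degree drop at $i \in \{1,d\}$ does not conspire to make $Z_3$ or $Z_4$ the zero polynomial in $\zeta$. Invoking Lemma \ref{lem:vphi1vphid} sidesteps this cleanly, since it reduces the boundary verification to the trivial observation that $\vphi_1, \vphi_d$ are nonzero scalars and $\phi_1, \phi_d$ are nonconstant affine functions of $\zeta$.
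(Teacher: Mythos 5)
Your proof is correct and follows essentially the same route as the paper: reduce $\tvphi_i=0$ and $\tphi_i=0$ to $Z_3=0$ and $Z_4=0$ via the nonzero prefactor, note the common leading coefficient $(q^d-1)^2(q^{i-1}-1)(q^{d-i}-1)$ is nonzero for $2\leq i\leq d-1$, and treat $i\in\{1,d\}$ separately. The paper handles the boundary cases by substituting $i=1,d$ directly into $Z_3$ and $Z_4$ (your stated alternative), whereas your appeal to Lemma \ref{lem:vphi1vphid} is a legitimate and slightly cleaner shortcut since \eqref{eq:Omega2} holds in the Section \ref{sec:proofmain3} setup.
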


\begin{proof}
(i):
Observe by Lemma \ref{lem:tvphi} that $\tvphi_i(q,\zeta) = 0$ if and only if
$Z_3(q,\zeta)=0$.
First assume $i=1$. Then
\[
   Z_3(q,\zeta) = (1-q)(q^{d-1}-1)^2 (q^d-1) \neq 0.
\]
Next assume $i=d$. Then
\[
  Z_3 (q,\zeta) =  (q-1)(q^{d-1}-1)^2 (q^d-1) \neq 0.
\]
Next assume $i \neq 1$ and $i \neq d$.
Then $Z_3 (q,\zeta)$ is a quadratic polynomial in $\zeta$.
So $Z_3 (q,\zeta)=0$ holds for at most two values of $\zeta$.

(ii):
Observe by Lemma \ref{lem:tphi} that $\tphi_i (q,\zeta) = 0$ if and only if
$Z_4(q,\zeta)=0$.
First assume $i=1$. Then
\[
 Z_4 (q,\zeta) = \zeta (1-q) (q^{d-1}-1)^2 (q^d-1).
\]
So $Z_4 (q,\zeta) \neq 0$ unless $\zeta=0$.
Next assume $i=d$. Then
\[
 Z_4 (q,\zeta) = (q-1)(q^{d-1}-1)^2 \big( \zeta (q^d-1) - (q-1)(q^{d-1}+1) \big).
\]
So $Z_4 (q,\zeta)=0$ for only one value of $\zeta$.
Next assume $i \neq 1$ and $i \neq d$.
Then $Z_4(q,\zeta)$ is a quadratic polynomial in $\zeta$.
So $Z_4(q,\zeta)=0$ for at most two values of $\zeta$.
\end{proof}

\begin{proofof}{Theorem \ref{thm:main3}}
By Lemma \ref{lem:choiceq} there are up to inverse precisely $\lfloor (d-1)/2 \rfloor$
nonzero roots of $f_\omega(x)=0$ other than $\pm 1$.
Write these roots as $q_1,q_2,\ldots,q_n$, where $n = \lfloor (d-1)/2 \rfloor$.
For $1 \leq r \leq n$,
by Lemmas \ref{lem:tthfinite} and \ref{lem:tvphifinite}
there are only finitely many $\zeta$ such that $\tilde{p}(q_r,\zeta)$
conflicts \eqref{eq:tthdistinct} or \eqref{eq:tvphinonzero}.
Thus there exists $\zeta \in \F$ such that $\tilde{p}(q_r,\zeta)$ satisfies
both \eqref{eq:tthdistinct} and \eqref{eq:tvphinonzero} for $1 \leq r \leq n$.
Then by Proposition \ref{prop:const},
for $1 \leq r \leq n$ the sequence $\tilde{p}(q_r,\zeta)$ is a parameter array over $\F$
that satisfy
\begin{align*}
  \tth_0 (q_r,\zeta) &= \th_0,   &  
  \tth_d(q_r,\zeta)  &= \th_d,   &
  \tth^*_0(q_r,\zeta)  &= \th^*_0,  &
  \tth^*_d(q_r,\zeta)  &= \th^*_d, 
\\
  \tvphi_1(q_r,\zeta) &= \vphi_1, &
  \tvphi_d(q_r,\zeta) &= \vphi_d, &
  \tphi_1(q_r,\zeta) &= \phi_1, &
  \tphi_d(q_r,\zeta) &= \phi_d.
\end{align*}
Now the result follows by Lemma \ref{lem:classify}.
\end{proofof}

\section{Appendix}
\label{sec:appendix}

Fix an integer $d \geq 3$.
Let \eqref{eq:parray} be a parameter array over $\F$
with fundamental parameter $\beta$.
Pick a nonzero $q \in \F$ such that $\beta = q + q^{-1}$.
In this appendix, we display formulas that represent
$\vphi_i$ and $\phi_i$ in terms of the end-parameters and $q$.

Assume \eqref{eq:parray} has type I.
Then for $1 \leq i \leq d$
{\small
\begin{align*}
\vphi_i =&
 - \frac{q^{i-1}(q^i-1)(q^{d-i}-1)(q^{d-i+1}-1)(q^{2d-i-1}-1)(\th_0-\th_d)(\th^*_0-\th^*_d)}
           {(q^{d-1}-1)^2 (q^d-1)^2}
\\
 & \;\; + \frac{(q^i-1)(q^{d-i+1}-1) \big( (q^{i-1}-1)(q^{2d-i-1}-1)\vphi_d + q^{i-1}(q^{d-i}-1)^2(\phi_1+\phi_d-\vphi_d) \big)}
              {(q-1)(q^{d-1}-1)^2(q^d-1)}
\\
 &\;\; + \frac{(q^{i-1}-1)(q^i-1)(q^{d-i}-1)(q^{d-i+1}-1)(\phi_1-\vphi_d)(\phi_d-\vphi_d)}
             {(q-1)^2(q^{d-1}-1)^2(\th_0-\th_d)(\th^*_0-\th^*_d)},
\\
\phi_i =& \;
  \frac{q^{i-1}(q^i-1)(q^{d-i}-1)(q^{d-i+1}-1)(q^{2d-i-1}-1)(\th_0-\th_d)(\th^*_0-\th^*_d)}
           {(q^{d-1}-1)^2 (q^d-1)^2}
\\
 & \;\; + \frac{(q^i-1)(q^{d-i+1}-1) \big( (q^{i-1}-1)(q^{2d-i-1}-1)\phi_d + q^{i-1}(q^{d-i}-1)^2(\vphi_1+\vphi_d-\phi_d) \big)}
              {(q-1)(q^{d-1}-1)^2(q^d-1)}
\\
 & \;\; - \frac{(q^{i-1}-1)(q^i-1)(q^{d-i}-1)(q^{d-i+1}-1)(\vphi_1-\phi_d)(\vphi_d-\phi_d)}
             {(q-1)^2(q^{d-1}-1)^2(\th_0-\th_d)(\th^*_0-\th^*_d)}.
\end{align*}
}

\bigskip\noindent
Assume \eqref{eq:parray} has type II.
Then for $1 \leq i \leq d$
\begin{align*}
\vphi_i =&
   - \frac{i(d-i)(d-i+1)(2d-i-1)(\th_0-\th_d)(\th^*_0-\th^*_d)}
             {d^2 (d-1)^2}
\\
 & \;\; + \frac{i(d-i+1) \big( (i-1)(2d-i-1)\vphi_d + (d-i)^2(\phi_1+\phi_d-\vphi_d) \big)}
                   {d(d-1)^2}
\\
 &\;\; + \frac{i(i-1)(d-i)(d-i+1)(\phi_1-\vphi_d)(\phi_d-\vphi_d)}
                  {(d-1)^2 (\th_0-\th_d)(\th^*_0-\th^*_d)},
\\
\phi_i =& \;
    \frac{i(d-i)(d-i+1)(2d-i-1)(\th_0-\th_d)(\th^*_0-\th^*_d)}
             {d^2 (d-1)^2}
\\
 & \;\; + \frac{i(d-i+1) \big( (i-1)(2d-i-1)\phi_d + (d-i)^2(\vphi_1+\vphi_d-\phi_d) \big)}
                   {d(d-1)^2}
\\
 &\;\; - \frac{i(i-1)(d-i)(d-i+1)(\vphi_1-\phi_d)(\vphi_d-\phi_d)}
                  {(d-1)^2 (\th_0-\th_d)(\th^*_0-\th^*_d)}.
\end{align*}

\bigskip\noindent
Assume \eqref{eq:parray} has type III$^+$.
Then for $1 \leq i \leq d$
\begin{align*}
\vphi_i &=
 \begin{cases}
  \displaystyle
  \frac{ i \big( d \vphi_d + (d-i)(\th_0-\th_d)(\th^*_0-\th^*_d) \big)}
         {d^2}                             &  \text{ if $i$ is even},
  \\ 
  \displaystyle
  \frac{(d-i+1) \big( d(\phi_1+\phi_d-\vphi_d) - (2d-i-1)(\th_0-\th_d)(\th^*_0-\th^*_d) \big)}
         {d^2}                             & \text{ if $i$ is odd},
  \end{cases}
\\
 \phi_i &=
  \begin{cases}
     \displaystyle
    \frac{i \big( d \phi_d - (d-i)(\th_0-\th_d)(\th^*_0-\th^*_d) \big)}
           {d^2}                           & \text{ if $i$ is even},
    \\
    \displaystyle
    \frac{(d-i+1) \big( d(\vphi_1+\vphi_d-\phi_d) + (2d-i-1)(\th_0-\th_d)(\th^*_0-\th^*_d) \big) }
           {d^2}                            & \text{ if $i$ is odd}.
   \end{cases}
\end{align*}

\bigskip\noindent
Assume \eqref{eq:parray} has type III$^-$.
Then for $1 \leq i \leq d$ the following hold.
\begin{align*}
\intertext{If $i$ is even,}
\vphi_i =& \; 
  \frac{i(d-i+1) \big( \phi_1-\vphi_d - (\th_0-\th_d)(\th^*_0-\th^*_d) \big)
                    \big( \phi_d - \vphi_d -  (\th_0-\th_d)(\th^*_0-\th^*_d) \big)}
         {(d-1)^2 (\th_0-\th_d)(\th^*_0 - \th^*_d)},
\\
\intertext{and if $i$ is odd,}
 \vphi_i =& \; 
  - \frac{(d-i)(2d-i-1)(\th_0-\th_d)(\th^*_0-\th^*_d)}
            {(d-1)^2}
\\
  & \;\; + \frac{(i-1)(2d-i-1)\vphi_d + (d-i)^2 (\phi_1+\phi_d-\vphi_d)}
                    {(d-1)^2}
\\
  & \;\; + \frac{(i-1)(d-i)(\phi_1-\vphi_d)(\phi_d-\vphi_d)}
                    {(d-1)^2 (\th_0-\th_d)(\th^*_0-\th^*_d)}.
\\
\intertext{If $i$ is even,}
  \phi_i =& \; 
   - \frac{i(d-i+1) \big( \vphi_1-\phi_d + (\th_0-\th_d)(\th^*_0-\th^*_d) \big)
                    \big( \vphi_d - \phi_d +  (\th_0-\th_d)(\th^*_0-\th^*_d) \big)}
             {(d-1)^2 (\th_0-\th_d)(\th^*_0 - \th^*_d)},
\\
\intertext{and if $i$ is odd,}
 \phi_i =& \; 
    \frac{(d-i)(2d-i-1)(\th_0-\th_d)(\th^*_0-\th^*_d)}
            {(d-1)^2}
\\
  & \;\; + \frac{(i-1)(2d-i-1)\phi_d + (d-i)^2 (\vphi_1+\vphi_d-\phi_d)}
                    {(d-1)^2}
\\
  & \;\; - \frac{(i-1)(d-i)(\vphi_1-\phi_d)(\vphi_d-\phi_d)}
                    {(d-1)^2 (\th_0-\th_d)(\th^*_0-\th^*_d)}.
\end{align*}

\bigskip\noindent
Assume \eqref{eq:parray} has type IV.
Then
\begin{align*}
 \vphi_2 &= \frac{ \big( \phi_1 - \vphi_1 + (\th_0-\th_3)(\th^*_0-\th^*_3) \big)
                          \big( \phi_1 - \vphi_3 + (\th_0-\th_3)(\th^*_0-\th^*_3) \big)}
                       { (\th_0-\th_3)(\th^*_0-\th^*_3)},
\\
 \phi_2 &= \frac{ \big( \vphi_1 - \phi_1 + (\th_0-\th_3)(\th^*_0-\th^*_3) \big)
                          \big( \vphi_1 - \phi_3 + (\th_0-\th_3)(\th^*_0-\th^*_3) \big)}
                       { (\th_0-\th_3)(\th^*_0-\th^*_3)}.
\end{align*}

\section{Acknowledgments}

The author would like to thank Paul Terwilliger for giving this paper a close reading
and offering many valuable suggestions.

\bigskip

{

\small

}

\bigskip\bigskip\noindent
Kazumasa Nomura\\
Tokyo Medical and Dental University\\
Kohnodai, Ichikawa, 272-0827 Japan\\
email: knomura@pop11.odn.ne.jp

\medskip\noindent
{\small
{\bf Keywords.} Leonard pair, tridiagonal pair, tridiagonal matrix.
\\
\noindent
{\bf 2010 Mathematics Subject Classification.} 05E35, 05E30, 33C45, 33D45
}


\begin{thebibliography}{10}

\bibitem{Lang}
S. Lang, Algebra,
Graduate Texts in Math., 211, 
Springer, 2002.

\bibitem{NT:formula}
K. Nomura, P. Terwilliger,
Some trace formulae involving the split sequences of a Leonard pair,
Linear Algebra Appl. 413 (2006) 189--201;
{\tt arXiv:math/0508407}.

\bibitem{NT:balanced}
K. Nomura, P. Terwilliger,
Balanced Leonard pairs,
Linear Algebra Appl. 420 (2007) 51-69;
{\tt arXiv:math/0506219}.

\bibitem{NT:affine}
K. Nomura, P. Terwilliger,
Affine transformations of a Leonard pair,
Electron. J. of Linear Algebra 16 (2007) 389-418;
{\tt arXiv:math/0611783}.

\bibitem{T:Leonard}
P. Terwilliger,
Two linear transformations each tridiagonal with respect to an
eigenbasis of the other,
Linear Algebra Appl. 330 (2001) 149--203;
{\tt arXiv:math/0406555}.

\bibitem{T:24points}
P.\ Terwilliger,
Leonard pairs from 24 points of view,
Rocky Mountain J.\ Math. 32 (2002)  827--888;
{\tt arXiv:math/0406577}.

\bibitem{T:array}
P.~Terwilliger,
Two linear transformations each tridiagonal with respect to an eigenbasis 
of the other; comments on the parameter array,
Des. Codes Cryptogr.  34 (2005) 307--332;
{\tt arXiv:math/0306291}.

\bibitem{T:survey}
P. Terwilliger,
An algebraic approach to the Askey scheme of orthogonal polynomials,
Orthogonal polynomials and special functions, 
Lecture Notes in Math., 1883, 
Springer, Berlin, 2006, pp. 255--330;
{\tt arXiv:math/0408390}. 
\end{thebibliography}
\end{document}